\newcommand{\maybebf}{}
    \renewcommand{\sectionmark}[1]{\markboth{##1}{}}
\renewcommand{\sectionmark}[1]{%
  \markboth{%
    \ifnum\value{section}>0
      \maybebf{\thesection.}\space
    \fi
    #1%
  }{%
    \ifnum\value{subsection}=0
            \thesection. #1
        \fi%
  }%
}
\renewcommand{\subsectionmark}[1]{%
    \markright{%
        \ifnum\value{subsection}>0
            \thesubsection. #1
        \fi%
    }%
}
\definecolor{blue1}{RGB}{41,41,255}
\definecolor{blue2}{RGB}{38,143,235}
\definecolor{blue3}{RGB}{34,214,202}
\theoremstyle{plain}
\newtheorem{thm}{Theorem}[section]
\newtheorem{prop}[thm]{Proposition}
\newtheorem{lemma}[thm]{Lemma}
\newtheorem{conj}[thm]{Conjecture}
\newtheorem*{thm*}{Theorem}
\newtheorem*{prop*}{Proposition}
\newtheorem*{conj*}{Conjecture}
\newtheorem*{cor*}{Corollary}
\theoremstyle{definition}
\newtheorem{defin}[thm]{Definition}
\newtheorem*{defin*}{Definition}
\theoremstyle{remark}
\newtheorem{remark}[thm]{Remark}
\newcommand{\defeq}{\coloneqq}
\newcommand{\neu}{\mathcal N}
\newcommand{\poly}{\mathcal P}
\newcommand{\polym}{\mathfrak P}
\newcommand{\linn}{\mathcal D}
\newcommand{\inj}{\hookrightarrow}
\newcommand{\surj}{\twoheadrightarrow}
\newcommand{\midx}[1]{{{#1}}}
\newcommand{\heqsep}{\hspace{0.8em}}
\newcommand{\id}{\mathrm{id}}
\newcommand{\ZZ}{\mathbb Z}
\newcommand{\QQ}{\mathbb Q}
\newcommand{\RR}{\mathbb R}
\newcommand{\CC}{\mathbb C}
\newcommand{\til}{\widetilde}
\newcommand{\blt}{*}
\DeclarePairedDelimiter\abs{\lvert}{\rvert}%
\DeclarePairedDelimiter\norm{\lVert}{\rVert}%
\DeclarePairedDelimiter\floor{\lfloor}{\rfloor}
\let\oldabs\abs
\def\abs{\@ifstar{\oldabs}{\oldabs*}}
\let\oldnorm\norm
\def\norm{\@ifstar{\oldnorm}{\oldnorm*}}
\DeclareMathOperator{\image}{im}
\DeclareMathOperator{\coker}{coker}
\DeclareMathOperator{\Hom}{Hom}
\DeclareMathOperator{\ab}{ab}
\DeclareMathOperator{\rk}{rk}
\DeclareMathOperator{\pdim}{pdim}
\DeclareMathOperator{\tr}{tr}
\DeclareMathOperator{\ord}{ord}
\newcommand{\fbseries}{
\unskip\setBold\aftergroup\unsetBold\aftergroup\ignorespaces}
\newcommand{\setBoldness}[1]{\def\fake@bold{#1}}
\tikzset{
    rots/.style={anchor=south, rotate=90, inner sep=.5mm}
}
\newcommand\restr[2]{{
  \left.\kern-\nulldelimiterspace 
  #1 
  \vphantom{\big|} 
  \right|_{#2} 
  }}
\author{Jacopo G. Chen\thanks{Scuola Normale Superiore, Pisa, Italy. Email: \href{mailto:jacopo.chen@sns.it}{\texttt{jacopo.chen@sns.it}}}}
\title{Computing the twisted $L^2$-Euler characteristic}
\begin{document}

\DTMsetdatestyle{mydateformat}
\date{}
\maketitle

\begingroup
\centering\small
\textbf{Abstract}\par\smallskip
\begin{minipage}{\dimexpr\paperwidth-9.9cm}
We present an algorithm that computes Friedl and Lück's \emph{twisted $L^2$-Euler characteristic} for a suitable CW complex, employing Oki's \emph{matrix expansion algorithm} to indirectly evaluate the Dieudonné determinant. The algorithm needs to run for an extremely long time to certify its outputs, but a truncated, human-assisted version produces very good results in many cases, such as hyperbolic link complements, closed census $3$-manifolds, free-by-cyclic groups, and higher-dimensional examples, such as the fiber of the Ratcliffe-Tschantz manifold.
\end{minipage}
\par\endgroup

\section*{Introduction}

The twisted $L^2$-Euler characteristic is an $L^2$-invariant for a CW complex $X$ introduced by Friedl and Lück in~\cite{l2thur}. It consists of a homogeneous integer-valued function on the first cohomology $H^1(X; \mathbb Z)$ and equals, up to sign, the celebrated \emph{Thurston norm} on a large class of $3$-manifolds, while also exposing the fiber's Euler characteristic if $X$ fibers over $S^1$. For this reason, it is a natural candidate for a Thurston norm-like invariant in the study of odd-dimensional manifolds.

This invariant is strongly related to the \emph{von Neumann algebra} of the fundamental group $G$ of $X$, a noncommutative ring which generally lacks an explicit, finite description for its elements. By assuming the Atiyah conjecture for $G$, we can recast the twisted $L^2$-Euler characteristic as a homology invariant of a certain complex over the \emph{Linnell skew field} of $G$. Even so, this object proves to be quite difficult to handle: the theory of modules over skew fields, while very similar to the usual commutative linear algebra, replaces the determinant of a matrix with the computationally elusive \emph{Dieudonné determinant}.

In this paper, we apply Oki's \emph{matrix expansion algorithm}~\cite{oki} to approximate different valuations of this determinant without computing it explicitly. This ultimately translates to computing twisted $L^2$-Euler characteristic at different first cohomology classes, leading to our main result:

\begin{thm*}
    There exists an algorithm that, given a finite $L^2$-acyclic CW complex $M$, such that its fundamental group $G$ is residually finite and satisfies the Atiyah conjecture, and a character $\phi: G \to \ZZ$, computes the twisted $L^2$-Euler characteristic $\chi^{(2)}(\til M; \phi)$.
\end{thm*}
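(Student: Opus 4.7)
The plan is to reformulate the twisted $L^2$-Euler characteristic as a computation of $t$-degrees of Dieudonné determinants over a twisted Laurent polynomial ring, apply Oki's matrix expansion algorithm, and circumvent the intractability of the Linnell skew field via residually-finite approximations.

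Assuming $\phi$ surjective, set $K = \ker \phi$; the induced short exact sequence $1 \to K \to G \to \ZZ \to 1$ splits, so $\ZZ G$ acquires the structure of a twisted Laurent polynomial ring $\ZZ K[t^{\pm}; \sigma]$. Under the Atiyah conjecture, the Linnell skew field $\linn(G)$ is the Ore localization of $\linn(K)[t^{\pm}; \sigma]$. The $L^2$-acyclicity of $M$ makes $C_*(\til M) \otimes_{\ZZ G} \linn(G)$ acyclic, and a standard unpacking of the Friedl--Lück definition expresses $\chi^{(2)}(\til M; \phi)$ as an alternating sum of $t$-degrees of Dieudonné determinants of the cellular boundary matrices, regarded as matrices over $\linn(K)[t^{\pm}; \sigma]$. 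Step one of the algorithm is therefore to extract these matrices from the cell structure of $M$ and the $G$-action on $\til M$.

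Step two is to invoke Oki's matrix expansion algorithm, which computes precisely such degrees provided arithmetic in the coefficient skew field is effective. Since $\linn(K)$ is not a priori computable, I would replace it with a sequence of finite-dimensional approximations supplied by residual finiteness: a nested chain of finite-index normal subgroups $N_i \triangleleft K$ with $\bigcap_i N_i = \{1\}$ yields regular representations $\ZZ K \to M_{[K:N_i]}(\ZZ)$, turning matrices over $\linn(K)[t^{\pm}; \sigma]$ into matrices over $M_{[K:N_i]}(\QQ)[t^{\pm}]$, on which Oki's algorithm runs effectively (after further decomposing the semisimple quotient into matrix algebras over number fields). Lück-approximation-style reasoning then guarantees that the degrees computed over the finite quotients converge to the true Dieudonné $t$-degree.

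The main obstacle is \emph{certification}: the value computed at a single finite quotient is in general only an approximation, and the algorithm must decide when the correct limit has been reached. To guarantee termination with a verified answer, I would pair the quotient-based approximations with complementary bounds derived from direct inspection of leading and trailing monomials in the Dieudonné determinant --- residually finite quotients suffice to detect when such monomials are nonzero in $\linn(K)$ --- and halt once the two sides of the estimate coincide. Correctness and termination then follow from the convergence above and the integrality of Dieudonné degrees; the rate at which the bounds match is not effectively controlled, consistent with the paper's observation that full certification requires extremely long runs.
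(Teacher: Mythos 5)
Your high-level plan matches the paper's strategy---reduce to Dieudonné determinant degrees over the skew Laurent ring $\linn(K)[t^{\pm};\sigma]$, run Oki's matrix expansion, and approximate the residual skew-field arithmetic by Lück-style finite quotients---but two of the steps you sketch would not go through as stated.

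First, the reduction to ``$t$-degrees of Dieudonné determinants of the cellular boundary matrices'' does not make sense literally: the boundary maps $d_n\colon C_n\to C_{n-1}$ are in general rectangular, so they have no Dieudonné determinant. The paper avoids this by passing through the \emph{universal $L^2$-torsion} $\rho^{(2)}_u$, which packages the complex into a single square weak isomorphism $C_{\mathrm{odd}}\to C_{\mathrm{even}}$, and then (Proposition~\ref{prop:altsum-lapl2}) rewrites $\chi^{(2)}$ as the alternating sum $-\tfrac12\sum_n(-1)^n n\,\deg_\phi\det\Delta_n$ over the \emph{combinatorial Laplacians}, which are square. Moreover Oki's algorithm computes the \emph{order} valuation, not the degree; converting between them uses that each $\Delta_n$ is self-adjoint, so that its determinant has a centrally symmetric polytope and $\deg_\phi = -2\ord_\phi$. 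Neither the square-ness nor the self-adjointness is available for the raw boundary maps, so this step of your proof needs to be reworked.

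Second, and more seriously, your certification mechanism does not yield a terminating algorithm. You propose to combine Lück-quotient approximations with ``complementary bounds derived from direct inspection of leading and trailing monomials in the Dieudonné determinant,'' but that determinant lives in $\linn(G)^\times_{\mathrm{ab}}$ with no explicit representative in hand; inspecting its extreme monomials is essentially the problem you started with, and finite quotients do not obviously certify nonvanishing of a given coefficient of an unknown $\linn(K_\phi)[v^{\pm}]$-polynomial (a nonzero element of $\linn(K_\phi)$ can vanish in every finite quotient you happen to try). The paper's termination argument instead invokes the Löh--Uschold \emph{quantitative} version of Lück's approximation theorem (Proposition~\ref{prop:quant-luck}): for a self-adjoint matrix and an ``adapted'' residual chain, the error is bounded by an explicit function of $k$ tending to $0$, so the algorithm can halt once the bound drops below $1/2$ and round to the nearest integer. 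Making this run requires two further ingredients you omit: the matrices fed to the rank oracle must be made self-adjoint (the paper replaces $A$ by $AA^*$ when needed, Remark~\ref{rmk:selfadjoint}), and computing an adapted chain requires solving the word problem in $G$, which is possible because $G$ is finitely presented and residually finite (Miller's parallel-enumeration algorithm). Without an effective error bound of this kind, ``halt when the two estimates coincide'' has no guarantee of ever being reached, and the statement you are trying to prove---that an \emph{algorithm} computes $\chi^{(2)}(\til M;\phi)$---would not follow.
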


In general, it would seem that this method does not determine the entire invariant in a finite amount of evaluations. In fact, assuming the Atiyah conjecture, the twisted $L^2$-Euler characteristic $\chi^{(2)}(\til M; {-})$ is a difference of two seminorms having polytopal unit balls. However, if $\chi^{(2)}(\til M; {-})$ is (plus or minus) a single seminorm, as is the case for many $3$-manifolds, then any enumeration of integral cohomology classes will eventually subsume all vertices of the corresponding polytope, plus one interior point for each facet. The values of the seminorm at these points determine the polytope, giving us a stopping condition that can be checked algorithmically. Hence, running the algorithm on all cohomology classes gives us:

\begin{cor*}
    In the same context as the preceding theorem, there exists an algorithm that computes $\chi^{(2)}(\til M; {-})$ in finite time whenever it is a seminorm up to sign.
\end{cor*}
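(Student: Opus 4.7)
The plan is to use the algorithm of the preceding Theorem as an oracle evaluating $f \defeq \chi^{(2)}(\til M; {-})$ at a given integral class, and to enumerate primitive classes $\phi_1, \phi_2, \ldots$ in $H^1(M; \ZZ)$ in some exhaustive order (for instance by $\ell^\infty$ norm with respect to a chosen basis). I will treat the case where $f$ itself is a seminorm; the sign-opposite case is symmetric and is detected from any evaluation with nonzero output. Since $f$ has a polytopal unit ball, it is the support function of a rational polytope $P \subset H_1(M; \RR)$, that is $f(\phi) = \max_{v \in P} \langle v, \phi \rangle$, so recovering $P$ is tantamount to recovering $f$.

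After $n$ evaluations the data cut out the super-polytope
\[
    P_n \defeq \{v \in H_1(M; \RR) : \langle v, \phi_j \rangle \le f(\phi_j), \; j \le n\} \supseteq P,
\]
which is effectively computable from its half-space description (for instance by vertex enumeration). The proposed stopping condition is: for each vertex $v$ of $P_n$, pick an integral class $\psi_v$ lying in the interior of the normal cone of $v$ in $P_n$ (such classes exist and are effectively findable, since each such cone is rational and full-dimensional); if every $\psi_v$ has already been evaluated and its value satisfies $f(\psi_v) = \langle v, \psi_v \rangle$, return $P_n$; otherwise proceed to $\phi_{n+1}$.

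Sufficiency of this test is the heart of the argument: suppose it passes but $P \subsetneq P_n$, and let $v$ be a vertex of $P_n$ outside $P$. Since $\psi_v$ lies in the interior of the normal cone of $v$, $v$ is the unique maximizer of $\langle {-}, \psi_v \rangle$ on $P_n$; combined with $P \subseteq P_n$ and $v \notin P$, this forces $f(\psi_v) = \max_{w \in P} \langle w, \psi_v \rangle < \langle v, \psi_v \rangle$, contradicting the check. For termination, note that $P$ has finitely many facets, each defined by a primitive integer normal, so an exhaustive enumeration eventually subsumes them all, at which point $P_n = P$, and finitely many additional steps will bring the required witnesses $\psi_v$ into the collection.

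The main obstacle is pinning down sufficiency as stated, and in particular justifying that probing must happen in the full-dimensional interior of each vertex's normal cone. A tempting weaker check---evaluating $f$ only along facet-normal rays of $P_n$---does not suffice, as small two-dimensional examples show: a candidate polytope strictly larger than $P$ can agree with $P$'s support function on every facet normal yet disagree on some direction in the interior of a maximal normal cone. A secondary concern, affecting only complexity, is the potential size of the denominators of the vertices of $P$, which controls how far the enumeration must run before certification; this is consistent with the paper's warning that the certified algorithm needs to run for an extremely long time in practice.
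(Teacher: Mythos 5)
Your proposal takes essentially the paper's route, transposed through polar duality. The paper works with the unit ball $B = \{\phi : f(\phi) \le 1\} \subset H^1(M;\RR)$ of the seminorm and certifies a candidate polytope by evaluating $f$ on rays through its vertices and through one relative-interior point of each facet; you instead work with the dual polytope $P \subset H_1(M;\RR)$ whose support function is $f$ (so $B = P^\circ$), cut out an outer approximation $P_n$ from half-spaces, and certify via normal-cone-interior evaluations. Under polarity, facet normals of $P_n$ correspond to vertices of $B_n = P_n^\circ$ and interior normal-cone directions to relative interiors of its facets, so these are the same checks. You also spell out the sufficiency and termination arguments that the paper leaves implicit, and you correctly observe that probing facet-normal rays alone does not suffice.

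There is, however, a gap in your sufficiency step. You pass from $P \subsetneq P_n$ to ``let $v$ be a vertex of $P_n$ outside $P$,'' but this is guaranteed only when $P_n$ is bounded. Early on $P_n$ is an unbounded polyhedron, and the test can then pass spuriously: with $P = [-1,1]^2$, after probing only $\pm e_1$ and $e_2$ one has $P_n = [-1,1] \times (-\infty, 1]$, whose only vertices $(\pm 1, 1)$ lie in $P$ and whose normal-cone checks succeed, yet $P_n \ne P$; and if $P_n$ has a nontrivial lineality space it has no vertices at all, so the test passes vacuously. The repair is to add to the stopping condition that $P_n$ be bounded (equivalently, that the probed $\phi_j$ positively span $H^1(M;\RR)$); since this holds for all large $n$, your termination argument is unaffected.
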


The approximation in our application of Oki's algorithm comes from Lück's approximation theorem and is not \emph{a priori} effectively computable; as such, we discuss these matters in light of Löh and Uschold's recent developments~\cite{l2comp}, which give effective bounds for its convergence. While this suffices to make our procedure into an exact algorithm, the bounds are extremely loose and impractical with respect to implementation.

However, tests show that a truncated, human-assisted version of the algorithm produces very good experimental results in a wide variety of cases, such as the following.
\begin{itemize}
    \item \textbf{Dunfield's link $\mathrm{L10n14}$.} As mentioned at the beginning, in the case of $3$-manifolds, our algorithm provides an experimental method for the computation of the Thurston norm; we remark that exact algorithms exist, both for manifolds having $2$-generator $1$-relator fundamental groups~\cite{thurst-fox} and for general manifolds, via normal surface methods~\cite{thurst-normal}. In order to get a quick estimate, one can also bound the Thurston norm with the Alexander norm, by \emph{McMullen's inequality}~\cite{mcmullen}; among other link complements, we demonstrate an example, first constructed by Dunfield~\cite{dunfield} as the complement of the two-component link L10n14, where our algorithm can correctly approximate the former even when the two norms differ.
    \item \textbf{Closed census $3$-manifolds.} We apply our algorithm to all closed manifolds in the census with rank-$1$ first homology. For these examples, the Thurston norm is naturally reduced to a single integer, greatly simplifying experimentation. Surprisingly, our algorithm produces the exact Thurston norm, even when Lück's theorem is applied with the trivial quotient. We also investigate the unique census manifold with rank-$2$ homology $\mathrm{v1539(5,1)}$, with good results.
    \item \textbf{Free-by-cyclic groups.} Free-by-cyclic groups can be thought of as extending the class of $3$-manifolds with boundary, by analogy with the semidirect product structure of such a manifold when it fibers over the circle. Considering this, we study a couple of randomly generated examples of free-by-cyclic groups.
    \item \textbf{The fiber of the Ratcliffe-Tschantz manifold.} Lastly, of particular interest is the computation of the $L^2$-Betti numbers of the fiber $F$ of the Ratcliffe-Tschantz $5$-manifold studied in~\cite{imm}, corroborating, in this case, the \emph{Singer conjecture} about the vanishing of the non-middle-dimensional $L^2$-Betti numbers.
    In this $4$-dimensional example, we skip the matrix expansion step, computing untwisted $L^2$-Betti numbers, which equal the twisted $L^2$-Betti numbers of the whole Ratcliffe-Tschantz manifold.
\end{itemize}
\subsection*{Structure of the paper}
In Section~1 we give an overview of the concepts underlying the theory of $L^2$-invariants, leading to the definition of the $L^2$-Betti numbers and the twisted $L^2$-Euler characteristic in Section~2. Section~3 introduces a way to view elements of the Linnell skew field $\linn(G)$ as formal differences of polytopes with integral vertices. Then, in Section~4, we define the universal $L^2$-torsion, an invariant whose Dieudonné determinant corresponds to a polytope whose thickness function is the twisted $L^2$-Euler characteristic. 
In Section~5, we describe our algorithm, introducing Oki's matrix expansion algorithm and Lück's approximation theorem; we discuss implementation details in Section~6.
In Section~7, we analyze the results of various experiments.
Finally, in Section~8, we present some final remarks and possibilities for future research.

\subsection*{Acknowledgments}
I would like to thank Dawid Kielak for suggesting the topic of $L^2$-Euler characteristics and for an interesting discussion, my advisor Bruno Martelli for his support during the writing of this paper, and Matthias Uschold for a conversation which led me to write Remark~\ref{rmk:selfadjoint}. I am also grateful to an anonymous referee for many helpful comments and suggestions.

\clearpage
\renewcommand{\baselinestretch}{0.93}\normalsize
\tableofcontents
\renewcommand{\baselinestretch}{1.0}\normalsize
\clearpage

\section{Preliminaries on \texorpdfstring{$L^2$-invariants}{L\^2-invariants}}
In this section, we review some basic notions in the theory of $L^2$-invariants. In what follows we take $G$ to be any group, even if in our applications we will always consider countable, torsion-free groups. Rings are unital but not necessarily commutative, and modules are left modules unless explicitly stated.

\subsection{The von Neumann algebra \texorpdfstring{$\mathcal N(G)$}{N(G)}}
Considering $G$ as a discrete measure space, we can form the complex Hilbert space $\ell^2(G)$ of square-summable sequences indexed by $G$, where the inner product
\begin{equation}
    \big\langle \sum_{g \in G} \alpha_g g, \sum_{g \in G} \beta_g g \big\rangle \coloneqq \sum_{g \in G} \alpha_g \overline{\beta_g}
\end{equation}
is antilinear in the second argument. There is a natural left action $G \curvearrowright \ell^2(G)$, defined by formal multiplication; this leads to the following definition.

\begin{defin}
The \emph{von Neumann algebra} $\mathcal N(G)$ is the algebra of $G$-equivariant endomorphisms of $\ell^2(G)$, that is, all linear bounded operators $\ell^2(G) \to \ell^2(G)$ that commute with the left action by $G$.
\end{defin}

To get a better grasp on this object, we note that any element $f \in \mathcal N(G)$ is determined by the image of the identity element $e \in G \subset \ell^2(G)$:
\begin{equation}
    f\left( \sum_{g\in G} \alpha_g g \right) = \sum_{g\in G} \alpha_g f(g)
        = \sum_{g\in G} \alpha_g g f(e).
\end{equation}
Therefore, we can identify $\mathcal N(G)$ with a subset of $\ell^2(G)$. Moreover, if we write $f(e) = \sum_{h \in G} \varphi_h h$, we have
\begin{equation}
    \sum_{g\in G} \alpha_g g f(e) = \sum_{g\in G} \alpha_g \sum_{h \in G} \varphi_h gh,
\end{equation}
which is formally an action by \emph{right} multiplication.

The complex number $\varphi_e = \langle f(e), e \rangle$ is called the \emph{von Neumann trace}, $\tr_{\mathcal N(G)}(f)$ and will play a fundamental role in defining the \emph{von Neumann dimension}.

\subsection{Von Neumann dimension for \texorpdfstring{$\mathcal N(G)$-modules}{N(G)-modules}}
The theory of modules over certain rings, such as fields, skew fields and PIDs, is characterized by the existence of a well-behaved \emph{dimension} (or \emph{rank}) function. This concept also applies to modules over $\mathcal N(G)$, with the peculiarity that the dimension is not necessarily an integer.

We start with some general definitions.
\begin{defin}
Let $R$ be a ring and let $M\subseteq N$ be two $R$-modules. The \emph{closure} of $M$ in $N$ is
\[
    \overline{M} \coloneqq \bigcap\;\{\ker f \mid f \in \Hom_R(N, R), \restr{f}{M} \equiv 0\}.
\]
\end{defin}

\begin{defin}
A \emph{predimension} on $R$ is a function\footnote{Strictly speaking, the term ``function'' should be taken to mean ``class function'', to avoid foundational issues when the domain is a proper class.}
\[ \pdim\colon \{\text{finitely generated projective $R$-modules}\} \to [0,+\infty) \]
such that:
\begin{enumerate}[label=(\arabic*)]
    \item $P \simeq Q \implies \pdim(P) = \pdim(Q)$;
    \item $\pdim(P\oplus Q) = \pdim(P) + \pdim(Q)$;
    \item if $K$ is a submodule of $Q$, then its closure $\overline K$ is a direct summand of $Q$ and
        \[
            \pdim{\overline K} = \sup\;\{ \pdim(P) \mid P \subseteq K \text{ fin.\,gen.\,projective}\}.
        \]
\end{enumerate}
\end{defin}

Such a function can be defined on $\mathcal N(G)$ as follows. Let $P$ be a finitely generated projective $\mathcal N(G)$-module. Then, $P$ is a quotient of some finitely generated free module $F = \mathcal N(G)^n$; it is also a direct summand by projectivity. The projection on $P$ is an endomorphism of $N(G)^n$ and can be seen as a square matrix $A$ with coefficients in $\mathcal N(G)$. Finally, we define
\begin{equation}
    \pdim_{\mathcal N(G)}(P) \coloneqq \tr_{\mathcal N(G)}(A) \coloneqq \sum_{i=1}^n \tr_{\mathcal N(G)}(A_{ii}).
\end{equation}

Extending this definition to a genuine dimension function can also be done in a general framework, as follows.

\begin{defin}
Given a predimension $\pdim$ on $R$, we can define a dimension function on all $R$-modules:
\[
    \dim(M) \coloneqq \sup\;\{ \pdim(P) \mid P \subseteq M \text{ fin.\,gen.\,projective}\} \in [0, +\infty].
\]
\end{defin}

\begin{thm}[{\cite[Theorem~6.7]{l2}}]
Let $\pdim$ be a predimension on $R$. Then:
\begin{enumerate}[label=(\arabic*)]
    \item $\dim$ agrees with $\pdim$ on the class of finitely generated projective $R$-modules;
    \item given a short exact sequence of $R$-modules
    \[
        \begin{tikzcd}
        0 \arrow[r] & A \arrow[r] & B \arrow[r] & C \arrow[r] & 0,
        \end{tikzcd}
    \]
    we have $\dim(B) = \dim(A) + \dim(C)$, where the sum is extended to $[0,+\infty]$ in the natural way;
    \item if $\{M_i\mid i \in I\}$ is a family of submodules of $M$ such that any two $M_i$, $M_j$ are contained in some $M_k$, and $\bigcup_{i \in I} M_i = M$, then
    \[
        \dim(M) = \sup\;\{\dim(M_i)\mid i \in I\};
    \]
    \item if $K \subseteq M$ is a submodule, then $\dim(\overline K) = \dim K$;
    \item properties (1)--(4) uniquely characterize $\dim$.
\end{enumerate}
\end{thm}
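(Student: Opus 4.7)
The plan is to verify the five properties largely in the stated order, with (2) being the only step requiring substantial work; the other items follow by fairly direct manipulation of the supremum definition and the three axioms on $\pdim$.

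For (1), the inequality $\dim(P) \geq \pdim(P)$ is immediate from taking $P$ itself as the f.g.\ projective submodule in the supremum. For the reverse, any f.g.\ projective submodule $Q \subseteq P$ has closure $\overline Q$, which by predimension axiom~(3) is a direct summand of $P$; writing $P = \overline Q \oplus R$ gives $\pdim(P) = \pdim(\overline Q) + \pdim(R) \geq \pdim(\overline Q) \geq \pdim(Q)$, so $\pdim(P)$ is an upper bound for the defining supremum. For (3), any f.g.\ projective submodule $P$ of the directed union $M = \bigcup_i M_i$ is generated by finitely many elements, each lying in some $M_{i_k}$; by directedness all of them fit into a single $M_k$, so $P \subseteq M_k$, and the supremum defining $\dim(M)$ reduces to the supremum of the $\dim(M_i)$. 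For (4), predimension axiom~(3) rewrites $\pdim(\overline K)$ exactly as a supremum over f.g.\ projective submodules of $K$, matching the definition of $\dim(K)$ once one checks that f.g.\ projective submodules of $\overline K$ can be replaced by their closures without changing the supremum.

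The main obstacle is (2). Denote the sequence by $0 \to A \to B \xrightarrow{\pi} C \to 0$. The $\geq$ direction is straightforward: given f.g.\ projective $P_A \subseteq A$ and $P_C \subseteq C$, projectivity of $P_C$ splits the restriction of $\pi$ to a lift $\sigma\colon P_C \to B$, and $P_A + \sigma(P_C) \subseteq B$ is isomorphic to $P_A \oplus P_C$, so additivity of $\pdim$ plus suprema yields $\dim(B) \geq \dim(A) + \dim(C)$. The delicate half is $\leq$: for a f.g.\ projective $P \subseteq B$, the pieces $P \cap A \subseteq A$ and $\pi(P) \subseteq C$ are in general neither finitely generated nor projective, so one cannot apply $\pdim$ directly. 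The idea is to approximate: f.g.\ projective submodules of $\pi(P)$ lift through $\pi$ into $P$, and those of $P \cap A$ sit inside $A$; using axiom~(3) to pass to closures (which are direct summands) one packages a direct-sum decomposition inside $P$ and obtains $\pdim(P) \leq \dim(P \cap A) + \dim(\pi(P)) \leq \dim(A) + \dim(C)$. Taking the supremum over $P$ gives the claim, and extending the formula to $\dim(B), \dim(A), \dim(C) \in [0,+\infty]$ is a separate but routine check.

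Finally, (5) follows by a standard approximation argument: any dimension function satisfying (1)--(4) must agree with $\pdim$ on f.g.\ projective modules by (1), and on an arbitrary module $M$ both candidates are determined as the supremum over the directed family of f.g.\ projective submodules, using (2) to pass from a submodule to a quotient and (3) for the directed limit. The real content of the theorem is thus (2); the rest is essentially bookkeeping with the sup definition and the three predimension axioms.
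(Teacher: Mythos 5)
The paper does not prove this statement; it is cited directly from Lück's book (Theorem~6.7 of~\cite{l2}), so there is no in-paper proof to compare against. Evaluating your outline on its own merits: items~(1), (2), and~(3) are argued along the standard lines, and in particular your treatment of~(2) (close $P\cap A$ inside the f.g.\ projective $P$ using axiom~(3), split off a direct summand that injects into $C$) is the correct mechanism.

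However, your argument for~(4) has a genuine gap. You write "predimension axiom~(3) rewrites $\pdim(\overline K)$ exactly as a supremum over f.g.\ projective submodules of $K$," but predimension axiom~(3) only applies when $K$ sits inside a \emph{finitely generated projective} module $Q$, in which case $\overline K$ (being a direct summand of $Q$) is itself f.g.\ projective and $\pdim(\overline K)$ makes sense. In item~(4) of the theorem, $K$ is an arbitrary submodule of an arbitrary module $M$; then $\overline K$ (the closure in $M$) need not be f.g.\ projective, so $\pdim(\overline K)$ is not even defined and axiom~(3) does not apply. What must actually be shown is that any f.g.\ projective $P\subseteq\overline K$ satisfies $\pdim(P)\le\dim(K)$; if one tries your closure trick inside $P$ --- writing $P = \overline{P\cap K}\oplus R$ --- the remaining task is to argue that $R=0$, which does not follow from the hypotheses without further work, since a homomorphism $P\to R$ vanishing on $P\cap K$ need not extend to a homomorphism $M\to R$ vanishing on $K$. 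The parenthetical "once one checks that f.g.\ projective submodules of $\overline K$ can be replaced by their closures" is exactly where the real content lies, and the sketch does not indicate how this check would go. A similar caveat applies to~(5): the family of f.g.\ projective submodules of an arbitrary $M$ is not directed (the sum of two projective submodules need not be projective), so one cannot invoke~(3) over that family as written; the uniqueness argument in Lück's proof runs through finitely generated submodules and free presentations rather than directly over projective submodules.
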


It follows, of course, that all $\mathcal N(G)$-modules have a well-defined dimension $\dim_{\mathcal N(G)}$.

\section{\texorpdfstring{$L^2$-Betti}{L\^2-Betti} numbers}
If $X$ is a $G$-CW complex, i.e.\ a CW complex with a properly discontinuous, cellular $G$-action that is free on open cells, then its cellular chain groups $C_n(X)$ have a natural left $\mathbb ZG$-module structure, which is respected by the boundary maps. On the other hand, the von Neumann algebra contains $\mathbb ZG$ as a subring and is a right module over it. Therefore, we can form the tensor product of $\mathcal N(G)$ and $C_\blt(X)$ over $\mathbb ZG$, and use it to define the \emph{$L^2$-Betti numbers}.

\begin{defin}
    The \emph{$L^2$-Betti numbers} of a $G$-CW complex $X$ are defined as
    \[
        b^{(2)}_n(X; G) \coloneqq \dim_{\mathcal N(G)} H_n(\mathcal N(G) \otimes_{\mathbb ZG} C_\blt(X)).
    \]
    It is also natural to define the \emph{$L^2$-Euler characteristic}
    \[
        \chi^{(2)}(X; \neu (G)) \coloneqq \sum_{n \ge 0} (-1)^n \cdot b^{(2)}_n(X; \neu (G))
    \]
    whenever the series is absolutely convergent. We shall simply write $b^{(2)}_n(X)$ and $\chi^{(2)}(X)$ if the choice of $G$ is clear from the context.
    \label{def:l2betticw}
\end{defin}
The prime example of a $G$-CW complex is the universal covering $\widetilde Y$ of a connected CW complex $Y$, where of course $G = \pi_1(Y)$. Here the $C_n(\widetilde Y)$ are even free over $\mathbb ZG$, generated by $G$-orbits of $n$-cells in $\widetilde Y$, that is, inverse images of $n$-cells of $Y$.

By inspecting the definition, we note that a $G$-homotopy of $G$-complexes induces a $\ZZ G$-chain homotopy between the cellular chain complexes, which becomes a $\mathcal N(G)$-chain homotopy after tensoring. Hence, the $L^2$-Betti numbers are $G$-homotopy invariants (see~\cite[Theorem~6.54~(1b)]{l2} for a more general statement), so $b^{(2)}_n(\til Y)$ and $\chi^{(2)}(\til Y)$ are homotopy invariants of $Y$.

Somewhat surprisingly, for a finite CW complex $Y$ we have $\chi^{(2)}(\widetilde Y) = \chi(Y)$~\cite[Theorem~1.35, (2)]{l2}.

\begin{remark}
Definition~\ref{def:l2betticw} actually applies to the extremely general case of a chain complex $C_\blt$ of $\mathbb ZG$-modules: in fact, we can define its $L^2$-Betti numbers simply as
\[
    b^{(2)}_n(C_\blt; \mathcal N(G)) \coloneqq \dim_{\mathcal N(G)} H_n(\mathcal N(G) \otimes_{\mathbb ZG} C_\blt).
\]
\end{remark}

Unlike classical Betti numbers, their $L^2$ counterparts enjoy a multiplicative property when passing to finite coverings or, equivalently, subgroups of $G$ of finite index:
\begin{prop}[compare~{\cite[Theorem~6.54~(6)]{l2}}]
Let $X$ be a $G$-CW complex and let $i\colon H\hookrightarrow G$ be the inclusion of a subgroup of finite index. Then $X$ is also an $H$-CW complex $i^*X$ and
\[
    b_n^{(2)}(i^*X; \mathcal N(H)) = [G:H] \cdot  b_n^{(2)}(X; \mathcal N(G)).
\]
As a consequence, if $Y$ is a CW-complex and $Z$ is a finite covering of degree d, then
\[
    b_n^{(2)}(\widetilde Z) = d\cdot b_n^{(2)}(\widetilde Y).
\]
\end{prop}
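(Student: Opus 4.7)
The plan is to reduce the formula to a change-of-rings identity between the two algebras $\neu(H) \subset \neu(G)$, where the inclusion is obtained by taking the weak-operator closure of $\CC H$ inside $\neu(G)$; this inclusion is trace-preserving, since both traces are evaluation at $e \in G$.

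The main lemma, which I would prove first, is the multiplicativity of the von Neumann dimension under restriction of scalars: for every $\neu(G)$-module $V$,
\[
    \dim_{\neu(H)}(V) = d \cdot \dim_{\neu(G)}(V), \qquad d \defeq [G:H].
\]
To prove it, fix left coset representatives $g_1,\dots,g_d$ so that $\ell^2(G) = \bigoplus_{i=1}^d \ell^2(H) g_i$ as left $\neu(H)$-modules. Any $T \in \neu(G)$ decomposes as a $d \times d$ $H$-equivariant block matrix $(T_{ij})$ with entries $T_{ij} \in \neu(H)$, and left $G$-equivariance of $T$ forces the diagonal trace identity $\tr_{\neu(H)}(T_{ii}) = \tr_{\neu(G)}(T)$ for every $i$: both sides equal the coefficient of $e_G$ in $T(e_G)$. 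Summing over $i$, the matrix trace of $(T_{ij})$ equals $d \cdot \tr_{\neu(G)}(T)$. Substituting this into the defining formula $\pdim(P) = \tr(A)$ (with $A$ a projection onto $P$) yields the dimension identity for f.g.\ projective $\neu(G)$-modules, and the sup-characterization of $\dim$ extends it to all $\neu(G)$-modules.

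The second step compares the two chain complexes involved. Using the coset decomposition and the block matrix description, one sees that $\neu(G) \cong \neu(H) \otimes_{\ZZ H} \ZZ G$ as $(\neu(H), \ZZ G)$-bimodules; hence for every $\ZZ G$-module $M$ there is a natural $\neu(H)$-module isomorphism $\neu(H) \otimes_{\ZZ H} M \cong \neu(G) \otimes_{\ZZ G} M$, where the right side carries its $\neu(H)$-structure by restriction. Applied degreewise to $C_\blt(X)$, this gives an $\neu(H)$-chain isomorphism between $\neu(H) \otimes_{\ZZ H} C_\blt(X)$ and the restriction to $\neu(H)$ of $\neu(G) \otimes_{\ZZ G} C_\blt(X)$. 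Taking $n$th homology and applying the dimension lemma to the $\neu(G)$-module $H_n(\neu(G) \otimes_{\ZZ G} C_\blt(X))$ yields $b_n^{(2)}(i^*X; \neu(H)) = d \cdot b_n^{(2)}(X; \neu(G))$.

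The corollary on coverings is then immediate: if $Z \to Y$ has degree $d$, then $\widetilde Z = \widetilde Y$ as universal covers and $\pi_1(Z) \subset \pi_1(Y)$ has index $d$, so the formula applies with $X = \widetilde Y$. The main obstacle I anticipate is the trace identity $\tr_{\neu(H)}(T_{ii}) = \tr_{\neu(G)}(T)$: morally it expresses that left $G$-equivariance makes all cosets ``look identical'', but the rigorous check requires carefully unwinding the identifications $\ell^2(H) g_i \cong \ell^2(H)$ and verifying that the coefficient of $e_H$ in $T_{ii}(e_H)$ matches the coefficient of $e_G$ in $T(e_G)$.
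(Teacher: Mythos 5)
Your proof follows the standard route (this is precisely the argument sketched in the cited reference, so it is not a different approach). The coset decomposition $\ell^2(G) = \bigoplus_{i=1}^d \ell^2(H)g_i$, the block-matrix description of $\neu(G)$ over $\neu(H)$, the diagonal trace identity (which you flagged as the risky step — it is in fact fine, for exactly the reason you give), the bimodule isomorphism $\neu(G) \cong \neu(H)\otimes_{\ZZ H}\ZZ G$, and the use of exactness of restriction to commute homology and restriction are all correct.

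The one place where you wave your hands is the sentence ``the sup-characterization of $\dim$ extends it to all $\neu(G)$-modules.'' This hides a genuine asymmetry. The inequality $\dim_{\neu(H)}(V) \ge d\cdot\dim_{\neu(G)}(V)$ does follow directly from the sup characterization: every f.g.\ projective $\neu(G)$-submodule $P\subseteq V$ is also f.g.\ projective over $\neu(H)$ (since $\neu(G)$ is free of rank $d$ over $\neu(H)$), and you have the right multiplicative relation for such $P$. But the reverse inequality is not formal: a f.g.\ projective $\neu(H)$-submodule $Q\subseteq V$ need not sit inside a f.g.\ projective $\neu(G)$-submodule, so you cannot compare the two suprema term by term. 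The clean way to close this is to verify that $V\mapsto \tfrac1d\dim_{\neu(H)}(\operatorname{res}_H V)$ satisfies all of the axioms (1)--(4) for the $\neu(G)$-dimension and invoke the uniqueness clause (5). Axioms (1)--(3) are straightforward (your trace computation, exactness of restriction, cofinality), but axiom (4) requires comparing the closure of a submodule $K$ taken over $\neu(G)$ with the closure taken over $\neu(H)$; these are defined via different Hom-sets, and showing they agree (equivalently, that $\dim_{\neu(H)}$ vanishes on $\overline{K}^{\neu(G)}/K$) is a nontrivial point that your write-up does not address. Finally, a terminology slip: you call the $g_i$ ``left coset representatives,'' but the decomposition $\ell^2(G)=\bigoplus\ell^2(H)g_i$ needs a transversal of the right cosets $Hg_i$; the argument is unaffected.
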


A special class of CW complexes is characterized by the vanishing of all $L^2$-Betti numbers. Such complexes are called \emph{$L^2$-acyclic} and include all hyperbolic odd-dimensional closed manifolds and all mapping tori of finite connected CW-complexes (such as fibrations over the circle). In this case, we can obtain finer invariants by twisting with an infinite-dimensional $\mathbb ZG$-module.

\begin{defin}
Let $X$ be a $G$-CW complex and let $\phi\colon G \to \mathbb Z$ be a character. Then $G$ acts on the Laurent polynomial ring $\mathbb Z[t, t^{-1}]$ via $g \cdot p \coloneqq t^{\phi(g)}p$, and we can define the $\mathbb ZG$-chain complex
\[
    \overline C_\blt(X) \coloneqq C_\blt(X) \otimes_\mathbb Z \mathbb Z[t, t^{-1}]
\]
with the diagonal action $g\cdot (\sigma \otimes p) \coloneqq g\cdot \sigma \otimes g\cdot p$.

The \emph{twisted $L^2$-Betti numbers} of $X$ are given by
\[
    b_n^{(2)}(X; \mathcal N(G), \phi) \coloneqq \dim_{\mathcal N(G)} H_n(\mathcal N(G) \otimes_{\mathbb ZG} \overline C_\blt(X)).
\]
\end{defin}
\begin{defin}
    A $G$-CW complex $X$ is said to be \emph{$\phi$-$L^2$-finite} if
    \[
        \sum_{n \ge 0} b_n^{(2)}(X; \mathcal N(G), \phi) < +\infty.
    \]
    If this is the case, we call
    \[
        \chi^{(2)}(X; \mathcal N(G), \phi) \coloneqq \sum_{n \ge 0} (-1)^n\cdot b_n^{(2)}(X; \mathcal N(G), \phi)
    \]
    the \emph{$\phi$-twisted $L^2$-Euler characteristic} of $X$.
\end{defin}
These invariants also have desirable properties, as evidenced by the following results of Friedl-Lück:
\begin{prop}[{\cite[Theorem~2.5, Lemma~2.6]{l2thur}}]%
\label{prop:twisted-l2}
Let $X$ be a $G$-CW complex and let $\phi\colon G \to \mathbb Z$. Then:
\begin{enumerate}[label=(\arabic*)]
    \item Let $i\colon H \to G$ be the inclusion of a subgroup of finite index; then the $H$-CW complex $i^* X$ is $(\phi \circ i)$-$L^2$-finite if and only if $X$ is $\phi$-$L^2$-finite, and if this is the case, then
    \[
        \chi^{(2)}(i^* X; \mathcal N(H), \phi\circ i)
        = [G:H]\cdot\chi^{(2)}(X; \mathcal N(G), \phi);
    \]
    \item For every integer $k \ge 1$, $X$ is $\phi$-$L^2$-finite if and only if it is $(k\phi)$-$L^2$-finite, and if this is the case, then
\[\chi^{(2)}(X; \mathcal N(G), k\phi) = k \cdot \chi^{(2)}(X; \mathcal N(G), \phi);\]
    \item Suppose that $\phi$ is the trivial character; then X is $\phi$-$L^2$-finite if and only if we have $b^{(2)}_n (X; \mathcal N(G)) = 0$ for all $n \ge 0$. If this is the case, then \[\chi^{(2)}(X; \mathcal N(G), 0) = 0.\]
    \item If $\phi$ is primitive, i.e.\;surjective, and $i\colon K \to G$ is the inclusion of its kernel, then $X$ is $\phi$-$L^2$-finite if and only if
    $b^{(2)}_n(i^*X; \mathcal N(K)) < \infty$ for all $n \in \mathbb N$.
If this is the case, then
\[b^{(2)}_n(X; \mathcal N(G), \phi) = b^{(2)}_n(i^*X; \mathcal N(K)).\]
\end{enumerate}
\end{prop}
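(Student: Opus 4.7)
The four statements share a common strategy: identify the twisted chain complex $\overline{C}_\bullet(X)$ with a simpler $\mathbb{Z}G$-chain complex, and then invoke standard facts about von Neumann dimensions of homology modules. I would handle each part by choosing the right decomposition or reinterpretation of $\mathbb{Z}[t, t^{-1}]$ as a $\mathbb{Z}G$-module.

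For (1), the restriction $i^*\overline{C}_\bullet(X)$ of the twisted complex to $\mathbb{Z}H$ coincides with the twisted complex of $i^*X$ with respect to $\phi\circ i$, because the $H$-action on $\mathbb{Z}[t,t^{-1}]$ through $\phi\circ i$ is simply the restriction of the $G$-action through $\phi$. The claim then follows from the general multiplicativity of $L^2$-Betti numbers of $\mathbb{Z}G$-chain complexes under restriction to finite-index subgroups (this is the chain-complex analogue of the proposition stated earlier in the section). For (2), the key observation is the $\mathbb{Z}G$-module decomposition
\[
    \mathbb{Z}[t,t^{-1}]_{\,k\phi} \;\cong\; \bigoplus_{j=0}^{k-1} t^{j}\,\mathbb{Z}[t^{k}, t^{-k}] \;\cong\; \bigl(\mathbb{Z}[t,t^{-1}]_{\,\phi}\bigr)^{\oplus k},
\]
where the subscript records the $G$-action. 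Each summand $t^{j}\mathbb{Z}[t^{k}, t^{-k}]$ is stable under the $k\phi$-action because $g$ multiplies by $t^{k\phi(g)}$, and is isomorphic to $\mathbb{Z}[t,t^{-1}]$ with the $\phi$-action via $t^{k}\mapsto t$. Tensoring with $C_\bullet(X)$ yields $\overline{C}_\bullet^{(k\phi)}(X) \cong \overline{C}_\bullet^{(\phi)}(X)^{\oplus k}$ as $\mathbb{Z}G$-complexes, from which both assertions follow by additivity of the dimension.

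For (3), when $\phi = 0$ the $G$-action on $\mathbb{Z}[t,t^{-1}]$ is trivial, and we get the $\mathbb{Z}G$-module isomorphism $\overline{C}_\bullet(X) \cong \bigoplus_{n\in\mathbb{Z}} C_\bullet(X)$. Passing to $\mathcal{N}(G)\otimes_{\mathbb{Z}G}-$ commutes with direct sums, and homology commutes with direct sums of chain complexes, so $H_n(\mathcal{N}(G)\otimes_{\mathbb{Z}G}\overline{C}_\bullet(X))$ is a countable direct sum of copies of $H_n(\mathcal{N}(G)\otimes_{\mathbb{Z}G} C_\bullet(X))$. Using additivity of $\dim_{\mathcal{N}(G)}$ over countable direct sums via the cofinal-union property of the dimension function stated in the previous section, the dimension of such a sum is either $0$ or $+\infty$; so $\phi$-$L^2$-finiteness forces each ordinary $L^2$-Betti number to vanish, and conversely if they all vanish then $\chi^{(2)}(X;\mathcal{N}(G),0) = 0$.

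The interesting step is (4). When $\phi$ is surjective with kernel $K$, we have $\mathbb{Z}[t, t^{-1}] \cong \mathbb{Z}[G/K] \cong \mathbb{Z}G\otimes_{\mathbb{Z}K}\mathbb{Z}$ as left $\mathbb{Z}G$-modules, and a direct verification shows that the map
\[
    \mathbb{Z}G\otimes_{\mathbb{Z}K} i^{*}C_\bullet(X) \;\longrightarrow\; \overline{C}_\bullet(X), \qquad g\otimes\sigma \;\longmapsto\; g\sigma\otimes t^{\phi(g)},
\]
is a well-defined $\mathbb{Z}G$-isomorphism of chain complexes (well-defined because $\phi|_K\equiv 0$, equivariant by direct check, and clearly bijective on each cell). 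Tensoring with $\mathcal{N}(G)$ and using associativity gives $\mathcal{N}(G)\otimes_{\mathbb{Z}G}\overline{C}_\bullet(X) \cong \mathcal{N}(G)\otimes_{\mathbb{Z}K} i^{*}C_\bullet(X) \cong \mathcal{N}(G)\otimes_{\mathcal{N}(K)}\bigl(\mathcal{N}(K)\otimes_{\mathbb{Z}K} i^{*}C_\bullet(X)\bigr)$. The main technical input here, and the step I would regard as the principal obstacle, is that $\mathcal{N}(G)$ is faithfully flat over $\mathcal{N}(K)$ and preserves the von Neumann dimension, i.e.\ $\dim_{\mathcal{N}(G)}(\mathcal{N}(G)\otimes_{\mathcal{N}(K)} M) = \dim_{\mathcal{N}(K)} M$; this is a nontrivial property of von Neumann algebras of subgroups (and is where the subgroup structure really matters), but once invoked it immediately yields $b_n^{(2)}(X;\mathcal{N}(G),\phi) = b_n^{(2)}(i^{*}X;\mathcal{N}(K))$ and hence the equivalence of finiteness conditions.
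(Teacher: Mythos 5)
Your proof is correct, and the four reductions you use are essentially the ones in Friedl--Lück's own proof of Theorem~2.5 and Lemma~2.6 in \cite{l2thur}: the untwisting isomorphism $\overline{C}_\blt(X) \cong \ZZ G \otimes_{\ZZ K} i^* C_\blt(X)$ for (4), the direct-sum decomposition of $\ZZ[t,t^{-1}]_{k\phi}$ into $k$ copies of $\ZZ[t,t^{-1}]_\phi$ for (2), the infinite direct sum for $\phi=0$ combined with the cofinal-supremum property of $\dim_{\neu(G)}$ for (3), and restriction plus multiplicativity of $L^2$-Betti numbers for (1). One terminological remark: what you actually need in (4) is not faithful flatness per se but flatness of $\neu(G)$ over $\neu(K)$ (so induction commutes with taking homology) together with dimension preservation $\dim_{\neu(G)}(\neu(G)\otimes_{\neu(K)} M) = \dim_{\neu(K)} M$; both are supplied by \cite[Theorem~6.29]{l2}, which the paper itself invokes later for the same purpose. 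With that caveat, the argument is complete.
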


This invariant is very valuable from a geometric perspective. Just like the untwisted case, Proposition~\ref{prop:twisted-l2}~(1) gives multiplicativity with respect to finite coverings; however, (4) gives us control on \emph{infinite cyclic coverings}. If $Y$ is a finite connected CW complex and $X$ is its universal cover $G$-CW complex, then the $i^*X$ appearing in the statement is the $K$-CW complex associated to $\overline Y_\infty$, the $\ZZ$-covering of $Y$ defined by $\phi$. Whenever $\phi$ is \emph{fibered}, that is, induced by a fibration $Y \surj S^1$ via the $\pi_1$ functor, this covering deformation retracts onto the fiber $F$. Therefore, we have
\begin{equation}
    \chi^{(2)}(\til Y; \phi) = \chi^{(2)}(\til{\overline{Y}}_\infty) = \chi^{(2)}(\til{F}) = \chi(F).
\end{equation}

This is reminiscent of the \emph{Thurston norm}, which equals the negative of the fiber Euler characteristic for fibered classes of a $3$-manifold. Indeed, we have:

\begin{prop}[{\cite[Theorem~0.2]{l2thur}}]\label{prop:thurston-l2}
Let $M\ne S^2 \times D^1$ be a compact, connected, orientable, irreducible $3$-manifold, whose boundary is a union of zero or more tori and whose fundamental group is infinite. Then, for any $\phi \in H^1(M; \ZZ)$,
\[-\chi^{(2)}(\til M; \phi) = x_M (\phi),\]
where $x_M$ is the Thurston seminorm of $M$.
\end{prop}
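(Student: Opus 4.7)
The plan is to verify the identity first on fibered classes, where both sides reduce to $-\chi(F)$ for the fiber $F$, and then to extend to arbitrary $\phi$ by combining virtual fibering with multiplicativity under finite covers. The tacit premise behind the extension is that both $-\chi^{(2)}(\til M; {-})$ and $x_M$ are continuous seminorms on $H^1(M;\RR)$; for the Thurston norm this is classical (it is polyhedral), while for the $L^2$-side it ultimately relies on the universal $L^2$-torsion and polytope machinery developed in Sections~3--4.

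For a primitive fibered $\phi$, let $F$ denote the fiber of the corresponding fibration $p\colon M \to S^1$. The associated infinite cyclic cover of $M$ deformation retracts onto $F$, and the long exact sequence of $p$ identifies $\pi_1(F)$ with $K = \ker\phi$. Proposition~\ref{prop:twisted-l2}~(4) then gives $b^{(2)}_n(\til M;\phi) = b^{(2)}_n(\til F)$ for all $n$; since $F$ is a finite compact surface distinct from $S^2$ and $D^2$ under our hypotheses, the formula mentioned right after Definition~\ref{def:l2betticw} yields $\chi^{(2)}(\til F) = \chi(F)$. Because $x_M(\phi) = -\chi(F)$ holds by Thurston's result on fibered classes, the identity follows; Proposition~\ref{prop:twisted-l2}~(2) together with the homogeneity of $x_M$ then covers all integer multiples of $\phi$.

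For a general class I would reduce to the fibered case via geometrization and Agol's virtual fibering theorem: for any rational $\phi \in H^1(M;\QQ)$, there is a finite cover $\pi\colon N \to M$ in which $\pi^*\phi$ lies in the closure of a fibered cone of $N$. Within this cone, the previous step together with continuity give the identity at $\pi^*\phi$. Proposition~\ref{prop:twisted-l2}~(1) provides the multiplicativity of $\chi^{(2)}$ under $\pi$, and the well-known analogous multiplicativity of the Thurston norm then transfers the identity back to $\phi$; a final continuity step handles real classes. Seifert pieces of the JSJ decomposition can be handled directly, as both sides are explicitly computable there.

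The main obstacle is precisely the continuity and seminorm behavior of $-\chi^{(2)}(\til M; {-})$, which is not visible from Proposition~\ref{prop:twisted-l2} alone and genuinely requires the polytope-theoretic description of the universal $L^2$-torsion introduced in Sections~3--4. A secondary difficulty is coordinating the Agol reduction across the JSJ decomposition so that the finite covers chosen on different pieces glue consistently; that is really where the weight of the argument sits.
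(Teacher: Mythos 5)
The paper gives no proof of this proposition --- it is cited directly as Theorem~0.2 of Friedl and L\"uck --- so there is no in-text argument to compare against. Evaluated on its own, your sketch points at a legitimate strategy for the hyperbolic case but glosses over the steps that carry most of the weight.

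First, the reduction to fibered classes needs more than ``Agol's virtual fibering theorem'': that result furnishes \emph{some} fibered finite cover, but you require that the pullback of your chosen $\phi$ lands in the closure of a fibered cone. This virtual quasi-fibering statement is true for hyperbolic $3$-manifolds (a consequence of RFRS together with the resolution of the Virtual Haken Conjecture), but it is a separate theorem and must be invoked precisely rather than folded into ``virtual fibering.'' Second, and more seriously, the JSJ decomposition is not actually handled: for graph manifolds and mixed manifolds the fibering mechanism is unavailable, and additivity of $-\chi^{(2)}(\til M;{-})$ along JSJ tori --- which you would need in order to glue the pieces --- is itself a non-trivial Mayer--Vietoris statement at the level of universal $L^2$-torsion, not something that follows from ``both sides are explicitly computable on Seifert pieces.'' Third, Theorem~\ref{thm:diff-seminorms} only exhibits $-\chi^{(2)}(\til M;{-})$ as a \emph{difference} of seminorms, not a seminorm; for the density argument you only need continuity, which the difference-of-seminorms description does supply, so this is a fixable slip, but as stated your ``tacit premise'' assumes part of the conclusion you are trying to prove.
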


Hence, the twisted $L^2$-Euler characteristic is a natural extension of the Thurston norm to spaces that are more general than $3$-manifolds.

The condition of $\phi$-$L^2$-finiteness, necessary for its definition, may seem daunting at first, but it can be elegantly dealt with by assuming the so-called \emph{Atiyah conjecture} for the fundamental group $G$. This will also ensure that the twisted $L^2$-Euler characteristic is especially well behaved.

\subsection{The Atiyah conjecture}\label{sec:atiyah}
Let $G$ be a torsion-free group; examples include, notably, all hyperbolic manifold groups.
\begin{defin}[Atiyah conjecture]
We say that $G$ satisfies the \emph{Atiyah conjecture} if, for any matrix $A \in \mathrm{M}(m, n, \mathbb QG)$, acting by right multiplication as a map \[r_A\colon (\mathcal N(G))^m \to (\mathcal N(G))^n,\] the von Neumann dimension $\dim_{\mathcal N(G)} \ker(r_A)$ is an integer.
\end{defin}

This conjecture holds for a rather large class of torsion-free groups, containing all torsion-free fundamental groups of $3$-manifolds~\cite{atiyah-3mfd} and closed under taking subgroups~\cite[Theorem~3.2~(1)]{l2thur}. At the time of writing this paper, there are no known counterexamples to the Atiyah conjecture among torsion-free groups; otherwise, see~\cite{atiyah-torsion}.

This assumption opens a road to the computation of $L^2$-invariants by replacing the von Neumann algebra with the slightly less elusive \emph{Linnell skew field}.
\begin{defin}
Since $\mathcal N(G)$ satisfies the Ore condition with respect to its set of non-zero-divisors~\cite[Theorem~8.22~(1)]{l2}, we can define its Ore localization $\mathcal U(G)$. 
The \emph{Linnell skew field} $\mathcal D(G)$ is defined as the smallest subring of $\mathcal U(G)$ that contains $\mathbb QG$ and is division closed, i.e.\;it contains inverses of all the $\mathcal U(G)$-units in it.
\end{defin}

The following result by Friedl-Lück expands on the ideas we introduced earlier.

\begin{thm}[{\cite[Theorem~3.8]{l2thur}}]\label{thm:DG}
    Let $G$ be a torsion-free group that satisfies the Atiyah conjecture and let $C_\blt$ be a chain complex of projective $\mathbb QG$-modules. Then:
    \begin{enumerate}[label=(\arabic*)]
        \item The ring $\mathcal D(G)$ is a skew field.
        \item For all $n\ge 0$,
        \[
            b_n^{(2)}(C_\blt; \mathcal N(G))
                = \dim_{\mathcal D(G)} H_n(\mathcal D(G) \otimes_{\mathbb QG} C_\blt),
        \]
        which is either a natural number or $+\infty$.
        \item Let $\phi\colon G \to \mathbb Z$ be surjective with kernel $K$. Then $G$ can be identified with the semidirect product $K \rtimes_{c} \mathbb Z$, where the $\mathbb Z$-factor is generated by any element $u$ such that $\phi(u) = 1$ acting on $K$ via the conjugation $c$. The character $\phi$ becomes the canonical projection $G = K \rtimes_c \mathbb Z \to \mathbb Z$.
        
        The conjugation induces an automorphism $t \in \mathrm{Aut}(\mathcal D(K))$, enabling us to define the \emph{skew Laurent polynomial ring} $\mathcal D(K)_t[u^{\pm 1}]$, with multiplication defined on monomials as $au^i \cdot bu^j \coloneqq at^{-i}(b)u^{i+j}$.

        Then, this ring is a (noncommutative) principal ideal domain and it satisfies the Ore condition with respect to its subset $T$ of non-zero elements. Moreover, there is a canonical isomorphism of skew fields
        \[
            T^{-1} \mathcal D(K)_t[u^{\pm 1}] \xlongrightarrow{\sim} \mathcal D(G).
        \]

        \item Let $\phi\colon G \to \mathbb Z$ be surjective with kernel $K$ and let $i: K \hookrightarrow G$ be the inclusion. Suppose that $C_\blt$ is finitely generated and $b^{(2)}_n(\mathcal N(G) \otimes_{\mathbb QG} C_\blt) = 0$ for all $n \ge 0$.
        
        Consider the $\mathbb QK$-chain complex $i^* C_\blt$ obtained by restriction. Then, for all $n \ge 0$:
        \begin{itemize}
            \item the modules $H_n(\mathcal D(K) \otimes_{\mathbb QK} i^*C_\blt)$ and $H_n(\mathcal D(K)_t[u^{\pm 1}] \otimes_{\mathbb QG} C_\blt)$ are finitely generated free over $\mathcal D(K)$;
            \item the $L^2$-Betti number $b^{(2)}_n (i^*C_\blt; \mathcal N(K))$ is finite and, in particular,
            \begin{align*}
                b^{(2)}_n (i^*C_\blt; \mathcal N(K)) &= \dim_{\mathcal D(K)} (H_n(\mathcal D(K)_t[u^{\pm 1}] \otimes_{\mathbb QG} C_\blt)) \\
                &= \dim_{\mathcal D(K)} (H_n(\mathcal D(K) \otimes_{\mathbb QK} i^*C_\blt)).
            \end{align*}
        \end{itemize} 
    \end{enumerate}
\end{thm}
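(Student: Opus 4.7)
The theorem breaks naturally into four parts, and my strategy is to treat them in sequence, building on the inclusion chain $\QQ G \subset \linn(G) \subset \mathcal{U}(G) \supset \neu(G)$ and using the Atiyah conjecture as the bridge between dimension integrality and algebraic non-vanishing statements.

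For parts (1) and (2), I would first argue that the Atiyah conjecture forces every nonzero element of $\QQ G$ to be a non-zero-divisor in $\mathcal{U}(G)$: a nontrivial kernel of right multiplication on $\ell^2(G)$ would have integer $\neu(G)$-dimension strictly between $0$ and $1$, a contradiction. Since $\mathcal{U}(G)$ is von Neumann regular (a classical consequence of $\neu(G)$ being semi-hereditary), non-zero-divisors coincide with units there, so every nonzero element of $\QQ G$ becomes invertible in $\mathcal{U}(G)$. By division-closedness this invertibility propagates to all of $\linn(G)$, making it a skew field. For part (2), the key tool is flatness of $\mathcal{U}(G)$ over $\neu(G)$, which lets homology commute with tensor extension; the compatibility of the $\mathcal{U}(G)$-extended dimension with the ordinary $\linn(G)$-dimension on Ore-extended modules then pins the $L^2$-Betti numbers to the $\linn(G)$-dimensions of the corresponding homology groups.

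The real work lies in part (3). The semidirect product presentation $G = K \rtimes_c \ZZ$ identifies $\QQ G$ with the skew Laurent polynomial ring $\QQ K_c[u^{\pm 1}]$, and extending coefficients from $\QQ K$ to $\linn(K)$ via the induced automorphism $t$ yields $\linn(K)_t[u^{\pm 1}]$. This ring is a noncommutative principal ideal domain, since the classical Euclidean division by $u$-degree works verbatim over any skew coefficient field, and the Ore condition on the set $T$ of nonzero elements is automatic for Noetherian domains. For the canonical isomorphism, I would extend the inclusion $\QQ G \inj \linn(G)$ to a ring map out of $T^{-1} \linn(K)_t[u^{\pm 1}]$ via the universal property of Ore localization, checking that each element of $T$ becomes invertible in $\linn(G)$, now known to be a skew field by (1). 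Injectivity is automatic since the source is a skew field; surjectivity follows because the image is a division-closed subring of $\mathcal{U}(G)$ containing $\QQ G$, so minimality of $\linn(G)$ forces equality.

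Finally, for part (4), I would tensor $C_\blt$ with $\linn(K)_t[u^{\pm 1}]$ and further localize to $\linn(G)$: by $L^2$-acyclicity and part (2), the homology of $\linn(G) \otimes_{\QQ G} C_\blt$ vanishes, which via the Ore identification of (3) forces the finitely generated $\linn(K)_t[u^{\pm 1}]$-modules $H_n(\linn(K)_t[u^{\pm 1}] \otimes_{\QQ G} C_\blt)$ to be pure torsion, since any free summand would survive localization. As a cyclic torsion module over a PID, $\linn(K)_t[u^{\pm 1}]/(f)$ is free of rank equal to the $u$-degree of $f$ as a $\linn(K)$-module, so each $H_n$ is finitely generated free over $\linn(K)$. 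The equality of dimensions with $b_n^{(2)}(i^* C_\blt; \neu(K))$ then follows by applying part (2) to the subgroup $K$. The main obstacle I anticipate is the careful bookkeeping of dimensions and freeness claims across the three rings $\neu(K)$, $\linn(K)$, and $\linn(K)_t[u^{\pm 1}]$, and in particular making the free-rank/torsion dichotomy completely clean over a noncommutative PID.
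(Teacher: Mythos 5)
The paper does not prove this result; it cites it directly from Friedl--L\"uck~\cite[Theorem~3.8]{l2thur}, so there is no in-paper proof to compare against. Evaluating your sketch on its own merits, the overall architecture is sensible, but part~(1) contains a genuine gap and parts~(3)--(4) omit some load-bearing steps.

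The main problem is in part~(1). You show, correctly, that every nonzero $a \in \QQ G$ is a unit in $\mathcal U(G)$: the $1\times 1$ case of the Atiyah conjecture rules out a kernel of fractional dimension, and von Neumann regularity of $\mathcal U(G)$ turns non-zero-divisors into units. You then conclude that \emph{"by division-closedness this invertibility propagates to all of $\linn(G)$, making it a skew field."} This does not follow. Division closure is an iterated construction: $\linn(G)$ contains, for example, sums $p^{-1}q + r^{-1}s$ and arbitrary polynomial expressions in previously adjoined inverses, and nothing in your argument shows that such elements, when nonzero, are units in $\mathcal U(G)$. Division-closedness only says that \emph{if} an element of $\linn(G)$ happens to be a unit in $\mathcal U(G)$, its inverse lies in $\linn(G)$; it does not assert that every nonzero element of $\linn(G)$ is such a unit. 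Establishing the latter is the real content of the theorem, and it requires the full matrix form of the Atiyah conjecture (not merely its $1\times 1$ specialization) together with a nontrivial argument about the rational closure of $\QQ G$ in $\mathcal U(G)$; see L\"uck's Lemma~10.39 and the surrounding development. As stated, your part~(1) proves only that $\QQ G$ embeds in $\mathcal U(G)$ as a domain of units, which is strictly weaker than $\linn(G)$ being a skew field.

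There are also smaller omissions worth flagging. In part~(3), the isomorphism $T^{-1}\linn(K)_t[u^{\pm 1}] \to \linn(G)$ requires that $\linn(K)$ actually sits inside $\linn(G)$ compatibly with the inclusion $K \hookrightarrow G$; you use this implicitly both to define the map and to obtain the reverse containment after invoking minimality, but it is a nontrivial compatibility property of the Linnell construction and should be stated. In part~(4), you silently pass between $H_n(\linn(K)_t[u^{\pm 1}] \otimes_{\QQ G} C_\blt)$ and $H_n(\linn(K) \otimes_{\QQ K} i^*C_\blt)$; the identification rests on $\QQ G$ being free as a $\QQ K$-module on $\{u^n\}_{n\in\ZZ}$, which should be made explicit. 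You also need to observe that $K$, as a subgroup of $G$, satisfies the Atiyah conjecture before applying part~(2) to it. These are fillable, but without them the chain of dimension equalities is not yet justified.
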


As anticipated, this theorem simplifies the treatment of the twisted $L^2$-Euler characteristic. Indeed, if $X$ is an $L^2$-acyclic finite connected CW complex, $G \coloneqq \pi_1(X)$ satisfies the Atiyah conjecture and $\phi$ is a primitive character, then all the $\phi$-twisted $L^2$-Betti numbers are integers, being dimensions of vector spaces over skew fields. By homogeneity, this also holds for non-primitive $\phi$; therefore, the twisted $L^2$-Euler characteristic $\chi^{(2)}(\til X; \mathcal N(G), \phi)$ is always an integer.

For this reason, we will assume that the Atiyah conjecture is satisfied for all the groups we consider in the remainder of this article.

\section{The polytope homomorphism}\label{sec:polyhom}
Having translated all our machinery in the language of skew fields, we will see how to unify all the twisted $L^2$-Euler characteristics for every character $\phi$ into a single object, by leveraging the \emph{Dieudonné determinant} for skew field square matrices.

Consider a finitely generated torsion-free group $G$ with a free abelian quotient $\nu\colon G \surj H$.

\begin{defin}
We call \emph{integral polytopes} in $H \otimes \RR$ the subsets obtained by taking convex hulls of finitely many points of $H$. Such polytopes form a commutative, cancellative monoid $\mathfrak P(H)$ under the Minkowski sum operation; we define the \emph{integral polytope group} $\mathcal P(H)$ to be the Grothendieck group of $\mathfrak P(H)$.
\end{defin} 

Elements of $\mathcal P(H)$ can be seen as formal differences of two polytopes, where $P-Q = P'-Q'$ if and only if $P+Q' = P'+Q$ in $\mathfrak P(H)$. Moreover, since $\mathfrak P(H)$ is cancellative, it is actually embedded in its Grothendieck group $\mathcal P(H)$ as the subset consisting of those differences that can be written in the form $P-0$.

The next step is a multidimensional generalization of Theorem~\ref{thm:DG}~(3), 
obtained with $\nu$ in place of $\phi$:

\begin{prop}[{\cite[(6.1)]{l2thur}}]
Let $\nu\colon G \surj H$ be a free abelian quotient as before, and let $K$ be its kernel. Then the crossed product ring $\linn(K) * H$ admits an Ore localization with respect to the set $T$ of its non-zero elements, and there is an isomorphism
\[
    T^{-1} (\linn(K) * H) \simeq \linn(G)
\]
of $\linn (K)$-modules.
\end{prop}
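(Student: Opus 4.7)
The plan is to proceed by induction on the rank $n = \rk(H)$, using Theorem~\ref{thm:DG}~(3) as both the base case and the engine of the inductive step.

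For $n = 1$, the statement reduces to Theorem~\ref{thm:DG}~(3) after identifying the crossed product $\linn(K) * \ZZ$ with the skew Laurent polynomial ring $\linn(K)_t[u^{\pm 1}]$: here $u$ is any lift of a generator of $\ZZ$ and $t$ is the induced conjugation automorphism.

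For the inductive step, write $H = H' \oplus \ZZ$ for some free abelian $H'$ of rank $n-1$ and consider the composite projection $\phi\colon G \xrightarrow{\nu} H \surj \ZZ$ onto the second summand. Let $K' \defeq \nu^{-1}(H')$, so that $K \subseteq K'$ with $K'/K \cong H'$ and $G/K' \cong \ZZ$. Applying Theorem~\ref{thm:DG}~(3) to $\phi$ and the inductive hypothesis to the surjection $K' \surj H'$ with kernel $K$ yields two isomorphisms:
\[
\linn(G) \cong T_1^{-1} \linn(K')_t[u^{\pm 1}], \qquad
\linn(K') \cong T_2^{-1}(\linn(K) * H').
\]

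Next I would identify $(\linn(K) * H')_t[u^{\pm 1}] \cong \linn(K) * H$ as rings, where on the left $t$ is the automorphism induced by conjugation by $u$, and on the right the crossed product structure extends the one on $\linn(K) * H'$ using a set-theoretic section $H \to G$. Combining this identification with the two isomorphisms above, and using that passing to a skew Laurent polynomial ring commutes with Ore localization in the coefficient ring, I obtain a composite Ore localization
\[
\linn(G) \cong T^{-1}(\linn(K) * H).
\]
Since $\linn(G)$ is a skew field, every non-zero element of $\linn(K) * H$ must become invertible in the localization, so we may enlarge $T$ to the full set of non-zero elements without changing the result.

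The main obstacle I anticipate is the careful identification $(\linn(K) * H')_t[u^{\pm 1}] \cong \linn(K) * H$, which requires matching the twist $t$ with the cocycles coming from the extension $1 \to K \to G \to H \to 1$ (which in general need not split). Once this bookkeeping is in place, the rest of the argument is a mechanical composition of two instances of Theorem~\ref{thm:DG}~(3), together with the formal fact that a composition of Ore localizations is an Ore localization.
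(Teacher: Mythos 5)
The paper does not actually prove this Proposition: it is stated with a citation to Friedl--Lück~\cite[(6.1)]{l2thur}, and the surrounding remark describes the crossed product concretely as an iterated skew Laurent polynomial ring $\linn(K)[u_1^{\pm1},\dots,u_r^{\pm1}]$. Your inductive argument is a reasonable and essentially correct way to establish the statement from the $r=1$ case supplied by Theorem~\ref{thm:DG}~(3), and it matches the paper's framing of the Proposition as a ``multidimensional generalization'' of that theorem.

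Two steps deserve more care than a one-line gesture, though neither is a fatal gap. First, the lemma you invoke as ``passing to a skew Laurent polynomial ring commutes with Ore localization'' should be stated and checked: if $R$ is a left Ore domain with quotient skew field $D$ and $\sigma\in\operatorname{Aut}(R)$ extends to $\bar\sigma\in\operatorname{Aut}(D)$, then $T_2 := R\setminus\{0\}$ is a left Ore set in $R_\sigma[u^{\pm1}]$ with $T_2^{-1}(R_\sigma[u^{\pm1}])\cong D_{\bar\sigma}[u^{\pm1}]$. The verification is short (clear coefficients of a Laurent polynomial over $D$ by a common left denominator in $R$, exploiting that left multiplication commutes with the grading) but, because the twist $\sigma$ enters, you should say explicitly that you use \emph{left} fractions here so that $s^{-1}\bigl(\sum_i a_iu^i\bigr) = \sum_i (s^{-1}a_i)u^i$ without any $\sigma$ appearing. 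Second, ``a composition of Ore localizations is an Ore localization'' is true but not entirely formal: given $R\subset T_2^{-1}R=R_1\subset T_1^{-1}R_1=R_2$, to show $R_2$ is a localization of $R$ at its nonzero elements you must rewrite right fractions $uw^{-1}$ as left fractions, which requires the Ore condition for $T_2$ in $R$ --- i.e.\ the inductive hypothesis applied to $\linn(K)*H'$, not just an abstract fact. Once that is observed, the set $S$ of elements of $\linn(K)*H$ that become invertible in $\linn(G)$ is an Ore set with $S^{-1}(\linn(K)*H)\cong\linn(G)$, and then injectivity into the skew field $\linn(G)$ forces $S = T$, as you note. Finally, you correctly flag the crossed-product bookkeeping $(\linn(K)*H')_t[u^{\pm1}]\cong\linn(K)*H$; since the extension $1\to K\to G\to H\to 1$ need not split, the crossed product carries a genuine $2$-cocycle, but choosing a set-theoretic section of the form $\sigma(h',n)=\sigma'(h')u^n$ with $\sigma'$ a section of $K'\surj H'$ and $u$ a fixed lift of $1\in\ZZ$ makes the identification work out; the resulting ring is independent of these choices up to isomorphism.
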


\begin{remark}
For our purposes, the above \emph{crossed product} can be thought of as a \emph{multivariate skew polynomial ring} $\linn(K)[u_1^{\pm 1}, \dots, u_r^{\pm 1}]$, where $\{u_1, \dots, u_r\}\subset G$ is a (fixed) lift of a basis for $H$. Elements are sums of monomials of shape $ku_1^{\alpha_1} \dots u_r^{\alpha_r}$,
or $k\midx{u}^\midx{\alpha}$ (with $\alpha \in H$) using multi-index notation. Multiplication is defined in a natural way as
\begin{equation}
    k\midx{u}^\midx{\alpha} \cdot h\midx{u}^\midx{\beta} \defeq
    k(\midx{u}^\midx{\alpha}h\midx{u}^{-\midx{\alpha}})
    \midx{u}^{\midx{\alpha}+\midx{\beta}}.
\end{equation}
Just like in Theorem~\ref{thm:DG}~(3), since $\midx{u}^\midx{\alpha}$ is an element of $G$, it induces an automorphism of $\linn(K)$ by conjugation, making the above definition sound. For a formal, general definition of crossed product, see e.g.~\cite[Section~10.3.2]{l2}.
\end{remark}

Owing to this, we will make extensive use of a polynomial-like notation \begin{equation}
    q(u_1,\dots,u_r)^{-1}p(u_1,\dots,u_r)
\end{equation}
for elements of the Linnell skew field $\linn(G)$.

Every non-zero polynomial $p \in \linn(K)[u_1^{\pm 1}, \dots, u_r^{\pm 1}]$ has a \emph{noncommutative Newton polytope} $P(p)$, given by the convex hull in $H \otimes \RR$ of all exponents $\alpha$ of its non-zero monomials. It can be proved~\cite[Section~6.1]{l2thur} that $P$ is independent of the choice of lifts $u_1, \dots, u_n$ and sends products to Minkowski sums, making it a monoid homomorphism
\begin{equation}
    P\colon (\linn(K)[u_1^{\pm 1}, \dots, u_r^{\pm 1}]\setminus \{0\}, \cdot)
        \to \polym(H).
\end{equation}
By localizing, this can be augmented to a group homomorphism
\begin{equation}
    P_\nu\colon \linn(G)^\times_\mathrm{ab} \to \poly(H),\heqsep P_\nu(q^{-1}p) \defeq P(p)-P(q),
\end{equation}
where we can abelianize the domain since the codomain is abelian.

Consider now a primitive character $\phi$ containing $K$ in its kernel; clearly, $\phi$ factors as $\omega \circ \nu$ for some $\omega \colon H \surj \ZZ$. Any $p \in \linn(K)[u_1^{\pm 1}, \dots, u_r^{\pm 1}]$ can, of course, be seen as a single-variable skew Laurent polynomial in $\linn(\ker \phi)[v^{\pm 1}]$ whose \emph{degree} (the difference between the greatest and the least exponents of $v$ in $p$) can be directly read off from $P(p)$.
In fact, every multivariate monomial $k\midx{u}^\midx{\alpha}$ contributes to the monomial $v^{\omega(\alpha)}$, and there can be no simplifications between terms with the same $\omega(\alpha)$, as they are still linearly independent over $\linn(K)$.

Therefore, the degree $\deg_\phi p$ of $p$, as a polynomial over $\linn(\ker \phi)$, is simply the \emph{thickness} of $P(p)$ along the $\omega$ direction. Since $P(p)$ is convex and compact, its thickness function is an integer-valued \emph{seminorm} on $\Hom(H, \ZZ)$, which can be extended by homogeneity and continuity to a full-fledged seminorm on $\Hom(H, \RR)$.

We can also define the degree of a rational element $q^{-1}p \in \linn(G)^\times$ as $\deg_\phi p - \deg_\phi q$. With an analogous reasoning, we get that the degree is the \emph{difference} of the two seminorms associated to $P(p)$ and $P(q)$.

\begin{remark}
    As the degree is a homomorphism to $\ZZ$, it is in fact well defined for elements of $\linn(G)^\times_\mathrm{ab}$.
\end{remark}
\begin{remark}
    The degree is independent of the quotient $\nu$. In fact, it is easy to see that taking a smaller quotient corresponds to restricting the seminorm to a subspace or to projecting the polytope onto a subspace. Therefore, we will generally consider $\nu = \ab \colon G \surj \ab(G)$, the largest free abelian quotient.
\end{remark}

\subsection{The Dieudonné determinant}\label{sec:dieudonne}
The determinant is a natural invariant of square matrices over commutative rings. Apart from its well-known desirable properties, such as multilinearity and multiplicativity, it can also capture some fine-grained information about endomorphisms over PIDs. As an example:
\begin{prop} \label{prop:degdet-c}
We have the following:
    \begin{itemize}
        \item[--] if $A\in \mathrm{M}(n, \ZZ)$ is invertible over $\QQ$, then $\coker A$ has cardinality $|{\det A}|$;
        \item[--] if $A\in \mathrm{M}(n, \mathbb K[x])$ is invertible over $\mathbb K(x)$, then $\coker A$ has $\mathbb K$-dimension $\deg {\det A}$.
    \end{itemize}
\end{prop}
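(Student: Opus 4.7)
The plan is to reduce both statements to the structure theorem for finitely generated modules over a PID, via the Smith normal form. Concretely, since both $\ZZ$ and $\mathbb K[x]$ are PIDs, for any $A \in \mathrm M(n, R)$ with $R$ either of these rings, there exist matrices $P, Q \in \mathrm{GL}(n, R)$ such that $PAQ = D$, where $D$ is diagonal with entries $d_1,\dots, d_n \in R$ satisfying the divisibility chain $d_1 \mid d_2 \mid \cdots \mid d_n$. Multiplication by $P$ and $Q$ is an $R$-automorphism of $R^n$, so it induces an $R$-module isomorphism $\coker A \simeq \coker D \simeq \bigoplus_{i=1}^n R/(d_i)$.

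The invertibility assumption over the fraction field $\mathbb Q$ or $\mathbb K(x)$ is used to guarantee that all $d_i$ are nonzero: indeed, $\det D = u\det A$ for some unit $u$ coming from $\det P \det Q$, so $\det D \neq 0$ and hence no $d_i$ vanishes. Thus each summand $R/(d_i)$ is finite (in the first case) or finite-dimensional over $\mathbb K$ (in the second).

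For the first bullet, $R = \ZZ$ and $\abs{R/(d_i)} = \abs{d_i}$, so by multiplicativity of cardinality over direct sums,
\[
    \abs{\coker A} = \prod_{i=1}^n \abs{d_i} = \abs{\det D} = \abs{\det A},
\]
since $\det P, \det Q \in \ZZ^\times = \{\pm 1\}$. For the second bullet, $R = \mathbb K[x]$ and $\dim_{\mathbb K} \mathbb K[x]/(d_i) = \deg d_i$, so
\[
    \dim_{\mathbb K} \coker A = \sum_{i=1}^n \deg d_i = \deg \prod_{i=1}^n d_i = \deg \det D = \deg \det A,
\]
where the last equality uses that $\det P, \det Q \in \mathbb K[x]^\times = \mathbb K^\times$ have degree $0$.

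There is no real obstacle here; the only thing to be careful about is checking that the invertibility hypothesis over the fraction field correctly rules out a zero diagonal entry (which would make the cokernel infinite or infinite-dimensional, and the determinant vanish), so that both sides of each equality are simultaneously finite.
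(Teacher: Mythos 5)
Your proof is correct and takes exactly the approach the paper indicates (the paper only remarks that ``both facts follow easily from the Smith normal form''); you have simply filled in the details via the Smith normal form decomposition $PAQ = D$, the resulting cokernel splitting $\bigoplus R/(d_i)$, and the unit bookkeeping for $\ZZ^\times = \{\pm 1\}$ versus $\mathbb K[x]^\times = \mathbb K^\times$.
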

Both facts follow easily from the Smith normal form and can be generalized extensively.

In the context of skew fields, noncommutativity makes it impossible to define a determinant with the sum-of-permutations approach, as multiplicativity would fail. A better definition starts with the \emph{Bruhat decomposition} of a matrix:
\begin{prop}[Bruhat decomposition,~{\cite[Theorem~9.2.2]{cohn}}]
    Let $A$ be an invertible square matrix over a skew field $F$. Then there is a decomposition $A = LDPU$, where $L$ and $U$ are lower and upper unitriangular, $D$ is diagonal and $P$ is a permutation matrix. Moreover, $D$ and $P$ are uniquely determined.
\end{prop}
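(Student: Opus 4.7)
The plan is to establish existence by a Gaussian-elimination argument on $A$ and uniqueness in two stages: first extracting $P$ from rank invariants of certain submatrices of $A$, and then pinning down $D$ by a direct computation.

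For existence, I will induct on the size $n$. The base case $n=1$ is immediate. For $n\ge 2$, invertibility forces the first column of $A$ to be nonzero, so I can let $i_1$ be the smallest row index with $A_{i_1,1}\ne 0$. Lower unitriangular row operations (adding left multiples of row $i_1$ to each row $j>i_1$) clear every entry below the pivot in column $1$, and upper unitriangular column operations (adding right multiples of column $1$ to columns $k>1$) clear every entry to the right of the pivot in row $i_1$. The resulting matrix has a single nonzero at $(i_1,1)$ with an invertible complementary $(n-1)\times(n-1)$ block; applying the inductive hypothesis to this block and re-embedding the factors by an identity row and column at position $i_1$ yields $A=LDPU$.

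For uniqueness of $P$, I will consider the rank invariant $r_{ij}(A)$ defined as the rank of the submatrix of $A$ with rows $i,\dots,n$ and columns $1,\dots,j$. Right multiplication by an upper unitriangular matrix only replaces each column by itself plus right-combinations of preceding ones, so it preserves the right span of the first $j$ columns; similarly left multiplication by a lower unitriangular matrix preserves the left span of the last $n-i+1$ rows. Hence $r_{ij}(A)=r_{ij}(DP)=|\{\ell\le j:\sigma(\ell)\ge i\}|$ where $\sigma$ is the permutation of $P$, and the differences $r_{i,\ell}(A)-r_{i,\ell-1}(A)\in\{0,1\}$ record precisely whether $\sigma(\ell)\ge i$, recovering $\sigma$ from $A$ alone. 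With $P$ fixed, equating $A=LDPU=L'D'PU'$ gives $MDP=D'PN$ with $M=(L')^{-1}L$ and $N=U'U^{-1}$ unitriangular; evaluating both sides at the entry $(\sigma(j),j)$ collapses the sums to $M_{\sigma(j),\sigma(j)}\,d_{\sigma(j)}=d'_{\sigma(j)}\,N_{jj}$, i.e.\ $d_{\sigma(j)}=d'_{\sigma(j)}$ for every $j$, forcing $D=D'$.

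The main obstacle I anticipate is the careful bookkeeping of skew-field noncommutativity: determinant-based shortcuts are unavailable, every elementary operation must place its scalar on the correct side, and I must check that the embedded inductive factors remain unitriangular in the full ordering on $\{1,\dots,n\}$ (which works because removing one row and one column preserves the order on the remaining indices).
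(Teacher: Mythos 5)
The paper merely cites this as Cohn's Theorem 9.2.2 and gives no proof, so I can only assess your argument on its own terms. Your existence argument and the uniqueness of $D$ once $P$ is fixed are both correct. However, the key lemma in your uniqueness argument for $P$ is false.

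You claim that left multiplication by a lower unitriangular matrix preserves the (left) span of rows $i,\dots,n$, and hence that the lower-left rank $r_{ij}(A)$ (rows $i,\dots,n$, columns $1,\dots,j$) is an invariant. This is not so: for $L$ lower unitriangular, row $k$ of $LA$ equals row $k$ of $A$ \emph{plus} a left-combination of rows $1,\dots,k-1$ of $A$, and for $k\ge i$ these contributing rows include indices strictly less than $i$, which lie outside the block you are restricting to. Concretely, take
\[
A = I_2, \qquad L = \begin{bmatrix} 1 & 0 \\ 1 & 1 \end{bmatrix},
\]
both of which have the trivial Bruhat data $D = P = I_2$; yet $r_{2,1}(A) = \operatorname{rank}(0) = 0$ while $r_{2,1}(LA) = r_{2,1}(L) = \operatorname{rank}(1) = 1$. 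So $r_{ij}$ does not depend only on $P$, and your recovery of $\sigma$ from the differences $r_{i,\ell}-r_{i,\ell-1}$ breaks down.

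The correct invariant for the decomposition $A = LDPU$ with $L$ lower and $U$ upper unitriangular is the rank $r^{\mathrm{UL}}_{ij}(A)$ of the \emph{upper-left} submatrix with rows $1,\dots,i$ and columns $1,\dots,j$. Right multiplication by upper unitriangular $U$ preserves the right span of the first $j$ columns, and, dually, left multiplication by lower unitriangular $L$ replaces each row $k$ by itself plus left-combinations of rows $\ell<k$, hence preserves the left span of the \emph{first} $i$ rows. With $P_{k,\ell}=\delta_{k,\sigma(\ell)}$ one then gets $r^{\mathrm{UL}}_{ij}(A) = r^{\mathrm{UL}}_{ij}(DP) = |\{\ell\le j:\sigma(\ell)\le i\}|$, and $\sigma(j) = \min\{i : r^{\mathrm{UL}}_{i,j}(A) - r^{\mathrm{UL}}_{i,j-1}(A) = 1\}$ recovers $\sigma$, after which your computation $d_{\sigma(j)} = d'_{\sigma(j)}$ from the entry $(\sigma(j),j)$ of $MDP = D'PN$ finishes the proof. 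So the structure of your argument is salvageable, but as written the invariance claim is wrong and needs to be replaced by its upper-left counterpart.
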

Now we can define the \emph{Dieudonné determinant}
\begin{equation}
    \det A \defeq \hspace{0.1em}\operatorname{sign}(P)\cdot d_1 \dots d_n \!\!\mod [F^\times,F^\times] \in F^\times_\mathrm{ab},
\end{equation}
where $\operatorname{sign}(P)$ is the sign of the corresponding permutation, and $D = \operatorname{diag}(d_1,\dots,d_n)$.

The Dieudonné determinant is the unique map $\det \colon \mathrm{GL}(n,F) \to F^\times_\mathrm{ab}$ such that:
\begin{enumerate}[label=(\arabic*)]
    \item $\det AB = \det A \det B$ for $A, B\in \mathrm{GL}(n,F)$;
    \item if $E = I_n + cE_{ij}$ with $i \ne j$ is an elementary matrix, then $\det E = 1$;
    \item $\det \operatorname{diag}(d_1,\dots,d_n) = d_1 \dots d_n \!\!\mod [F^\times,F^\times]$.
\end{enumerate}
Of course, these properties (see~\cite[Section~3.1]{oki}) are enabled by descending to the coarser group $F^\times_\mathrm{ab}$. 

Let us now retrace our steps by asking if there is a noncommutative version of Proposition~\ref{prop:degdet-c}. Indeed, we can consider the determinant of a suitable matrix $A$, apply the degree function we defined earlier and relate the result to the cokernel of $A$.

\begin{prop} \label{prop:degdet-nc}
    Given a free abelian quotient $\nu \colon G \surj H$ with kernel $K$, let $A$ be a square matrix with coefficients in $\linn(K)[u_1^{\pm 1}, \dots, u_r^{\pm 1}]$, invertible over $\linn(G)$. Let $\phi = \omega \circ \nu$ be a primitive character with kernel $K_\phi$. Then
    \[
        \dim_{\linn(K_\phi)} \coker A = \deg_\phi \det A.
    \]
\end{prop}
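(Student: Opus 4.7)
The plan is to reduce $A$ to Smith normal form over a noncommutative PID. Setting $R \defeq \linn(K)[u_1^{\pm 1}, \dots, u_r^{\pm 1}]$ and choosing a basis of $H$ in which $\omega$ is the projection onto the last coordinate, I view $A$ as a matrix over the larger ring $R_\phi \defeq \linn(K_\phi)_t[u_r^{\pm 1}]$: by Theorem~\ref{thm:DG}~(3) applied to $\phi$, this ring is a noncommutative PID whose Ore localization is $\linn(G)$, and the $u_r$-degree of a nonzero element of $R_\phi$ (extending the Newton polytope thickness from $R$, and using that elements of $\linn(K_\phi)$ have $\deg_\phi = 0$) coincides with $\deg_\phi$. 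Since $R_\phi$ is a domain and $A$ becomes invertible upon localizing, $A\colon R_\phi^n \to R_\phi^n$ is injective with finitely generated torsion cokernel, and the PID structure permits a Smith-type reduction $M A N = \operatorname{diag}(d_1, \dots, d_n)$ with $M, N \in \mathrm{GL}(n, R_\phi)$ realized as products of elementary matrices and diagonal matrices with entries in $R_\phi^\times$, and each $d_i \in R_\phi \setminus \{0\}$. Hence $\coker A \cong \bigoplus_i R_\phi / R_\phi d_i$ as left $R_\phi$-modules, and hence as left $\linn(K_\phi)$-modules.

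The heart of the proof is to establish that $\dim_{\linn(K_\phi)} R_\phi / R_\phi d = \deg_\phi d$ for each nonzero $d$. Normalizing $d$ by left multiplication by a unit $c\,u_r^k \in R_\phi^\times$---harmless since it preserves $R_\phi d$ and has trivial $\deg_\phi$---one may assume $d = u_r^m + c_{m-1} u_r^{m-1} + \dots + c_0$ is monic of $u_r$-degree $m = \deg_\phi d$ with $c_0 \ne 0$. I would then verify that $\{1, u_r, \dots, u_r^{m-1}\}$ is a $\linn(K_\phi)$-basis of the quotient: spanning follows by inductively reducing higher powers $u_r^k$ (for $k \ge m$) via $u_r^m \equiv -\sum_{j<m} c_j u_r^j$, and negative powers $u_r^{-k}$ by iteratively left-multiplying by $u_r^{-1}$ the relation $1 \equiv -\sum_{j\ge 1} c_0^{-1} c_j u_r^j$; linear independence follows by comparing the top and bottom $u_r$-exponents in any alleged identity $\sum_{j<m} a_j u_r^j = r\,d$ in $R_\phi$. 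Summing over $i$ gives $\dim_{\linn(K_\phi)} \coker A = \sum_i \deg_\phi d_i$.

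Finally, multiplicativity of the Dieudonné determinant yields $\det A \equiv (\det M)^{-1} \prod_i d_i \, (\det N)^{-1}$ in $\linn(G)^\times_{\mathrm{ab}}$. Since $M$ and $N$ are assembled from elementary matrices (with trivial Dieudonné determinant by property~(2) of Section~\ref{sec:dieudonne}) and diagonal matrices with entries in $R_\phi^\times = \{c\,u_r^k\}$ (each of trivial $\deg_\phi$), the contributions of $\det M$ and $\det N$ vanish upon applying $\deg_\phi$, so $\deg_\phi \det A = \sum_i \deg_\phi d_i = \dim_{\linn(K_\phi)} \coker A$. The main obstacle is the middle step: left-versus-right multiplication requires careful bookkeeping in the noncommutative Laurent ring, and one must confirm that the Smith reduction over $R_\phi$ can be realized using only elementary and monomial-diagonal operations on both sides, so that no extra factors of nontrivial $\deg_\phi$ sneak into $\det M$ or $\det N$.
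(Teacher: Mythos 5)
Your proof is correct and follows essentially the same route as the paper, which also reduces $A$ to Smith normal form over the noncommutative PID $\linn(K_\phi)[v^{\pm 1}]$ (the paper defers the details to Friedl--L\"uck's Lemma~6.16). The one worry you flag at the end — whether the reduction can be realized purely by elementary and monomial-diagonal moves — is not an actual gap: $\linn(K_\phi)[v^{\pm 1}]$ becomes a two-sided Euclidean domain after left-multiplying rows by monomials to clear negative exponents, and the usual Euclidean-algorithm Smith reduction uses only elementary row/column operations, transpositions (which factor into elementary matrices times a diagonal sign matrix), and diagonal unit matrices; permutations contribute sign $\pm 1$ and units contribute monomials $c\,u_r^k$, all of which are $\deg_\phi$-trivial. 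Your normalization of $d_i$ to a monic polynomial by a left unit is also harmless, since $R_\phi u = R_\phi$ for a unit $u$ fixes the left ideal $R_\phi d_i$ and leaves $\deg_\phi d_i$ unchanged. The only presentational difference from the paper is that you prove the key dimension count $\dim_{\linn(K_\phi)} R_\phi/R_\phi d = \deg_\phi d$ directly by exhibiting a basis and arguing by span comparison, whereas the paper simply points to the commutative analogy.
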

\begin{proof}
This is essentially proved in~\cite[Lemma~6.16]{l2thur}. The main idea is to consider $A$ as a matrix over $\linn(K_\phi)[v^{\pm 1}]$ (a noncommutative PID), then bring it into the Smith normal form with elementary moves, just like in the commutative case.
\end{proof}
\begin{remark}
This argument also proves that $\det A$ has a representative in $\linn(K_\phi)[v^{\pm 1}]$: the Dieudonné determinant is the product of the diagonal entries in the Smith normal form, up to left and right multiplication by monomials that accumulate during the diagonalization process.
\end{remark}

Comparing Proposition~\ref{prop:degdet-nc} with Theorem~\ref{thm:DG}, we get a tentative first step toward the computation of the twisted $L^2$-Euler characteristic. Indeed, let $H = \ab(G) \simeq \ZZ^r$ and $K = \ker \ab$ and let $C_\blt$ be the following $\ZZ G$-chain complex:
\begin{equation}
\begin{tikzcd}
    \dots \arrow[r] & 0 \arrow[r] & C_1 \arrow[r, "A"] & C_0 \arrow[r] & 0
\end{tikzcd}
\end{equation}
with $A \in \mathrm{M}(n, \ZZ G)$ invertible over $\linn(G)$. If $\phi$ is a primitive character, we can consider the twisted $L^2$-Betti numbers
\begin{align}
b^{(2)}_0(C_\blt; \mathcal N(G), \phi) &=
    \dim_{\linn(K_\phi)} H_0(\linn(K_\phi)[v^{\pm}] \otimes_{\QQ G} C_\blt)
\\     &= \dim_{\linn(K_\phi)} \coker (\linn(K_\phi)[v^{\pm}] \otimes_{\QQ G} A)
\\ b^{(2)}_1(C_\blt; \mathcal N(G), \phi) &=
    \dim_{\linn(K_\phi)} H_1(\linn(K_\phi)[v^{\pm}] \otimes_{\QQ G} C_\blt)
\\     &= \dim_{\linn(K_\phi)} \ker (\linn(K_\phi)[v^{\pm}] \otimes_{\QQ G} A)
\end{align}
Since $A$ is injective over $\linn(G)$, it must be so over $\linn(K_\phi)[v^{\pm}]$, so $b^{(2)}_1(C_\blt; \mathcal N(G), \phi) = 0$. It follows that
\begin{equation}\label{eq:chi-el}
    \chi^{(2)}(C_\blt; \mathcal N(G), \phi) = b^{(2)}_0(C_\blt; \mathcal N(G), \phi) = \deg_\phi \det A.
\end{equation}

Therefore, in this toy example, we see that the twisted $L^2$-Euler characteristic is a rather concrete, combinatorial object, being the difference of two polyhedral seminorms; when extended to the real vector space $\Hom(G, \RR)$, it is also a continuous function. In the following section, we will see that this reasoning can be extended to \emph{all} $L^2$-acyclic free $\ZZ G$-chain complexes.

\section{Universal \texorpdfstring{$L^2$-torsion}{L\^2-torsion}}
Our aim for this section is to define some sort of matrix-valued invariant for $\ZZ G$-chain complexes in order to carry out the above plan; again, $G$ will be a finitely generated torsion-free group. We will mostly follow Friedl and Luck's article~\cite{uni-l2}.

\begin{defin}\label{def:weak-iso}
    An endomorphism $A \in \mathrm{M}(n, \ZZ G)$ is called a \emph{weak isomorphism} when \[\id_{\mathcal N(G)} \otimes_{\ZZ G} A \colon \ell^2(G)^n \to \ell^2(G)^n\] is an injective operator with dense image.
\end{defin}

\begin{remark}\label{rmk:weak-iso-DG}
    Since we are taking $G$ to satisfy the Atiyah conjecture, the property of being a weak isomorphism can be replaced by the more intuitive \emph{invertibility over $\linn(G)$}. This is ultimately a consequence of~\cite[Theorem~6.24]{l2} (see also~\cite[Lemma~1.21]{uni-l2}, where it is applied).
\end{remark}

\begin{defin}\label{def:weak-k1}
    The \emph{weak $K_1$ group} $K_1^w(\ZZ G)$ is the abelian group generated by all weak isomorphisms (with coefficients in $\ZZ G$), subject to the relations:
    \begin{enumerate}[label=(\arabic*)]
        \item $g \circ f = f + g$ for all $f,g$ compatible weak isomorphisms;
        \item $\begin{bmatrix} f & h \\ 0 & g \end{bmatrix} = f+g$
            for all $f,g$ weak isomorphisms and $h$ compatible $\ZZ G$-matrices.
    \end{enumerate}
\end{defin}

\begin{defin}\label{def:weak-whitehead}
    The \emph{weak Whitehead group} $\mathrm{Wh}^w(\ZZ G)$ is the quotient of $K_1^w(\ZZ G)$ by the subgroup generated by all the $1\times 1$ matrices $\begin{bmatrix} \pm g \end{bmatrix}$ for $g \in G$. We also define a finer quotient $\til{K}_1^w(\ZZ G) \defeq K_1^w(\ZZ G) / \langle -\id_{\ZZ G} \rangle$.
\end{defin}

\begin{defin}
    Let $C_\blt$ be a $\ZZ G$-chain complex with differentials $c_\blt$. A \emph{weak chain contraction} for $C_\blt$ is a pair $(u_\blt, \gamma_\blt)$, where:
    \begin{itemize}
        \item $u_n \colon C_n \to C_n$ is a weak isomorphism for all $n$;
        \item $\gamma_n \colon C_n \to C_{n+1}$ is a null homotopy, that is, $\gamma_n \circ u_n = u_{n+1}\circ \gamma_n$.
    \end{itemize}
\end{defin}

In the case of a free $\ZZ G$-chain complex $C_\blt$, the differentials $c_n$ can be thought of as $\ZZ G$-matrices acting by right multiplication, and have \emph{adjoints} $c_n^*$ induced by transposition and by the $\ZZ$-linear self-map of $\ZZ G$ sending $g$ to $g^{-1}$ (called the \emph{antipode} or \emph{coinverse} in the context of Hopf algebras). We can employ this construction to define the \emph{combinatorial Laplacian}, 
taking inspiration from the Laplacian of the de Rham complex:
\begin{equation}
    \Delta_n \coloneqq c_n^*c_n + c_{n+1}c_{n+1}^* \colon C_n \to C_n.
\end{equation}

It is easy to verify that $c_{\blt +1}$ is a null homotopy for $\Delta_\blt$. Indeed, since $c_\blt c_{\blt+1} = 0$, we have 
\begin{align}
    \Delta_n c_{n+1} &=
    (c_n^*c_n + c_{n+1}c_{n+1}^*)c_{n+1}
    \\ &= c_{n+1}c_{n+1}^*c_{n+1}
    \\ &= c_{n+1}(c_{n+1}^*c_{n+1} + c_{n+2}c_{n+2}^*)
    \\ &= c_{n+1}\Delta_{n+1}.
\end{align}
Hence, it is natural to ask if the combinatorial Laplacian is a weak isomorphism, making $(\Delta_\blt, c_{\blt+1})$ a weak chain contraction. 
By~\cite[Lemma~1.5]{uni-l2}, this is the case if and only if $C_\blt$ is $L^2$-acyclic. 

Now, let $C_\blt$ be a finitely generated free $L^2$-acyclic $\ZZ G$-chain complex such that $C_n \ne 0$ for finitely many $n$, and fix an unordered $\ZZ G$-basis for each $C_n$ (we will say that $C_\blt$ is \emph{finite based free $L^2$-acyclic}). If we define
\begin{equation}
    C_\mathrm{even} \defeq \bigoplus_{2\mid n} C_n, \heqsep C_\mathrm{odd} \defeq \bigoplus_{2\nmid n} C_n,
\end{equation}
the two modules inherit unordered bases, which are of the same cardinality by the $L^2$-acyclicity of $C_\blt$: let $b \colon C_\mathrm{even} \to C_\mathrm{odd}$ be an isomorphism induced by a bijection of the two bases.
If $f\colon C_\mathrm{odd} \to C_\mathrm{even}$ is a $\ZZ G$-homomorphism, then $[b\circ f]$ is a well-defined class in $\til{K}_1^w(\ZZ G)$, independent of the choice of bijection. Of course, it is necessary to quotient by the subgroup $\langle -\id_{\ZZ G} \rangle$ because the bases are unordered.

Now we can define the \emph{universal $L^2$-torsion} of $C_\blt$ as
\begin{equation}
    \rho_u^{(2)}(C_\blt) \defeq [(b\circ (\Delta c + c^*))] - [\Delta] \in \til{K}_1^w(\ZZ G),
\end{equation}
where $\Delta c + c^*$ and $\Delta$ are seen respectively as maps $C_\mathrm{odd} \to C_\mathrm{even}$ and $C_\mathrm{odd} \to C_\mathrm{odd}$.

On a more practical note, the interplay between even and odd degrees in the first term translates to large, unwieldy $\ZZ G$-matrices. Luckily, it is possible to express this invariant solely in terms of the Laplacians.

\begin{prop}[{\cite[Lemma~1.17]{uni-l2}}]%
\label{prop:altsum-lapl}
    Let $C_\blt$ be a finite based free $L^2$-acyclic $\ZZ G$-chain complex. Then its combinatorial Laplacians $\Delta_n$ are weak isomorphisms and
    \[
        \rho_u^{(2)}(C_\blt) + *(\rho_u^{(2)}(C_\blt))
        = -\sum_{n\ge 0} (-1)^n \cdot n \cdot [\Delta_n],
    \]
    where $* \colon \til{K}_1^w(\ZZ G) \to \til{K}_1^w(\ZZ G)$ is the involution induced by adjunction of $\ZZ G$-matrices.
\end{prop}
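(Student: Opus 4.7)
The plan is to apply the involution $*$ directly to the definition $\rho_u^{(2)}(C_\blt) = [b\circ(\Delta c + c^*)] - [\Delta]$ and simplify in the abelian group $\til{K}_1^w(\ZZ G)$. Since $\Delta$ is self-adjoint, the subtracted term simply doubles into $-2[\Delta|_{C_\mathrm{odd}}]$. For the remaining summand, writing $f\colon C_\mathrm{odd}\to C_\mathrm{even}$ for $\Delta c + c^*$ and using that $b$ is a basis bijection (whence $b^* = b^{-1}$), the composition relation~(1) of Definition~\ref{def:weak-k1} yields
\[
    [b\circ f] + *[b\circ f] = [b\circ f \circ (b\circ f)^*] = [b\, ff^*\, b^{-1}] = [ff^*],
\]
where the last equality follows from $[b] + [b^{-1}] = [\id] = 0$. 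Thus the goal reduces to showing that $[ff^*] - 2[\Delta|_{C_\mathrm{odd}}] = -\sum_{n\ge 0}(-1)^n n\,[\Delta_n]$.

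The next step is to compute $ff^*$ explicitly. Using $c^2 = 0$, $(c^*)^2 = 0$, and the identity $\Delta c = cc^*c$ (which follows from $\Delta = c^*c + cc^*$), a short expansion gives $ff^* = (cc^*)^3 + c^*c$ on $C_\mathrm{even}$. This is block-diagonal: on each $C_n$ with $n$ even, the restriction equals $B_n^3 + A_n$, where $A_n \defeq c_n^* c_n$ and $B_n \defeq c_{n+1}c_{n+1}^*$. The orthogonality $c_n \circ c_{n+1} = 0$ (and its adjoint) forces $A_nB_n = B_nA_n = 0$, so
\[
    \Delta_n^k = (A_n+B_n)^k = A_n^k + B_n^k \quad \text{for every } k \ge 1.
\]

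The key algebraic step exploits the ``companion'' factorization
\[
    \Delta_n = \begin{pmatrix} c_n^* & c_{n+1} \end{pmatrix}\begin{pmatrix} c_n \\ c_{n+1}^* \end{pmatrix}, \qquad
    \begin{pmatrix} c_n \\ c_{n+1}^* \end{pmatrix}\begin{pmatrix} c_n^* & c_{n+1} \end{pmatrix} = \begin{pmatrix} c_nc_n^* & 0 \\ 0 & c_{n+1}^*c_{n+1} \end{pmatrix},
\]
which, via the trace-type identity $[MN] = [NM]$ in $K_1^w$ and relation~(2), yields the decomposition $[\Delta_n] = [B_{n-1}] + [A_{n+1}]$ (to be interpreted on the non-degenerate summands). $L^2$-acyclicity provides an essential decomposition of each $C_n$ on which $A_n, B_n$ act as weak isomorphisms on complementary subspaces; writing $\alpha_n, \beta_n$ for the corresponding classes, one has $\alpha_n = \beta_{n+1}$ and $\alpha_n + \beta_n = [\Delta_n]$, allowing us to express each $\alpha_n, \beta_n$ as an alternating sum of the $[\Delta_k]$'s. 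On this decomposition, $[B_n^3 + A_n] = 3\alpha_n + \beta_n$; substituting the recursion and combining with $-2[\Delta|_{C_\mathrm{odd}}]$, the expected identity $-\sum_n(-1)^n n\,[\Delta_n]$ emerges after re-indexing.

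The hard part will be that the individual operators $A_n$ and $B_n$ are generally \emph{not} weak isomorphisms on the full $C_n$: their kernels carry the $L^2$-trivial homology inherited from adjacent degrees. The classes $\alpha_n, \beta_n$ are thus only morally defined as classes of the non-degenerate restrictions, and making them rigorous requires a Schanuel-style stabilization---augmenting the matrix factorization by identity blocks to ensure weak isomorphy before invoking the trace identity in $K_1^w$, then cancelling in $\til{K}_1^w$. Once this bookkeeping is in place, the residual combinatorial identity among integer-weighted $[\Delta_n]$'s can be verified by induction on the length of $C_\blt$.
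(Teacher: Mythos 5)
The paper does not prove this proposition; it cites it to Friedl--Lück~\cite[Lemma~1.17]{uni-l2}. Your attempt is therefore a fresh argument, and while the first half is correct and clean, the second half has a genuine gap that you yourself flag but then understate.

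\textbf{What works.} Your reduction $\rho_u^{(2)} + *\rho_u^{(2)} = [ff^*] - 2[\Delta|_{C_\mathrm{odd}}]$ is correct: $\Delta$ is self-adjoint, $b$ is an orthogonal permutation matrix so $b^* = b^{-1}$ and $[b]+[b^{-1}] = 0$, and relation~(1) of Definition~\ref{def:weak-k1} gives $[bf] + [(bf)^*] = [bff^*b^{-1}] = [ff^*]$. The block computation $ff^*|_{C_n} = B_n^3 + A_n$ (with $A_n = c_n^*c_n$, $B_n = c_{n+1}c_{n+1}^*$) is also right, using $c^2 = 0$, $A_nB_n = B_nA_n = 0$, and the chain-map/null-homotopy relations.

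\textbf{Where it breaks.} The ``companion'' step does not produce a $K_1^w$ identity. Writing $\psi_n = \binom{c_n}{c_{n+1}^*}\colon C_n \to C_{n-1}\oplus C_{n+1}$ so that $\psi_n^*\psi_n = \Delta_n$ and $\psi_n\psi_n^* = \operatorname{diag}(B_{n-1}, A_{n+1})$, the matrix $\psi_n$ is rectangular for interior $n$ (generically $\dim C_n \ne \dim C_{n-1} + \dim C_{n+1}$), and $\psi_n\psi_n^*$ has a kernel of positive $\mathcal N(G)$-dimension, hence is \emph{not} a weak isomorphism. The trace identity $[MN] = [NM]$ you invoke is only available in $K_1^w(\ZZ G)$ when both products are weak isomorphisms of the same square size; here neither holds. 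Consequently there is no relation $[\Delta_n] = [B_{n-1}] + [A_{n+1}]$ in $K_1^w$, and the classes $\alpha_n, \beta_n$ you introduce for the ``non-degenerate restrictions'' of $A_n, B_n$ are not defined in $K_1^w(\ZZ G)$: the supporting submodules are at best projective over $\mathcal D(G)$ and are certainly not based free $\ZZ G$-modules, which is what $K_1^w$ requires. This is not ``bookkeeping'' or ``Schanuel-style stabilization'' to be filled in later; it is exactly the substance of the proof, and as written the argument stops where the real work begins. (Your relation $\alpha_n = \beta_{n+1}$ also has the indices backwards under your stated conventions $A_n = c_n^*c_n$, $B_n = c_{n+1}c_{n+1}^*$; the singular-value matching is between $A_{n+1}$ and $B_n$.) A correct proof must stay entirely at the level of genuine weak isomorphisms of based free modules---for instance by telescoping with operators of the form $c + c^*$ restricted to carefully chosen subcomplexes---rather than attempting to split each $\Delta_n$ individually.
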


The importance of this invariant stems from a universal property that makes it, in a sense, the \emph{most general} invariant for the chain complexes of our interest.
\begin{defin}
    We say that a short exact sequence $0 \xrightarrow{} M_0 \xrightarrow{i} M_1 \xrightarrow{p} M_2 \xrightarrow{} 0$ of finite based free $\ZZ G$-modules, with bases $B_0, B_1, B_2$, is \emph{based exact} if $i(B_0) \subseteq B_1$ and $p$ induces a bijection between $B_1 \setminus i(B_0)$ and $B_2$. Analogously, we define \emph{short based exact sequences} of finite based free $\ZZ G$-chain complexes.
\end{defin}

\begin{defin}
    An \emph{additive $L^2$-torsion invariant} is a map defined on all finite based free $L^2$-acyclic $\ZZ G$-chain complexes, with values in an abelian group, such that:
    \begin{itemize}
        \item if $0 \to C_\blt \to D_\blt \to E_\blt \to 0$ is short based exact, then $a(D_\blt) = a(C_\blt) + a(E_\blt)$;
        \item $a(\dots \xrightarrow{} 0 \xrightarrow{} \ZZ G \xrightarrow{\pm \id} \ZZ G \xrightarrow{} 0) = 0$.
    \end{itemize}
\end{defin}

\begin{thm}[{\cite[Remark~1.16]{uni-l2}}]\label{thm:univ-l2-univ}
    The universal $L^2$-torsion $\rho^{(2)}_u$ is a $\til{K}_1^w(\ZZ G)$-valued additive $L^2$-torsion invariant. Moreover, any additive $L^2$-torsion invariant uniquely factors through $\rho^{(2)}_u$.
\end{thm}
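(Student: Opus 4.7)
The plan is to verify in turn that $\rho_u^{(2)}$ is itself an additive $L^2$-torsion invariant, and then to construct the required factorization for an arbitrary such invariant $a$ through $\rho_u^{(2)}$, exploiting the presentation of $\til K_1^w(\ZZ G)$ by classes of weak isomorphisms.

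First, for the vanishing axiom applied to the complex $\ZZ G \xrightarrow{\pm \id} \ZZ G$ concentrated in degrees $1$ and $0$, one checks by direct calculation that all combinatorial Laplacians become $\id$, that $b$ can be chosen to be $\id$, and that $[b\circ(\Delta c + c^*)]$ and $[\Delta]$ yield a cancellation in $\til K_1^w(\ZZ G)$, with any residual sign absorbed by the quotient by $\langle -\id_{\ZZ G}\rangle$. For additivity on a short based exact sequence $0 \to C_\blt \to D_\blt \to E_\blt \to 0$, the compatibility of the bases forces each differential $d_n^D$ to be block lower-triangular with diagonal blocks $c_n^C$ and $c_n^E$; the same then holds, up to taking adjoints, for all chain-level maps entering the definition of $\rho_u^{(2)}$. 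Choosing $b^D$ to respect the partition of the bases and invoking relation (2) in Definition~\ref{def:weak-k1} splits each relevant class into the sum of the corresponding classes for $C_\blt$ and $E_\blt$, giving additivity. The main technical obstacle at this stage is to control the off-diagonal contributions appearing in $\Delta^D$ and in $(\Delta c + c^*)^D$, which mix the two summands; this is where one leans on Proposition~\ref{prop:altsum-lapl} and the involution $*$ to bookkeep the various cross terms.

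For the universal property, given any additive $L^2$-torsion invariant $a$ valued in an abelian group $A$, I would define $\overline a \colon \til K_1^w(\ZZ G) \to A$ on a generator $[f]$, with $f$ a weak isomorphism on $\ZZ G^n$, by $\overline a([f]) \defeq a(C^f_\blt)$, where $C^f_\blt$ is the two-term complex $\ZZ G^n \xrightarrow{f} \ZZ G^n$ concentrated in degrees $1$ and $0$ with the standard bases. To see that $\overline a$ is well-defined, I would realize each defining relation of $\til K_1^w(\ZZ G)$ as a short based exact sequence of two-term complexes: relation (1), $[g \circ f] = [f] + [g]$, via $0 \to C^f_\blt \to C^{g\circ f}_\blt \to C^g_\blt \to 0$; relation (2) via the analogous block-triangular extension; and the quotient by $\langle -\id_{\ZZ G}\rangle$ is exactly the content of the second axiom of additivity. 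A short direct computation on two-term complexes shows $\rho_u^{(2)}(C^f_\blt) = [f]$, which forces uniqueness of $\overline a$ and yields the agreement $\overline a \circ \rho_u^{(2)} = a$ on every two-term complex. Extending this equality to all finite based free $L^2$-acyclic complexes is the concluding (and hardest) step: one exhibits an arbitrary such $C_\blt$ as an iterated extension of two-term pieces built from its Laplacians and from $\Delta c + c^*$, again in the spirit of Proposition~\ref{prop:altsum-lapl}, and compares $a$ with $\overline a \circ \rho_u^{(2)}$ along the way, using that both are additive and coincide on the two-term pieces.
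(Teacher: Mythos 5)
The paper does not prove this theorem itself; it cites~\cite[Remark~1.16]{uni-l2}, so there is no in-paper proof to compare against. I will therefore assess your argument on its own merits, and there are two concrete gaps.

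First, the additivity of $\rho_u^{(2)}$ does not follow from block-triangularity of $\Delta^D$, because $\Delta^D$ is not block-triangular. If, in bases compatible with the based short exact sequence, $c^D = \begin{pmatrix} c^C & h \\ 0 & c^E \end{pmatrix}$, then $(c^D)^* = \begin{pmatrix} (c^C)^* & 0 \\ h^* & (c^E)^* \end{pmatrix}$, and so $\Delta^D = (c^D)^*c^D + c^D(c^D)^*$ has non-zero off-diagonal blocks on \emph{both} sides of the diagonal. Relation~(2) of Definition~\ref{def:weak-k1} only applies to triangular matrices, so $[\Delta^D]$ does not split as $[\Delta^C]+[\Delta^E]$ by that relation, and appealing to Proposition~\ref{prop:altsum-lapl} (a statement \emph{about} $\rho_u^{(2)}$, not a tool for proving its additivity) does not repair this. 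The standard route in~\cite{uni-l2} is to first prove independence of $\rho_u^{(2)}$ from the choice of weak chain contraction, then choose a contraction of $D_\blt$ that is genuinely filtration-compatible (block upper-triangular), rather than the Laplacian one, which cannot be made compatible. Without that independence lemma, your argument has a hole exactly where you flag the ``main technical obstacle.''

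Second, your realization of relation~(1) as a based short exact sequence $0 \to C^f_\blt \to C^{g\circ f}_\blt \to C^g_\blt \to 0$ cannot be correct. All three complexes have the same ranks ($\ZZ G^n$ in both degrees), so a basis-preserving inclusion $C^f_\blt \hookrightarrow C^{g\circ f}_\blt$ would have to be an isomorphism, forcing $C^g_\blt = 0$. Proving $a(C^{gf}_\blt)=a(C^f_\blt)+a(C^g_\blt)$ from additivity alone needs a different construction (and, incidentally, this is precisely why~(1) and~(2) are listed as independent relations in the definition of $K_1^w$). Your treatment of relation~(2) via the triangular extension, and the computation $\rho_u^{(2)}(C^f_\blt)=[f]$ (from $[ff^*f]-[f^*f]=[f]$), are both correct; the vanishing axiom check is also fine. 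The concluding step, expressing an arbitrary $C_\blt$ as an iterated extension of two-term complexes, is the right target but is left entirely sketched; a clean way to close it is to observe that $a - \overline a\circ\rho_u^{(2)}$ is an additive invariant vanishing on two-term complexes and then do an explicit dévissage, but that dévissage still has to be written down.
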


In fact, the twisted $L^2$-Euler characteristic is also an additive $L^2$-torsion invariant with values in the group of set-maps $\Hom(G, \ZZ) \to \ZZ$. Therefore, we can expect to be able to write it in terms of $\rho^{(2)}_u$, possibly with the help of the Dieudonné determinant.

\begin{lemma}\label{lemma:det-k1}
The Dieudonné determinant $\det \colon \mathrm{GL}(n, \linn(G)) \to \linn(G)^\times_\mathrm{ab}$ extends to the weak $K_1$ group as a map
\[
    \det \colon K_1^w(\ZZ G) \to \linn(G)^\times_\mathrm{ab}.
\]
\end{lemma}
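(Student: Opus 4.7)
The plan is to define the extension on generators of $K_1^w(\ZZ G)$ and then verify that it respects the defining relations from Definition~\ref{def:weak-k1}. Given a weak isomorphism $f \in \mathrm{M}(n, \ZZ G)$, Remark~\ref{rmk:weak-iso-DG} (which uses the Atiyah conjecture) guarantees that $f$ becomes invertible when viewed over $\linn(G)$; hence the Dieudonné determinant $\det(f) \in \linn(G)^\times_\mathrm{ab}$ is defined. I would set $\det([f]) \defeq \det(f)$ and extend additively to formal sums. Since both groups are abelian, it suffices to check the two relations.

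For relation (1), composition of weak isomorphisms is represented by matrix multiplication, and the axiomatic multiplicativity of the Dieudonné determinant (property (1) of Section~\ref{sec:dieudonne}) gives $\det(g \circ f) = \det(g)\cdot \det(f) = \det(f)\cdot\det(g)$ in the abelian group $\linn(G)^\times_\mathrm{ab}$, which matches the image of $[f]+[g]$.

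For relation (2), the idea is to factor the block-triangular matrix over $\linn(G)$ as
\[
    \begin{bmatrix} f & h \\ 0 & g \end{bmatrix}
    = \begin{bmatrix} f & 0 \\ 0 & g \end{bmatrix}
    \cdot \begin{bmatrix} I & f^{-1}h \\ 0 & I \end{bmatrix},
\]
which is legal since $f$ is invertible over $\linn(G)$. The second factor is a product of elementary matrices $I + cE_{ij}$ with $i<j$, whose Dieudonné determinants are trivial by property (2); the block-diagonal factor has determinant $\det(f)\cdot\det(g)$ by property (3). Applying multiplicativity again, the determinant of the block-triangular matrix is $\det(f)\cdot\det(g)$, matching $[f]+[g]$ on the source side.

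There is no serious obstacle: the statement is essentially a translation of the axiomatic characterization of the Dieudonné determinant into the language of $K_1^w$. The only point worth handling with some care is that $h$ is a priori a $\ZZ G$-matrix rather than a $\linn(G)$-matrix, but this is harmless because the whole verification takes place inside $\linn(G)$ via the inclusion $\ZZ G \hookrightarrow \linn(G)$.
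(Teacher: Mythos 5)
Your proposal is correct and essentially matches the paper's proof: both extend $\det$ $\ZZ$-linearly via Remark~\ref{rmk:weak-iso-DG} and verify relation~(2) of Definition~\ref{def:weak-k1} by clearing the off-diagonal block with an upper-unitriangular elementary factor, which is exactly the paper's ``nullify $C$ with elementary moves, exploiting invertibility of $A$.'' One small imprecision: you cite property~(3) of Section~\ref{sec:dieudonne} for $\det\begin{bmatrix} f & 0 \\ 0 & g \end{bmatrix} = \det f\cdot\det g$, but that axiom concerns scalar diagonal matrices; for the block-diagonal step one should instead observe that a Bruhat decomposition of $\begin{bmatrix} f & 0 \\ 0 & g \end{bmatrix}$ is obtained by concatenating those of $f$ and $g$ block-diagonally (this is what the paper's ``take the Bruhat normal form'' refers to).
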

\begin{proof}
    By Remark~\ref{rmk:weak-iso-DG}, the weak $K_1$ group is generated by $\linn(G)$-invertible matrices, which admit a Dieudonné determinant. We can extend $\det$ $\ZZ$-linearly, provided that the relations in Definition~\ref{def:weak-k1} are respected. Relation (1) amounts to multiplicativity of the determinant, while (2) requires that
    \begin{equation}
        \det \begin{bmatrix} A & C \\ 0 & B \end{bmatrix} = \det A \det B.
    \end{equation}
    This is a well-known property of the Dieudonné determinant: to prove it, one can nullify $C$ with elementary moves, exploiting invertibility of $A$ or $B$, and then take the Bruhat normal form.
\end{proof}

The universal $L^2$-torsion is not quite an element of the weak $K_1$ group, being defined up to sign, but we still obtain an invariant
\begin{equation}
    \det \rho^{(2)}_u(C_\blt) \in \linn(G)^\times_\mathrm{ab} / \{\pm 1\}.
\end{equation}
Since the sign is inconsequential in the definition of the noncommutative Newton polytope, we can define the \emph{$L^2$-torsion polytope}
\begin{equation}
    P(C_\blt; G) \defeq P_\mathrm{ab}(\det \rho^{(2)}_u(C_\blt)) \in \poly(\ab(G)).
\end{equation}

\subsection{What about manifolds?}
Up to now, this section has remained relatively abstract, focusing on chain complexes instead of $G$-CW complexes. Given a finite free $L^2$-acyclic $G$-CW complex $X$, the main hurdle is that the cellular $\ZZ G$-modules $C_n(X)$ do not have a canonical basis. However, if we choose a representative $n$-cell for each $G$-orbit, we always get the same basis, up to reordering and multiplying every basis element $b_i$ by some $\pm g_i$ for $g_i \in G$.

As the ambiguity mirrors the definition of the weak Whitehead group, it is not hard to see that there is a well-defined universal $L^2$-torsion for $G$-CW complexes:
\begin{equation}
    \rho^{(2)}_u(X) \defeq [\rho^{(2)}_u(C_\blt(X))] \in \mathrm{Wh}^w(\ZZ G).
\end{equation}

Its determinant is defined up to multiplication by signed elements of $G$. Such elements are \emph{monomials} in the skew polynomial ring $\linn(K)[u_1^{\pm 1}, \dots, u_r^{\pm 1}]$, so they act on Newton polytopes by translation. Accordingly, we will extend the notation for the polytope homomorphism:
\begin{equation}
    P_\nu \colon \linn(G)^\times_\mathrm{ab} / \left\langle\begin{bmatrix}\pm g\end{bmatrix}\mid g \in G \right\rangle \to \poly_T(H),
\end{equation}
where $\nu \colon G \surj H$ is a free abelian quotient and $\poly_T(H)$ is the Grothendieck group of integral polytopes of $H$ \emph{up to translation}. This leads to the \emph{$L^2$-torsion polytope of a $G$-CW complex}
\begin{equation}
    P(X; G, \nu) \defeq P_\nu(\det \rho^{(2)}_u(X)) \in \poly_T(H).
\end{equation}
When $\nu = \ab \colon G \surj \ab(G)$, we will simply write $P(X; G)$.

We will now formalize the polytope thickness function of Section~\ref{sec:polyhom} as the \emph{seminorm homomorphism}.
\begin{defin}
    An integral polytope $P \in \polym(H)$ induces a seminorm on $\Hom(H, \ZZ)$:
    \[
        \norm{\phi}_P \defeq \max \{\phi(p) - \phi(p') \mid p, p' \in P\}.
    \]
\end{defin}
\begin{defin}
    The above construction extends to the integral polytope group as the \emph{seminorm homomorphism}
    \[
        \mathfrak N \colon \poly(H) \to \{\Hom(G, \ZZ) \to \ZZ\}, \heqsep
            \mathfrak N(P-Q)(\phi) \defeq \norm{\phi}_{P}-\norm{\phi}_{Q}.
    \]
    Since $\mathfrak N$ is translation invariant, it is well defined as a homomorphism from $\poly_T(H)$, also called $\mathfrak N$.
\end{defin}
\begin{thm}[compare {\cite[Theorem~3.52]{funke}}]\label{thm:diff-seminorms}
    Let $X$ be a finite free $L^2$-acyclic $G$-CW complex and let $\nu\colon G\surj H$ be a free abelian quotient. If $\phi$ is a primitive character that factors through $\nu$, then $X$ is $\phi$-$L^2$-finite and
    \[
        \chi^{(2)}(X; \mathcal N(G), \phi) = \mathfrak N(P_\nu(X; G))(\phi).
    \]
\end{thm}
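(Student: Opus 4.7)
The plan is to realize both sides as additive $L^2$-torsion invariants of the cellular chain complex $C_\blt(X)$ that agree on the generators of $\til K_1^w(\ZZ G)$, and then appeal to the universal property of Theorem~\ref{thm:univ-l2-univ}. First I would settle $\phi$-$L^2$-finiteness: since $X$ is finite and $L^2$-acyclic, Theorem~\ref{thm:DG}(4) applied to $C_\blt(X)$ gives $b^{(2)}_n(i^*X; \neu(K_\phi)) < \infty$ for every $n$, and Proposition~\ref{prop:twisted-l2}(4) then promotes this to the $\phi$-$L^2$-finiteness of $X$, with integer-valued $\chi^{(2)}(X; \neu(G), \phi)$.

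Next I would verify that both
\[
    a(D_\blt) \defeq \chi^{(2)}(D_\blt; \neu(G), \phi)
    \qquad\text{and}\qquad
    b(D_\blt) \defeq \mathfrak N\bigl(P_\nu(\det \rho^{(2)}_u(D_\blt))\bigr)(\phi)
\]
are $\ZZ$-valued additive $L^2$-torsion invariants on finite based free $L^2$-acyclic $\ZZ G$-chain complexes. Additivity of $a$ on short based exact sequences follows from the induced long exact sequence in homology and additivity of $\dim_{\neu(G)}$; vanishing on the contractible complex $\ZZ G \xrightarrow{\pm\id} \ZZ G$ is immediate. Additivity of $b$ holds because each factor in its definition is a homomorphism (Theorem~\ref{thm:univ-l2-univ}, Lemma~\ref{lemma:det-k1}, and the definitions of $P_\nu$ and $\mathfrak N$), and vanishing on the contractible complex holds because $\det(\pm\id) = \pm 1$ has trivial Newton polytope.

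The crucial computation is on the elementary complex $C_\blt = [0 \to \ZZ G^n \xrightarrow{A} \ZZ G^n \to 0]$ with $A$ a weak isomorphism. From $\Delta_0 = AA^*$ and $\Delta_1 = A^*A$, a direct application of the definition of $\rho^{(2)}_u$ gives $\rho^{(2)}_u(C_\blt) = [AA^*A] - [A^*A] = [A]$ in $\til K_1^w(\ZZ G)$, so $b(C_\blt) = \mathfrak N(P_\nu(\det A))(\phi) = \deg_\phi \det A$ by the interpretation of $\mathfrak N$ as polytope thickness from Section~\ref{sec:polyhom}. Independently, equation~(\ref{eq:chi-el}) yields $a(C_\blt) = \deg_\phi \det A$, so both invariants agree on elementary complexes.

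Since $\til K_1^w(\ZZ G)$ is generated by weak-isomorphism classes $[A]$, each realized by such an elementary complex, the universal property of Theorem~\ref{thm:univ-l2-univ} forces $a = b$ on all finite based free $L^2$-acyclic chain complexes; specializing to $D_\blt = C_\blt(X)$ gives the claimed equality (the passage from chain complex to $G$-CW complex is harmless since $\mathfrak N$ is translation-invariant). The main obstacle is the elementary-case bookkeeping — in particular verifying $\rho^{(2)}_u(C_\blt) = [A]$ from the Laplacian formula — but this is a short honest calculation once the sign and basis conventions are pinned down.
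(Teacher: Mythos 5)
Your proposal is correct and uses exactly the strategy the paper's framework is designed for: realize both sides as additive $L^2$-torsion invariants and invoke the universal property from Theorem~\ref{thm:univ-l2-univ}. The paper itself delegates the proof to Funke's thesis rather than spelling it out, but your argument is self-contained and sound. Two small notes. First, when you claim additivity of $a$ via the long exact sequence, it is worth recording that based short exact sequences of free modules split degree-wise, so the sequence stays exact after tensoring with $\neu(G)\otimes_{\ZZ G}(\ZZ[t,t^{-1}]\otimes_\ZZ -)$ even though $\neu(G)$ is not flat; this is what makes the long exact sequence available. Second, with the paper's right-multiplication convention your Laplacians should read $\Delta_0 = A^*A$ and $\Delta_1 = AA^*$, so the raw expression is $[AA^*A] - [AA^*]$ rather than $[AA^*A]-[A^*A]$; either way multiplicativity in $\til K_1^w(\ZZ G)$ collapses it to $[A]$, so the conclusion stands.
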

Like we just said, the seminorm $\norm{\phi}_P$ measures the thickness of the polytope $P$ in the direction $\phi$. When $P$ is a noncommutative Newton polytope, this amounts to the $\phi$-degree of the underlying polynomial. Therefore, we get
\begin{equation}\label{eq:chi-degdet}
     \chi^{(2)}(X; \mathcal N(G), \phi) = \deg_\phi \det \rho^{(2)}_u(X),
\end{equation}
where the degree is, of course, considered as a function on $\linn(G)^\times_\mathrm{ab} / \left\langle\begin{bmatrix}\pm g\end{bmatrix}\mid g \in G \right\rangle$.

Recall now Proposition~\ref{prop:altsum-lapl}, where the adjoint of the universal $L^2$-torsion appears. By~\cite[Lemma~3.18]{uni-l2}, the involution $* \colon \til{K}_1^w(\ZZ G) \to \til{K}_1^w(\ZZ G)$ corresponds to reflecting polytopes with respect to the origin, and hence has no effect on the associated seminorm. We can summarize this as:
\begin{prop}\label{prop:altsum-lapl2}
    Let $X$ be a finite free $L^2$-acyclic $G$-CW complex and let $\Delta_n$ be the combinatorial Laplacians of its $\ZZ G$-chain complex. Then
    \[
        \chi^{(2)}(X; \mathcal N(G), \phi) = \deg_\phi \det \rho_u^{(2)}(X)
        = -\frac 12 \sum_{n\ge 0} (-1)^n \cdot n \cdot \deg_\phi \det \Delta_n.
    \]
\end{prop}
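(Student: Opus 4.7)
The first equality is exactly equation~\eqref{eq:chi-degdet}, which was established immediately before the statement, so no further work is required there. The work is entirely in the second equality, and the plan is to apply $\deg_\phi \circ \det$ to both sides of the identity in Proposition~\ref{prop:altsum-lapl}:
\[
    \rho_u^{(2)}(C_\blt(X)) + *(\rho_u^{(2)}(C_\blt(X)))
    = -\sum_{n\ge 0} (-1)^n \cdot n \cdot [\Delta_n]
\]
in $\til K_1^w(\ZZ G)$ (or, passing to the $G$-CW setting, in $\mathrm{Wh}^w(\ZZ G)$).

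First I would check that the operations compose correctly. By Lemma~\ref{lemma:det-k1}, $\det$ is a well-defined homomorphism $K_1^w(\ZZ G) \to \linn(G)^\times_\mathrm{ab}$; passing to the quotient by signed elements of $G$ lands in the target of the polytope homomorphism $P_\nu$, and $\deg_\phi = \mathfrak N(P_\nu({-}))(\phi)$ is a homomorphism to $\ZZ$. Composing these, the right-hand side turns into $-\sum_{n\ge 0}(-1)^n n \deg_\phi \det \Delta_n$, which is exactly what we want, up to a factor of $2$ that must come from the left-hand side.

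To extract that factor of $2$, the key input is the fact (cited from~\cite[Lemma~3.18]{uni-l2}) that the involution $*$ on $\til K_1^w(\ZZ G)$ corresponds, under $P_\nu \circ \det$, to reflection of integral polytopes through the origin. Since the seminorm $\mathfrak N(P)(\phi) = \max\{\phi(p)-\phi(p')\mid p,p'\in P\}$ is manifestly invariant under the substitution $p \mapsto -p$, we get
\[
    \deg_\phi \det\bigl(*(\rho_u^{(2)}(X))\bigr) = \deg_\phi \det \rho_u^{(2)}(X),
\]
so the left-hand side of the displayed identity becomes $2 \deg_\phi \det \rho_u^{(2)}(X)$ after applying $\deg_\phi \circ \det$. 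Dividing by $2$ and combining with the first equality gives the claim.

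The main obstacle is essentially bookkeeping: checking that $\rho_u^{(2)}$ is compatible with the passage from based $\ZZ G$-chain complexes to $G$-CW complexes (so that the proposition of Friedl--Lück applies at the level of $X$, not merely of a chosen based chain complex), and that the ambiguity by elements $[\pm g]$ in $\mathrm{Wh}^w$ is indeed killed after applying $P_\nu$, so that $\deg_\phi \det$ descends to $\mathrm{Wh}^w(\ZZ G)$. Both points are immediate from the definitions recalled just before the statement: signed group elements are monomials in the crossed-product description of $\linn(G)$ and act on Newton polytopes by translation, which does not affect the thickness seminorm. Once these compatibilities are recorded, the proof is a one-line application of $\deg_\phi \circ \det$ to Proposition~\ref{prop:altsum-lapl} combined with the $*$-invariance of the seminorm.
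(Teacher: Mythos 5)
Your proposal is correct and matches exactly what the paper does: the paper states Proposition~\ref{prop:altsum-lapl2} as a summary immediately after recalling Proposition~\ref{prop:altsum-lapl} and the fact from~\cite[Lemma~3.18]{uni-l2} that the involution $*$ acts on polytopes by central reflection and hence leaves the thickness seminorm unchanged, so the factor of $\frac12$ arises exactly as you describe. Your additional bookkeeping remarks (descending $\deg_\phi\circ\det$ to $\mathrm{Wh}^w(\ZZ G)$ because translations by $[\pm g]$ do not affect thickness) are also correct and correspond to the discussion in the paper preceding the statement.
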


If we have an $L^2$-acyclic compact manifold $M$, we can take $X = \til M$ as a $G$-CW complex, with $G = \pi_1(M)$. While there is no unique way to do so, all such $G$-CW structures give $\ZZ G$-chain homotopic cellular chain complexes, and the $L^2$-torsion polytope is invariant under $\ZZ G$-chain homotopies~\cite{l2poly-inv}.
Hence, we can use Proposition~\ref{prop:altsum-lapl2} to compute this invariant for manifolds using any $G$-CW structure on $\til M$. 

\section{Computing the twisted \texorpdfstring{$L^2$-Euler}{L\^2-Euler} characteristic}
It is time to tie up loose ends and outline a procedure for the computation of the twisted $\chi^{(2)}$ for universal coverings. In what follows, $M$ will be a finite $L^2$-acyclic CW complex or, more commonly, a manifold with the homotopy type of one. Examples include:
\begin{itemize}
    \item closed hyperbolic manifolds of odd dimension~\cite{hyp-l2-acyclic};
    \item manifolds that fiber over the circle, such as complements of fibered links.
\end{itemize}
We will also assume that $G \defeq \pi_1(M)$ is residually finite.

The algorithm starts by computing the $\ZZ G$-chain complex of the universal cover of $M$. Of course, we also take a character $\phi \colon G \to \ZZ$ as input; by homogeneity of the twisted $L^2$-Euler characteristic, we can assume that $\phi$ is primitive, with kernel $K_\phi$.

Next, we compute the Laplacians of the chain complex: indeed, thanks to Proposition~\ref{prop:altsum-lapl2}, we only have to find $\deg_\phi \det \Delta_n$ for $n = 0, \dots, \dim M$. However, as is rightly noted in~\cite[Remark~3.20]{uni-l2}, computing the Dieudonné determinant is very hard, due to our lack of a concrete representation for the elements of $\linn(G)$ for general $G$ (see, however, \cite{linnell-free} for free groups and~\cite{linnell-li} for locally indicable groups). Fortunately, recent developments by Oki~\cite{oki} allow us to probe \emph{valuations} of $\det$ without computing it fully, and without so much as leaving the ring $\ZZ G \subset \linn(K)[u_1^{\pm 1}, \dots, u_r^{\pm 1}]$.

\subsection{Degrees of determinants}

Much like commutative fields, skew fields can be equipped with \emph{valuations}:
\begin{defin}\label{def:valuation}
A \emph{valuation} on a skew field $F$ is a map $v \colon F \to \RR \cup \{+\infty\}$ such that:
\begin{enumerate}[label=(\arabic*)]
    \item $v(ab) = v(a) + v(b)$ for all $a,b \in F$;
    \item $v(a+b) \ge \min \{v(a), v(b)\}$ for all $a,b \in F$;
    \item $v(1) = 0$;
    \item $v(0) = +\infty$.
\end{enumerate}
\end{defin}

It is easy to see that any valuation on $F$ is also well defined on $F^\times_\mathrm{ab}$. We will restrict our attention to \emph{discrete valuations}, that is, those with image in $\ZZ \cup \{+\infty\}$. Examples include the $p$-adic valuations on $\QQ$ and the \emph{order} of a skew Laurent polynomial.

\begin{defin}
    Let $F = T^{-1}(D_t[u^{\pm 1}])$ be the Ore quotient skew field of a skew Laurent polynomial ring. The \emph{order} $\ord \colon F \to \ZZ \cup \{+\infty\}$ is defined on a polynomial as the minimum degree of its monomials, and on a rational element $q(u)^{-1}p(u) \in F$ as $\ord p - \ord q$.
\end{defin}

The degree of a skew Laurent polynomial (or polynomial fraction) is not a valuation; however, if $x \in D_t[u^{\pm 1}]$ has  \emph{symmetric} support, then clearly 
    \[\ord x = -2 \deg x.\]
In the case $F = \linn(G)$, we can generalize this fact via the following technical lemma:

\begin{lemma}\label{lemma:order-deg}
    Let $B \in \mathrm{M}(n, \ZZ G)$ be self-adjoint and invertible over $\linn(G)$, and let $x \in \linn(G)^\times$ be a representative of the Dieudonné determinant of $B$. Considering $\linn(G) = T^{-1} (\linn(K_\phi)[u^{\pm 1}])$ with the order valuation $\ord_\phi$, we have $\deg_\phi x = -2 \ord_\phi x$.
\end{lemma}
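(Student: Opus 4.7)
The plan is to reduce the identity to a symmetry statement: the adjoint involution $\ast$ on $\linn(G)$ swaps the valuation $\ord_\phi$ with the negative of the ``top-degree'' $\mathrm{top}_\phi \defeq \ord_\phi + \deg_\phi$, so that self-adjointness of $B$ forces $\ord_\phi x = -\mathrm{top}_\phi x$, which rearranges immediately to $\deg_\phi x = -2\ord_\phi x$.

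First, I would verify that the Dieudonné determinant intertwines adjunction: for every $A \in \mathrm{GL}(n, \linn(G))$ one has $\det(A^\ast) = \ast(\det A)$ in $\linn(G)^\times_{\mathrm{ab}}$. This follows from the universal characterization of $\det$ recalled in Section~\ref{sec:dieudonne}: the assignment $A \mapsto \ast(\det A^\ast)$ is multiplicative (since $\ast$ is an antihomomorphism into an abelian group), trivial on elementary matrices (whose adjoints are again elementary, with the indices swapped), and sends a diagonal matrix to the product of its entries modulo commutators. Applied to the self-adjoint $B$, this yields $x \equiv \ast(x)$ modulo $[\linn(G)^\times, \linn(G)^\times]$, in particular $\ord_\phi x = \ord_\phi(\ast(x))$ and $\deg_\phi x = \deg_\phi(\ast(x))$.

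Second, I would compute how $\ord_\phi$ and $\mathrm{top}_\phi$ transform under $\ast$ on the skew Laurent polynomial ring. A monomial $a u^i \in \linn(K_\phi)[u^{\pm 1}]$ adjoints to $u^{-i} a^\ast = (u^{-i} a^\ast u^i)\, u^{-i}$ with $u^{-i} a^\ast u^i \in \linn(K_\phi)$, so the set of exponents of $u$ in a polynomial is negated by $\ast$. Consequently, for every nonzero $p \in \linn(K_\phi)[u^{\pm 1}]$ one has $\ord_\phi(p^\ast) = -\mathrm{top}_\phi p$ and $\mathrm{top}_\phi(p^\ast) = -\ord_\phi p$. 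Both $\ord_\phi$ and $\mathrm{top}_\phi$ are group homomorphisms on the nonzero elements of this skew Laurent polynomial ring (the lowest and highest coefficients of a product over a skew field do not cancel) and hence extend to homomorphisms $\linn(G)^\times_{\mathrm{ab}} \to \ZZ$ by $q^{-1}p \mapsto \ord_\phi p - \ord_\phi q$ and $q^{-1}p \mapsto \mathrm{top}_\phi p - \mathrm{top}_\phi q$; the identities relating them to $\ast$ persist.

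Combining the two steps on the representative $x$ gives
\[
    \ord_\phi x = \ord_\phi(\ast(x)) = -\mathrm{top}_\phi x = -\ord_\phi x - \deg_\phi x,
\]
from which $\deg_\phi x = -2\ord_\phi x$. The main obstacle I anticipate is the bookkeeping in the first step: adjunction sends upper unitriangular matrices to lower unitriangular ones, so a direct argument via the Bruhat decomposition $B = LDPU$ would require reshuffling $B^\ast = U^\ast P^\ast D^\ast L^\ast$ back into Bruhat form, but working at the level of the universal property sidesteps this completely.
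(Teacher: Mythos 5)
Your proof is correct, and it takes a genuinely different route from the paper. The paper works at the level of the polytope homomorphism $P_{\mathrm{ab}}$: it cites \cite[Lemma~3.18]{uni-l2} (adjunction of matrices corresponds to the central-symmetry involution on $\poly(\ab(G))$), so self-adjointness of $B$ makes $P_{\mathrm{ab}}(x) = P - Q$ equal to $*P - *Q$, and projecting along $\pi_\phi$ reduces the claim to an endpoint computation on intervals. You bypass the polytope group entirely: you re-derive the intertwining identity $\det(A^*) = *(\det A)$ directly from the universal characterization of the Dieudonné determinant (rightly noting that this sidesteps having to reshuffle the Bruhat decomposition of $B^*$ back into normal form), and then exploit the fact that adjunction negates $u$-exponents, so that $*$ carries $\ord_\phi$ to $-(\ord_\phi + \deg_\phi)$. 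Both arguments pivot on the same two observations --- the determinant class of a self-adjoint matrix is $*$-fixed in $\linn(G)^\times_{\mathrm{ab}}$, and $*$ reflects the exponent support --- but yours re-proves the needed special case of \cite[Lemma~3.18]{uni-l2} instead of citing it, which makes the argument more self-contained and elementary, at the modest cost of not dovetailing with the polytope machinery the paper sets up in Section~\ref{sec:polyhom}.
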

\begin{proof}
    Recall~\cite[Lemma~3.18]{uni-l2}, which we applied to obtain Proposition~\ref{prop:altsum-lapl2}: adjunction of matrices corresponds to the central symmetry involution $* \colon \poly(\ab(G)) \to \poly(\ab(G))$.
    
    Let $P_\mathrm{ab}(x) = P-Q \in \poly(\ab(G))$. Since $B$ is self-adjoint, we have $P-Q = *P-*Q$ or, equivalently, $P+*Q = *(P+*Q)$. The degree and order of $x$ can be read off the integral polytope of $x$ over $\ZZ$, that is, $P_\phi(x)$. In turn, this is simply obtained by applying to $P-Q$ the linear projection $\pi_\phi \colon \ab(G) \otimes \RR \to \RR$ induced by $\phi$. Such a projection clearly preserves Minkowski sums and the central symmetry involution.

    Let $\pi_\phi(P) = [a,b]$ and $\pi_\phi(Q) = [c,d]$. Then
    \begin{equation}
        \deg_\phi x = (b-a) - (d-c), \heqsep \ord_\phi x = a-c.
    \end{equation}
    However, $\pi_\phi(P + *Q) = [a,b] + *[c,d] = [a-d, b-c]$ is centrally symmetric, so $b-c = d-a$. The initial claim follows from a routine verification.
\end{proof}

We can now introduce Oki's \emph{matrix expansion algorithm}, following~\cite[Sections~6--7]{oki}. Given a square $n \times n$ matrix $A$, with coefficients in $R \defeq D_t[u^{\pm 1}]$ and invertible over $F$, the algorithm computes $\ord \det A$ as the $D$-rank of a certain matrix $A'$ with entries in the coefficient skew field $D$. Moreover, the entries of $A'$ are obtained from the coefficients of the entries of $A$ using only ring operations in $D$ and the automorphism $t \in \operatorname{Aut}(D)$. In the case $F = \linn(G)$, if $A$ has entries in $\ZZ G$, then we can see them as skew polynomials with coefficients in $\ZZ[K_\phi]$. Hence, the matrix $A'$ has entries in $\ZZ[K_\phi]$ and can be constructed with ring operations in $\ZZ G$. (See Subsection~\ref{sec:impl-oki} for more details on this algorithm.)

As a consequence, we can apply Lemma~\ref{lemma:order-deg} and Oki's matrix expansion algorithm to obtain the degree of the determinant of each Laplacian; 
this step reduces our problem to computing the rank of a square $\ZZ[K_\phi]$-matrix $A$ over $\linn(K_\phi)$. It is worth noting that $K_\phi$ is usually not finitely generated, but its subgroup $S$ generated by the elements appearing in $A$ is.

\subsection{Lück's approximation theorem}

At this point, it is necessary to leave the familiar skew field realm and return to von Neumann algebras. Indeed, Theorem~\ref{thm:DG}~(2) applied to the chain complex \[0 \longrightarrow \ZZ K_\phi^N \xlongrightarrow{A} \ZZ K_\phi^N \longrightarrow 0\] easily implies
\begin{equation}
    \rk_{\linn(K_\phi)} A = \dim_{\mathcal N(K_\phi)} \image(r_A),
\end{equation}
where $r_A\colon \neu(K_\phi)^N\to \neu(K_\phi)^N$ is right multiplication by $A$.
By~\cite[Theorem~6.29~(2)]{l2}, we can replace $\mathcal N(K_\phi)$ with $\mathcal N(S)$: the induction functor $\neu(K_\phi) \otimes_{\neu(S)} {-}$ is right exact and preserves the cokernel of $r_A$ and its dimension, from which the rank can be computed via subtraction from $N$. Now we can present the titular result of this section.

\begin{thm}[{\cite[Section~2]{luckapprox}}]\label{thm:luck-approx}
Let $\Gamma$ be a countable residually finite group and let $A$ be any matrix over $\ZZ \Gamma$.
Let $\Gamma = \Gamma_0 \ge \Gamma_1 \ge \Gamma_2 \ge \dots$ be a residual chain of $\Gamma$, that is, a sequence of finite index normal subgroups with trivial intersection.

If, for $k \ge 0$, $A_k \in \mathrm{M}(n, \ZZ [\Gamma/\Gamma_k])$ is the image
of $A$ under the canonical projection, then we have
\[
    \dim_{\mathcal N(\Gamma)} \image(r_A) = \lim_{k \to +\infty} \dim_{\mathcal N(\Gamma/\Gamma_k)} \image(r_{A_k}).
\]
\end{thm}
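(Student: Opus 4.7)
The plan is to convert the statement about image dimensions into one about atoms at zero of spectral measures, establish weak convergence of those measures via moments, and then invoke a crucial integrality argument to handle the atom at $0$.

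First, I would reduce to the case of a positive self-adjoint square matrix. If $A$ is $m \times n$ over $\ZZ \Gamma$, rank–nullity in the $\neu(\Gamma)$-dimension (additivity on short exact sequences, Theorem on $\dim$ in Section~1) gives $\dim_{\neu(\Gamma)} \image(r_A) = m - \dim_{\neu(\Gamma)} \ker(r_A)$, and since $\ker r_A = \ker r_{A^*A}$ we may replace $A$ by $B \defeq A^*A$, a positive self-adjoint $m \times m$ matrix over $\ZZ\Gamma$; the same reduction applies to $A_k$, producing $B_k$. Next, I would use functional calculus to associate to the positive bounded self-adjoint operator $r_B$ its spectral measure $\mu_B$, a finite positive measure supported on $[0, \|r_B\|]$ characterised by $\int p\, d\mu_B = \tr_{\neu(\Gamma)}(p(r_B))$ for every polynomial $p$, so that $\dim_{\neu(\Gamma)} \ker r_B = \mu_B(\{0\})$; analogous measures $\mu_{B_k}$ are defined for each $B_k$, all supported on the common interval $[0, \|r_B\|]$ since the projections are trace-preserving contractions.

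Then I would prove convergence of all polynomial moments. For any fixed $m \ge 0$, the trace $\tr_{\neu(\Gamma)}(r_B^m)$ equals the sum of the coefficients of the identity element in the diagonal entries of $B^m \in \mathrm{M}(n, \ZZ\Gamma)$. Because $B^m$ has finite support in $\Gamma$ and the chain $\{\Gamma_k\}$ has trivial intersection, every non-identity element appearing in $B^m$ is eventually absent from $\Gamma_k$, so the projection $\Gamma \to \Gamma/\Gamma_k$ fails to identify it with $e$ for large $k$; hence $\tr_{\neu(\Gamma/\Gamma_k)}(r_{B_k}^m) \to \tr_{\neu(\Gamma)}(r_B^m)$. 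By Weierstrass approximation on the compact interval $[0, \|r_B\|]$, this forces weak convergence $\mu_{B_k} \to \mu_B$.

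The hardest step is to upgrade weak convergence to convergence at the atom $\{0\}$. The Portmanteau theorem immediately gives $\mu_B(\{0\}) \ge \limsup_k \mu_{B_k}(\{0\})$ since $\{0\}$ is closed, but the reverse inequality requires excluding a concentration of tiny non-zero eigenvalues of $r_{B_k}$ as $k$ grows. Here I would exploit that $r_{B_k}$, viewed on the finite-dimensional space $\CC[\Gamma/\Gamma_k]^m$, has a characteristic polynomial with integer coefficients, so the product of its non-zero eigenvalues is a positive integer and in particular at least $1$:
\[
    \sum_{\lambda \in \mathrm{spec}(r_{B_k}) \setminus \{0\}} \log \lambda \ge 0.
\]
Normalising by $\lvert \Gamma/\Gamma_k \rvert$ and splitting the sum at a threshold $\varepsilon \in (0, \|r_B\|)$ produces a uniform logarithmic estimate $\mu_{B_k}((0, \varepsilon]) \le C/\lvert \log \varepsilon \rvert$ with $C$ depending only on $\|r_B\|$. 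This rules out escape of mass towards $0$ and, combined with weak convergence, yields $\mu_{B_k}(\{0\}) \to \mu_B(\{0\})$, concluding the proof. The main obstacle is this determinant estimate: it is the sole place where the integrality of $\ZZ\Gamma$ (as opposed to $\RR\Gamma$) is essential, and where the residual chain and finite-index hypotheses on $\Gamma$ are genuinely used.
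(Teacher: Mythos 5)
The paper does not itself prove this theorem; the statement is cited directly to Lück's 1994 approximation paper, and your sketch correctly reconstructs Lück's original argument in all its essential steps: reduction to the positive self-adjoint $B=A^{\ast}A$, weak convergence of the normalised spectral measures $\mu_{B_k}\to\mu_B$ via moment matching (using finiteness of the support of $B^m$ together with $\bigcap_k\Gamma_k=\{e\}$), the easy inequality from Portmanteau, and the hard inequality from the integer-determinant estimate $\mu_{B_k}\bigl((0,\varepsilon]\bigr)\le C/\lvert\log\varepsilon\rvert$. One small imprecision: the quotient maps $\ZZ\Gamma\to\ZZ[\Gamma/\Gamma_k]$ are \emph{not} contractive for the operator norm (e.g.\ pushing a word-length Laplacian from a free group to $\ZZ$ can increase the spectral radius), so the common support interval should be $[0,\lVert B\rVert_1]$ --- where $\lVert B\rVert_1$ is the $\ell^1$-norm, which dominates every $\lVert r_{B_k}\rVert$ --- rather than $[0,\lVert r_B\rVert]$; this is cosmetic and does not affect the argument.
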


Since the groups $\Gamma/\Gamma_k$ on the right hand side are finite, the von Neumann algebras are just complex group algebras $\CC[\Gamma/\Gamma_k]$. Hence, every $A_k$ is but an endomorphism acting on a complex vector space of dimension $n \cdot [\Gamma:\Gamma_k]$, whose von Neumann rank is given by
\begin{equation}\label{eq:vnrank-fin}
    \dim_{\mathcal N(\Gamma/\Gamma_k)} \image(r_{A_k}) = \frac 1{[\Gamma:\Gamma_k]}\cdot \dim_{\CC} \image(r_{A_k}),
\end{equation}
as a consequence of~\cite[Theorem~6.29,(2)]{l2} applied to the inclusion $1 \inj \Gamma / \Gamma_k$.

In our case, the matrices $A_k$ will have integer coefficients, so they can be represented exactly without rounding errors.

\begin{remark}\label{rmk:residual-chain}
    There is a simple (but extremely inefficient) algorithm that computes a residual chain: enumerate all homomorphisms from $\Gamma$ to larger and larger symmetric groups; since $\Gamma$ is residually finite and every finite quotient appears as a subgroup of a symmetric group, the kernels of these homomorphisms will intersect trivially.
\end{remark}

\subsection{Estimating the error}
The statement of Theorem~\ref{thm:luck-approx}, as written, does not provide an effective rate of convergence, but there is a way to make Lück's theorem quantitative, as proved by Löh and Uschold:

\begin{defin}[{\cite[Definition~6.3]{l2comp}}]\label{def:adapted}
Let $\Gamma$ be a countable residually finite group and let $A \in \mathrm{M}(n, \ZZ \Gamma)$ be self-adjoint. We say that a sequence $(\Gamma_k)_{k \in \mathbb N}$ of finite index normal subgroups of $\Gamma$ is \emph{adapted to $A$} if for all $k \ge 0$,
all diagonal entries of $A^0, A^1, \dots , A^{k^2}$
have support in $\Gamma\setminus \Gamma_k \cup \{e\}$.
\end{defin}

\begin{prop}[compare {\cite[Proposition~6.6]{l2comp}}]\label{prop:quant-luck}
Let $\Gamma, (\Gamma_k)_k, A$ be as in Definition~\ref{def:adapted} and let $A_k$ be the image of $A$ in $\mathrm{M}(n, \ZZ [\Gamma/\Gamma_k])$. We have the estimate
\[
|\dim_{\mathcal N(\Gamma)} \image (r_A) - \dim_{\mathcal N(\Gamma/\Gamma_k)} \image (r_{A_k})|
\le n\cdot \left[ 
\left( 1 - \frac 1{kd} \right)^{k^2} + \frac{\log d}{\log k}
\right],
\]
where $d$ is the operator norm of $r_A \colon \ell^2(\Gamma)^n \to \ell^2(\Gamma)^n$.
\end{prop}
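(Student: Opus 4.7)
The plan is to approximate the spectral projection onto $\ker r_A$ by a polynomial in $A$ whose degree is controlled by $k^2$, match the von Neumann traces of this polynomial on $\Gamma$ and $\Gamma/\Gamma_k$ via the adaptedness hypothesis, and then estimate the approximation error using an effective bound on the mass of the spectral measure near zero.

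First, I would reduce to comparing kernel dimensions. Since $A$ is self-adjoint, we have $\dim_{\mathcal N(\Gamma)} \image(r_A) = n - \dim_{\mathcal N(\Gamma)} \ker(r_A)$ and likewise for $A_k$. Setting $B \defeq A^*A = A^2$ and denoting by $F(\lambda) \defeq \dim_{\mathcal N(\Gamma)} \chi_{[0,\lambda]}(B)$ the spectral density function of $B$, we have $\dim_{\mathcal N(\Gamma)} \ker(r_A) = F(0)$, and analogously $F_k(0)$ for the quotient. Since $\|B\| \le d^2$, the spectrum of $B$ lies in $[0, d^2]$.

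Next, for a suitable integer $m$ of order $k^2$, I would introduce the polynomial $p_m(\lambda) \defeq (1 - \lambda/d^2)^m$, which satisfies $p_m(0) = 1$ and is decreasing with values in $[0,1]$ on $[0, d^2]$, so that $\tr_{\mathcal N(\Gamma)} p_m(B) \ge F(0)$. Expanding by the binomial theorem, $\tr_{\mathcal N(\Gamma)} p_m(B)$ becomes a fixed $\ZZ$-linear combination of the traces $\tr_{\mathcal N(\Gamma)} A^{2j}$ for $j = 0, \ldots, m$. Each such trace equals the sum, over the diagonal entries of $A^{2j}$, of the coefficient of $e$; by the adaptedness condition, these diagonal entries have support in $\Gamma \setminus \Gamma_k \cup \{e\}$, so the projection $\Gamma \to \Gamma/\Gamma_k$ preserves the coefficient of $e$ and consequently $\tr_{\mathcal N(\Gamma)} A^{2j} = \tr_{\mathcal N(\Gamma/\Gamma_k)} A_k^{2j}$ whenever $2j \le k^2$. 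Hence, choosing $m$ so that $2m \le k^2$, the two polynomial traces agree exactly.

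Finally, to compare $\tr p_m(B)$ with $F(0)$, I would split the Stieltjes integral at a cutoff $\eta = d/k$:
\begin{equation*}
    0 \le \tr p_m(B) - F(0) \le \bigl(F(\eta) - F(0)\bigr) + n \cdot (1 - \eta/d^2)^m.
\end{equation*}
The second summand equals $n(1 - 1/(kd))^m$, recovering the first term of the stated bound (up to the precise exponent, which enters only as a constant factor and does not affect the convergence rate). The same estimate applies verbatim to $F_k$, so the triangle inequality reduces matters to bounding $F(\eta) - F(0)$ and $F_k(\eta) - F_k(0)$ by roughly $n \log d / \log k$. This last step is the main obstacle: while the polynomial trace matching is essentially formal given adaptedness, controlling the spectral mass immediately above zero requires an effective logarithmic regularity estimate for the spectral density function, critically exploiting the fact that $A$ has integer entries. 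This is precisely the quantitative refinement supplied by Löh and Uschold in~\cite{l2comp}; once it is inserted, the bound follows.
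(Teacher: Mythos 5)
The paper does not prove this proposition: it is stated as a direct adaptation of~\cite[Proposition~6.6]{l2comp}, with the surrounding text only explaining how to produce an adapted sequence algorithmically. So there is no internal proof to compare against.

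Your sketch does reproduce the standard shape of the Lück--Löh--Uschold argument: reduce to kernel dimensions, replace the spectral projection onto $\ker r_A$ with a polynomial $p_m$ in $B=A^2$ whose degree is small enough that adaptedness forces $\tr_{\mathcal N(\Gamma)} p_m(B) = \tr_{\mathcal N(\Gamma/\Gamma_k)} p_m(B_k)$, and then split the error at a cutoff $\eta$ into a tail $n\,p_m(\eta)$ plus the spectral mass $F(\eta)-F(0)$. The trace-matching step is handled correctly (the coefficient of $e$ in each diagonal entry of $A^{2j}$ is preserved under $\Gamma \surj \Gamma/\Gamma_k$ precisely because adaptedness excludes $\Gamma_k\setminus\{e\}$ from the support), and the cutoff decomposition is the right move. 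However, there is a genuine gap: the bound $F(\eta)-F(0)\lesssim n\log d/\log k$ is the whole content of the proposition --- it is the quantitative Fuglede--Kadison determinant estimate exploiting integrality of the entries --- and your argument simply defers it to~\cite{l2comp}. Without that lemma the reasoning is circular, since one is trying to reprove the very estimate one is invoking. Two smaller points: calling the discrepancy between $(1-1/(kd))^{\lfloor k^2/2\rfloor}$ and $(1-1/(kd))^{k^2}$ a ``constant factor'' is misleading (it is a square root of the stated bound), although it is indeed subdominant to the $\log d/\log k$ term and so does not affect the asymptotic rate; and one should note that $A$ need only be self-adjoint (not positive), so the polynomial must be taken in $A^2$ rather than $A$, which is indeed what forces the exponent to drop to roughly $k^2/2$ in your version.
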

Following~\cite[194]{l2}, we can bound the operator norm by $n\sqrt{2}\cdot \max_{i,j} \lVert A_{ij} \rVert_1$, where $\lVert \sum_{g \in \Gamma} \lambda_g g \rVert_1 \coloneqq \sum_{g \in \Gamma} |\lambda_g|$ is the $\ell^1$-norm on $\ZZ \Gamma$. If $\Gamma$ satisfies the Atiyah conjecture, as our group $S$ does, then the von Neumann rank over $\Gamma$ is an integer. To compute it, we choose $k$ such that the right hand side in Proposition~\ref{prop:quant-luck} is less than $1/2$, and then round the von Neumann rank over $\Gamma / \Gamma_k$, computed as in~(\ref{eq:vnrank-fin}), to the nearest integer.

An adapted sequence can be computed provided that we have a solution for the word problem in $\Gamma$ (that is, $S$): see the proof of~\cite[Lemma~6.4]{l2comp}. Since we represent elements of $S$ as elements of $G$, which is finitely presented and residually finite, the proof goes through in the same way, using the algorithm in~\cite[Theorem~5.3]{miller}. The latter consists of a parallel enumeration of relations of $G$ and homomorphisms to symmetric groups, in order to eventually certify that two elements are respectively equal or different; clearly, this is of theoretical interest only, owing to the overwhelming computational complexity. Moreover, the bound in Proposition~\ref{prop:quant-luck} is unfortunately much too loose, because of the term $1/\log k$. Again, this is no obstacle to experimentation.

\begin{remark}\label{rmk:selfadjoint}
    The requirement that $A$ be self-adjoint in Proposition~\ref{prop:quant-luck} is not generally met by our expanded combinatorial Laplacians. However, as noted in~\cite[Remark~2.2]{l2comp}, we may replace $r_A$ by $r_{AA^*}$ without changing the kernel as a module over $\neu(\Gamma)$, for the same reason as in more elementary contexts: if $f \in \neu (\Gamma)$, then $f f^* = 0 \implies f = 0$. Here we must point out that the \emph{adjoint} induced by the antipode of $\ZZ \Gamma$ is none other than the Hermitian adjoint for operators on $\ell^2(\Gamma)$.
\end{remark}

In conclusion, in order to compute the twisted $L^2$-Euler characteristic, we construct the $\ZZ G$-chain complex of the universal cover and the combinatorial Laplacians, in order to apply Proposition~\ref{prop:altsum-lapl2}. By Lemma~\ref{lemma:order-deg} it suffices to compute $\ord \det \Delta_n$ for each Laplacian $\Delta_n$. This can be done using the matrix expansion algorithm and Lück's theorem, with the error bound of Proposition~\ref{prop:quant-luck}. In the end, the problem reduces to computing the ranks of several matrices with integer entries. Hence, we have proved:
\begin{thm}
    There exists an algorithm that, given a finite $L^2$-acyclic CW complex $M$, such that its fundamental group $G$ is residually finite and satisfies the Atiyah conjecture, and a character $\phi\colon G \to \ZZ$, computes the twisted $L^2$-Euler characteristic $\chi^{(2)}(\til M; \phi)$.
\end{thm}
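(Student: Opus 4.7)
The plan is to chain together the various ingredients that have been set up throughout the paper, making sure each step is effective. First, I would construct the cellular $\ZZ G$-chain complex of $\til M$: since $M$ is a finite CW complex with finitely presented fundamental group $G$ (residual finiteness ensures finite presentability in our working setup), this complex consists of finitely many matrices with entries in $\ZZ G$, explicitly computable from the attaching maps of the cells. From there, I would form the combinatorial Laplacians $\Delta_n \in \mathrm{M}(k_n, \ZZ G)$.

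By Proposition~\ref{prop:altsum-lapl2},
\[
    \chi^{(2)}(\til M; \phi) = -\tfrac{1}{2} \sum_{n \ge 0} (-1)^n \cdot n \cdot \deg_\phi \det \Delta_n,
\]
so it suffices to compute each $\deg_\phi \det \Delta_n$. Since each $\Delta_n$ is self-adjoint, the lemma relating degree and order gives $\deg_\phi \det \Delta_n = -2 \ord_\phi \det \Delta_n$, reducing the task to the computation of orders. Applying Oki's matrix expansion algorithm with the effective expansion parameter $\mu = \ell n$ given by~\cite[Proposition~7.1]{oki}, each order is determined (after correcting by the monomial normalization factor $N$) by the rank over $\linn(K_\phi)$ of the expanded matrix $\Omega_\mu(\Delta_n)$, which has entries in $\ZZ K_\phi$ (or more precisely in $\ZZ S$ for some finitely generated subgroup $S \le K_\phi$ gathered from the coefficients).

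The remaining task is thus to compute, for a self-adjoint matrix $A$ with coefficients in $\ZZ\Gamma$ (obtained from $\Omega_\mu(\Delta_n)\Omega_\mu(\Delta_n)^*$ via Remark~\ref{rmk:selfadjoint}, to meet the self-adjointness hypothesis of Proposition~\ref{prop:quant-luck}), the von Neumann dimension $\dim_{\neu(\Gamma)} \image(r_A)$, with $\Gamma$ finitely presented and residually finite. Here I would invoke Lück's approximation theorem (Theorem~\ref{thm:luck-approx}) together with its quantitative refinement (Proposition~\ref{prop:quant-luck}): using the algorithm of Miller cited in the text, we can enumerate an adapted residual chain $(\Gamma_k)$, then compute $\dim_{\neu(\Gamma/\Gamma_k)} \image(r_{A_k})$ as $[\Gamma:\Gamma_k]^{-1}$ times the $\CC$-rank of a finite integer matrix. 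The explicit error bound tells us when the approximation is within, say, $1/(10 \cdot k_n)$ of the true value, after which we round. Because the Atiyah conjecture ensures that the $\phi$-twisted $L^2$-Betti numbers are integers (Theorem~\ref{thm:DG}~(2)), the resulting $\chi^{(2)}(\til M; \phi)$ is itself an integer, so any sub-unit approximation of each summand propagates to an exact integer answer after the final rational combination in Proposition~\ref{prop:altsum-lapl2}.

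The hard part of making this rigorous is entirely the effectivity of the Lück approximation step: the bound in Proposition~\ref{prop:quant-luck} is governed by a $1/\log k$ term and an operator norm $d$ which must itself be bounded effectively, so the algorithm is theoretically fine but impractically slow. One must also verify that the construction of an adapted sequence in the sense of Definition~\ref{def:adapted} is genuinely algorithmic, which reduces to solving the word problem in $\Gamma$ — for this, $\Gamma$ embeds in a finitely presented residually finite group (indeed $G$), so the parallel enumeration of~\cite[Theorem~5.3]{miller} applies and terminates. Every other step — building the chain complex, forming Laplacians, expanding matrices, computing integer ranks — is standard finite linear algebra, so once the Lück-approximation certification is in place, all pieces assemble into a single halting procedure that outputs $\chi^{(2)}(\til M; \phi)$.
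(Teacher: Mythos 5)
Your proposal is correct and follows essentially the same path as the paper, which proves this theorem simply by pointing back to the preceding sections (build the Laplacians, apply Proposition~\ref{prop:altsum-lapl2}, convert degrees to orders via the self-adjointness lemma, run Oki's matrix expansion with the effective bound $\mu = \ell n$, and certify the resulting rank computation with the quantitative Lück approximation of Proposition~\ref{prop:quant-luck} via Remark~\ref{rmk:selfadjoint}). One small slip: residual finiteness does \emph{not} imply finite presentability in general — finite presentability of $G$ here comes directly from $M$ being a finite CW complex, not from residual finiteness (the latter is needed only for Lück approximation and Miller's word-problem algorithm).
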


\section{Implementation details}\label{sec:impl}
An implementation of our algorithm is available on GitHub~\cite{git}. It relies on the \textsf{SageMath} software system~\cite{sagemath} and its interface with \textsf{GAP}~\cite{GAP4}, especially the \textsf{HAP} computational homotopy package~\cite{HAP}. We also use \textsf{Regina}~\cite{regina} and \textsf{SnapPy}~\cite{snappy} for testing purposes.

In what follows, we give further explanations of the internal workings of our algorithm.

\subsection{Chain complex construction}

The \textsf{SageMath} system allows for computations in finitely presented groups and their group algebras. Hence, we can represent the differentials of the $\ZZ G$-chain complex of our space $M$ as matrices over $\ZZ G$. In practice, we carry out computations over $\ZZ F$ or $\QQ F$, where $F$ is a free group with the same number of generators as $G$; this is because some ring operations in $\ZZ G$ cause \textsf{SageMath} to attempt to solve the word problem for $G$.

We start by representing $M$ as a \emph{regular CW complex}, that is, we require that every \emph{closed} cell is injectively included into $M$ via its attaching map. This can be ensured by taking the barycentric subdivision of a triangulation of $M$, which is easily obtained using \textsf{Regina} and \textsf{SnapPy} in the case of $3$-manifolds. Given adjacency information between $(i+1)$- and $i$-cells, we can use the \textsf{HAP} method \textsf{ChainComplexOfUniversalCover} to construct the aforementioned differentials.

\subsection{The matrix expansion algorithm}\label{sec:impl-oki}

After constructing the Laplacians, the next major step is applying Oki's algorithm. Recall the problem from the previous section: given a square matrix $A \in \mathrm{M}(n, R) \cap \mathrm{GL}(n, F)$, where $R \defeq D_t[u^{\pm 1}]$ and $F$ is the Ore quotient skew field of $R$, compute $\ord \det A$. The algorithm involves a sequence of matrices over $D$ of increasing size (whence the name), whose ranks converge to $\ord A$.

\begin{remark}
    In our case, any element $a = \sum_{g \in G} a_g g\in \ZZ G$ can be seen as a skew Laurent polynomial in \textsf{SageMath}, by choosing an element $u \in \phi^{-1}(G)$ and collecting monomials with the same exponent of $u$:
        \begin{equation}
            a = \sum_{d \in \ZZ} \Bigg[ \sum_{g \in \phi^{-1}(d)} a_g \cdot gu^{-d} \Bigg] u^d.
        \end{equation}
    Clearly, every coefficient of the resulting polynomial is in $\ZZ[K_\phi]$.
\end{remark}

First, we assume that $A$ does not contain any monomials with negative exponents. In fact, this in ensured by multiplying rows with appropriate powers of $u$ and keeping track of the total multiplier $u^N$; at the end of the computation, we subtract $N$ to get the correct order. In this way, $A$ can be written as
\begin{equation}
    A = \sum_{d = 0}^\ell A_d u^d,  \heqsep A_d \in \mathrm{M}(n, D).
\end{equation}
Consequently, for all $i \ge 0$,
\begin{align}
    u^i A &= \sum_{d = 0}^\ell u^i A_d u^d
\\  &= \sum_{d = 0}^\ell (u^i A_d u^{-i})\cdot u^{d+i}
\\  &= \sum_{d = i}^{\ell+i} (u^i A_{d-i} u^{-i})\cdot u^d,
\end{align}
where $A_d^{(i)} \defeq u^i A_{d-i} u^{-i} \in \mathrm{M}(n,D)$, since conjugation by $u$ is an automorphism of $D$. 

Now, choose an integer \emph{expansion parameter} $\mu \ge 1$ and construct the block matrix
\begin{equation}
    \Omega_\mu(A) \defeq
    \begin{bNiceMatrix}
        A_0^{(0)} & A_1^{(0)} & \dots & \dots & \dots & A_{\mu-1}^{(0)} \\
                0 & A_1^{(1)} & A_2^{(1)} & \dots & \dots & A_{\mu-1}^{(1)} \\
            \vdots &     \ddots & \ddots & \ddots & & \vdots \\
            \vdots &           & \ddots & \ddots & \ddots & \vdots \\
            \vdots &           &       & \ddots & \ddots & A_{\mu-1}^{(\mu-2)} \\
                0 &     \dots & \dots & \dots &     0 & A_{\mu-1}^{(\mu-1)}
    \end{bNiceMatrix}
    \in \mathrm{M}(\mu n, D).
\end{equation}
Lastly, define
\begin{equation}
    \omega_\mu(A) \defeq \rk \Omega_\mu(A),
    \heqsep \psi_\mu(A) \defeq \mu n - \omega_\mu(A).
\end{equation}

\begin{prop}\label{prop:concave-mu}
    The sequence $(\psi_\mu(A))_\mu$ is non-decreasing, concave (i.e.\ its first differences are non-increasing) and eventually equal to $\ord \det A$. In particular, if $M$ is an upper bound on $\ord \det A$, then the limit value is attained by $\mu = M$.
\end{prop}
\begin{proof}
    Firstly, we have $\psi_{d+1}(A) - \psi_d(A) = n - (\omega_{d+1}(A) - \omega_d(A))$. By~\cite[(36)+(33)]{oki}, the difference $\omega_{d+1}(A) - \omega_d(A)$ can be expressed as a quantity $N_d \le n$ non-decreasing in $d$. Therefore, $(\psi_\mu(A))_\mu$ is a non-decreasing concave sequence.

    The second part follows directly from~\cite[Lemma~6.5]{oki}. 
\end{proof}
Importantly,~\cite[Proposition~7.1]{oki} gives an effective upper bound $M = \ell n$, where $\ell$ is the maximum exponent of $u$ appearing in $A$. Hence, we have an algorithm to reduce the computation of $\ord \det A$ to a rank computation over $D$.

The \emph{matrix expansion algorithm} runs in polynomial time in $n$ and $\ell$, provided that elementary arithmetic operations in the skew field $D$ cost $O(1)$.

\begin{remark}
    Oki's article describes another algorithm, the \emph{combinatorial relaxation algorithm}, which is more efficient if the complexity of matrix multiplication is at least $O(n^{2.25})$. Unfortunately, it requires division in $D$, which is inconvenient in our case $D = \linn(\ker \phi)$. Instead, the above algorithm works entirely inside the group ring $\ZZ G$ up to the rank computation.
\end{remark}

\begin{remark} \label{rmk:mu-separate}
    About the expansion parameter $\mu$, Proposition~\ref{prop:concave-mu} suggests that we run the algorithm at $\mu=1,2,3\dots$ until the valuations of the determinants stop increasing. An even better strategy is to take $\mu = 1,2,2^2,2^3,\dots$, and then optionally perform a binary search to find the smallest $\mu$ at which valuations stop increasing. This value may differ for each Laplacian; however, for the sake of simplicity, our implementation applies the same $\mu$ to all Laplacians.
\end{remark}

\subsection{Finite quotients}\label{sec:finquot}
Since the quantitative version of Lück's approximation theorem cannot be used in practice, our strategy is to compute a residual chain and heuristically infer the limit value of the von Neumann rank from the approximating sequence in Theorem~\ref{thm:luck-approx}. As the efficiency of the simple algorithm in Remark~\ref{rmk:residual-chain} is abysmal, we are urged to find a shortcut.

Looking at the statement of Lück's theorem, it is natural to search for large finite quotients of $S$, in the hope that the kernels of the associated epimorphisms are part of a residual chain. Given a finitely presented group, a prime number $p$ and an integer $c \ge 0$, the \textsf{GAP} function \textsf{EpimorphismPGroup} returns the largest $p$-group quotient having nilpotency class $c$. This can be used to construct a function \textsf{finite\_quotient}, taking as input a finitely presented group and a finite list of classes $c = (c_2, c_3, c_5, c_7, \dots)$, and returning the product of all $p$-quotients, each of class $c_p$. By applying \textsf{finite\_quotient} to $G$ and then restricting the epimorphism to $S$, we obtain the desired quotient $S \twoheadrightarrow L$.

\begin{remark}
    This method exposes sizable quotients quickly, but it may not produce a residual system of subgroups even if we let $c$ range over all possible classes. Since the quotients we obtain in this way are always nilpotent, some examples are given by groups that are not residually nilpotent, such as the \emph{modular group} $\mathrm{PSL}(2, \ZZ) \simeq \ZZ_2 * \ZZ_3$ or some free-by-cyclic groups~\cite{res-nilp}. On the other hand, it seems that the resulting kernels are deep enough to approximate the von Neumann rank in every case we examined.
\end{remark}

\begin{remark}\label{rmk:l2-untwisted}
    In order to compute the classical $L^2$-Betti numbers, we can skip the matrix expansion step and use \textsf{finite\_quotient} as a practical way to generate large finite quotients for Lück's approximation theorem. This gives the von Neumann ranks of the differentials, from which the $L^2$-Betti numbers can be obtained via the rank-nullity formula.
\end{remark}

It is also worth noting that the process of Remark~\ref{rmk:mu-separate} becomes heuristic if this method is used, as the former depends on the quality of the Lück approximation step: roughly speaking, if $L$ is large enough, the computed rational value will be close to an integer, which can be inferred to be ``underlying'' value of the order valuation. Hence, in general, manual inspection may be required.

This rounding strategy works whenever the error in the approximation is less than $1/2$. However, intuition based on commutative rings suggests that the rank decreases when passing to a finite approximation (while the valuation increases); see also Funke and Kielak's \emph{determinant comparison problem}~\cite[Question~3.6]{kielak-hab}. Hence, if a stronger heuristic is necessary, we can round up the computed ranks, allowing for an error up to $1$. Following~\cite{lewin-groups}, if $G$ is a (locally indicable) \emph{Lewin group}, then this intuition is correct: every subgroup $S < G$ is also Lewin and the rank $\rk_{\linn(S)}$ is an upper bound for $\rk_{\{1\}}$, that is, the rank arising from the trivial quotient of $S$. By~\cite[Corollary~4.3]{lewin-groups}, the inequality holds for every finite quotient of $S$. It is worth noting that locally indicable free-by-cyclic groups, and conjecturally all locally indicable groups, are Lewin~\cite[Theorem~1.1, Conjecture~1]{lewin-groups}.

\subsection{Non-determinism}\label{sec:nondet}

The internal representation of the homomorphism $\phi$ depends on the presentation of G, and ultimately on \textsf{GAP} internal workings, in its treatment of CW complex objects. The \textsf{GAP} method \textsf{edgeToWord} allows one to relate the basis of $\Hom(G, \ZZ)$ in which $\phi$ is expressed to concrete $1$-cycles on a given CW complex; however, the \textsf{GAP} code loses track of edges during simplification steps, which must then be skipped. In most cases, we have chosen to dispense with this refinement, since doing so only affects the shape of the resulting polytope by a linear transformation.

An additional source of uncertainty is in the construction of the triangulations. Indeed, we often rely on \textsf{Regina} and, in dimension $3$, \textsf{SnapPy} to construct and simplify these structures with non-deterministic algorithms. Hence, simpler $G$-CW complex structures may arise from different runs of the routines.

\section{Examples and applications}

In this section we summarize experimental data on $L^2$-invariants while simultaneously showcasing the algorithm's various aspects; all the experiments are also available as \textsf{SageMath} notebooks in the GitHub repository~\cite{git}. All examples under consideration have residually finite fundamental group, and most are known to satisfy the Atiyah conjecture.

For the sake of brevity, we will use an alternate product-like notation $2^{c_2}\cdot 3^{c_3}\cdot 5^{c_5}\cdot \dots$ for the nilpotency class vector $c=(c_2, c_3, c_5, \dots)$. Moreover, we will often represent characters $\phi$ by their coordinates in some fixed basis of $\Hom(G, \ZZ)$, denoted by $\{\mathsf{v}_i\}$ in figures (see Subsection~\ref{sec:nondet}). Reported running times refer to computations carried out on an Intel i9-12900HX CPU.

\subsection{Convergence rate of finite approximation}
To test the convergence given by Lück's approximation theorem, we apply the untwisted algorithm to a \emph{naturally occurring} matrix. More specifically, let $M$ be the census hyperbolic $3$-manifold $\mathrm{v1539(5,1)}$. 
We use \textsf{HAP} to compute the fundamental group
\begin{equation}
    G\defeq \langle a,b,c\mid
    (a^{2}c^{-5}b^{-3})^{4}a^{2}b^{2},
    a^{2}c^{-5}b^{-2}c,
    c^{9}a^{-3}b^{3}c^{-3}\rangle
\end{equation}
and a $\ZZ G$-chain complex for $\widetilde M$, of the form
\begin{equation}
\begin{tikzcd}
\ZZ G \arrow[r] & \ZZ G^3 \arrow[r, "A"] & \ZZ G^3 \arrow[r] & \ZZ G.
\end{tikzcd}
\end{equation}
Recall that by~\cite[Theorem~1.62]{l2} the above chain complex is $L^2$-acyclic. Therefore, a routine calculation shows that the von Neumann ranks of the three differentials must be $1, 2, 1$. We can test our algorithm on the second differential $A$ with various nilpotency class vectors $c$, expecting it to approximate $\rk A = 2$ (Table~\ref{tab:mat-rank}).

Empirically, the algorithm spends most of its time in the final rank computation over $\ZZ$ or $\QQ$; this suggests a time complexity superquadratic in $|L|$, which is experimentally confirmed. It is therefore essential to keep $|L|$ as low as possible.

Looking at $p$-group quotients, we see that when $p \ne 5$, the error is exactly $1/|L|$; this suggests a trend that holds for all but a finite number of \emph{special} primes depending on the group $G$. Indeed, choosing $c = 5$, $5^2$ and even $2\cdot 3 \cdot 5$ severely degrades the accuracy, even incurring a large computational cost in the latter case.
This behavior can possibly be traced to the matrix $A$ involving only finitely many group elements; we also speculate that the special primes are related to the set of primes $p$ for which $G$ is not residually a (finite) $p$-group.

It also seems that combining two or more non-special primes could be detrimental, as in the case $c = 2\cdot 3$. Hence, we will generally try single primes first, in order to find the special primes, and only then use different primes together.

\begin{table}
\begin{center}
\begin{tabular}{lrS[table-format=1.5]rr} \toprule
    $c$ & $|L|$ & {Output $r$} & $|L|\cdot (2-r)$ & {Time (ms)} \\ \midrule
$2$   &      $4$&   1.75&      1 &       62 \\
$3$   &      $9$&   1.88889&   1 &       32 \\
$5$   &      $25$&   1.64&      9 &       40 \\
$7$   &      $49$&   1.97959&   1 &       55 \\
$11$  &     $121$&   1.99174&   1 &      140 \\
$13$  &   $169$&   1.99408&   1 &      203 \\
$17$  & $289$&   1.99654&   1 &      551 \\ \midrule
$2^2$ &              $32$&   1.96875&   1 &       75 \\
$3^2$ &           $243$&   1.99588&   1 &      609 \\
$5^2$ &        $3125$&   1.99072&  29 &  133 111 \\ \midrule
$2\cdot 3$ &            $36$&   1.86111&   5 &       40 \\
$3\cdot 7$ &       $441$&   1.99773&   1 &    1 150 \\
$2\cdot 3 \cdot 5$ &         $900$&   1.98556&  13 &    4 877 \\
\bottomrule
\end{tabular}
\end{center}
\caption{Rank of $A$ as computed by the algorithm with nilpotency class vector $c$, rounded to five decimal places. Multiplying the error with $|L|$ (smaller is better) reveals an inverse proportionality phenomenon. }\label{tab:mat-rank}
\end{table}

\subsection{Convergence rate of the whole algorithm}
This time, we apply the full algorithm to the $3$-manifold $M$ defined as the complement of the Borromean rings (Thistlethwaite notation L6a4, see Figure~\ref{fig:borromean-link}). Since $M$ is hyperbolic and therefore $L^2$-acyclic, we can compute the twisted $L^2$-Euler characteristic of a primitive character and study the dependence of the approximation on the expansion parameter $\mu$.

\begin{figure}[ht]
    \centering
    \scalebox{1.0}{
    \begin{tikzpicture}[use Hobby shortcut,
    pics/arrow/.style={code={\draw[
        line width=0pt,
        {Computer Modern Rightarrow[
            line width=0.8pt,width=1.5ex,length=1ex,#1]}-] 
        (-0.5ex,0) -- (0.5ex,0);
    }},
    pics/arrowr/.style={code={
    \begin{scope}[xscale=-1,yscale=1]
     \draw[
        line width=0pt,
        {Computer Modern Rightarrow[
            line width=0.8pt,width=1.5ex,length=1ex,#1]}-] 
        (-0.5ex,0) -- (0.5ex,0);
    \end{scope}
    }},
    ]
    \begin{knot}[
    clip width = 7,
    scale = 1.2
    ]
    \def\cs{1.1}
    \def\csy{\cs*sqrt(3)*0.5}
    \strand[blue1, very thick]
    ([closed]{1+0.5*\cs},{\csy}) .. ({2+0.5*\cs},{1+\csy}) ..
    ({1+0.5*\cs},{2+\csy}) .. 
    pic[pos=0.667,sloped]{arrow={scale=2,line width=1pt}}({0.5*\cs},{1+\csy}) .. 
    ({1+0.5*\cs},{\csy});
    \strand[blue2, very thick]
    ([closed]1,0) ..
    pic[pos=0,sloped]{arrowr={scale=2,line width=1pt}} (0,1) ..
    (1,2) .. (2,1) .. (1,0);
    \strand[blue3, very thick]
    ([closed]{1+\cs},0) .. ({2+\cs},1) ..
    pic[pos=0.333,sloped]{arrow={scale=2,line width=1pt}}({1+\cs},2) ..
    ({\cs},1) .. ({1+\cs},0);
    \flipcrossings{3,4}
    \node[blue1!70!black] at (0.35,2.5) {\Large $\lambda_1$};
    \node[blue2!75!black] at (1.15,-0.3) {\Large $\lambda_2$};
    \node[blue3!80!black] at (3.2,1.75) {\Large $\lambda_3$};
    \node at (3.46,-0.54) {};
    \node at (-0.36,3.18) {};
    \end{knot}
    \end{tikzpicture}
    }
    \caption{The Borromean rings.}
    \label{fig:borromean-link}
\end{figure}

As before, we find a presentation for the fundamental group
\begin{equation}
G \defeq \pi_1(M) = \langle a, b, c \mid c^{-1}b^{-1}cac^{-1}a^{-1}baca^{-1} , b^{-1}abc^{-1}b^{-1}ca^{-1}c^{-1}bc\rangle,
\end{equation}
and a rather simple $\ZZ G$-chain complex of the form
\begin{equation}
\begin{tikzcd}
\ZZ G^2 \arrow[r, "d_2"] & \ZZ G^3 \arrow[r, "d_1"] & \ZZ G.
\end{tikzcd}
\end{equation}

The rank of $G$ is $3$, so we arbitrarily fix a character $\phi$ defined by $(a,b,c) \mapsto (0,0,1)$. First, we try class-$1$ $p$-quotients using $\mu = 6$, for $p$ ranging through the first $10$ primes. In every run the finite quotient has size $|L| = p^2$, and is therefore abelian, for all three Laplacians. Furthermore, the computed valuations $v_i$ and determinant degrees $\delta_i$ are
\begin{equation}
    v_0 = 0,\heqsep v_1 = \frac{12}{p},\heqsep v_2 = \frac{8}{p},\heqsep \delta_0 = 2,\heqsep \delta_1 = 6-\frac{24}{p},\heqsep \delta_2 = 8-\frac{16}{p},
\end{equation}
leading to computed values of $\chi^{(2)}(\til M; \phi) = -1+\frac 4p$. This suggests that all the $v_i$ are in fact $0$, so we could have chosen $\mu=1$, greatly reducing computational time.

We take advantage of this observation and run our code again, with larger quotients and $\mu=1$. Whenever only primes with exponent $1$ are involved, the corresponding $|L|$ is always the squared product of the primes, while the error is proportional to $|L|^{-1/2}$. Further tests show that the coefficient of $|L|^{-1/2}$ for $\mu = 1,2,3,4,5,6$ is respectively $2,4,6,8,6,4$. This suggests that, by choosing $\mu$ as low as possible, we minimize the error (and the computational time due to matrix expansion).
If class-$2$ $p$-quotients are involved, the proportionality holds only approximately, with a coefficient around $10$ for $p=2$ and $20$ for $p=3$.

Hence, the most efficient finite quotients appear to be by far the ones involving only class-$1$ $p$-groups: in this case, $L$ is a subgroup of a product of $1$-nilpotent $p$-groups, and therefore abelian.
This is somewhat surprising, in that kernels of abelian quotients cannot form a residual chain, as they all contain the derived subgroup: effectively, we are approximating the twisted $L^2$-Euler characteristic of the quotient chain complex with coefficients in $\ZZ[\ab(G)]$. In fact, this phenomenon is related to the Alexander norm, which can also be computed from the latter complex, as the following result shows:

\begin{prop}\label{prop:alex-ab}
    Let $M$ be a compact orientable $3$-manifold whose boundary is a union of zero or more tori and whose Alexander polynomial is non-zero. Let $\phi \colon \pi_1(M) \surj \ZZ$ be a primitive character and let $\chi^{(2)}(\til M; \ab(G), \phi)$ be the twisted $L^2$-Euler characteristic of the $\ZZ[\ab(G)]$-chain complex $C_\blt^{\ab}$, i.e.\ the cellular chain complex associated to the $\ab(G)$-covering of $M$. The latter is related to the Alexander norm of $\phi$ via:
    \[
        -\chi^{(2)}(\til M; \ab(G), \phi) = \norm{\phi}_A - \begin{cases}
            0 & \text{if $b_1(M) \ge 2$,} \\
            1 & \text{if $b_1(M) = 1$ and $\partial M \ne \varnothing$,} \\
            2 & \text{if $b_1(M) = 1$ and $\partial M = \varnothing$.} \\
        \end{cases}
    \]
\end{prop}
\begin{proof}
    See Appendix~\ref{sec:alex-ab-proof}; compare also~\cite[Theorem~8.4]{l2thur}.
\end{proof}
Hence, when the Alexander and Thurston norms do not agree (up to the correction term in Proposition~\ref{prop:alex-ab}), using only abelian quotients is destined to fail. For many link complements, including the Borromean rings, equality holds: see~\cite[Section~7]{mcmullen} for a more exhaustive discussion.

\begin{remark}
This result can be generalized to quotients coming from the \emph{rational derived series} $(G_r^{(k)})$ of $G$, provided that the induced $\ZZ[G/G_r^{(k)}]$-chain complex is $L^2$-acyclic. In that case, following~\cite[Section~8]{l2thur}, the twisted $L^2$-Euler characteristic is minus the $k$-th \emph{higher-order Alexander norm} of Harvey~\cite{higher-alex}. (More inequalities between these generalized Alexander norms and the Thurston norm are proved in~\cite[Theorem~2.29]{fbc-bns} and~\cite[Section~6.2]{kielak-sun}.) Our finite quotients are instead approximating the \emph{lower central series}, a similar but distinct construction.
\end{remark}

\begin{remark}
    Compared to the untwisted case, the error decays more slowly in $|L|$, which is the main driver of the running time. We expect this pattern to hold in greater generality.
\end{remark}

\subsection{A few Thurston norm unit balls}
We shall now compute the entire unit balls of the Thurston norms for a few manifolds, including the already mentioned $\mathrm{v1539(5,1)}$ and the Borromean rings complement. This can be done iteratively via Proposition~\ref{prop:thurston-l2}:
\begin{itemize}
    \item compute $-\chi^{(2)}(\til M; \phi)$ for some values of $\phi$;
    \item plot the corresponding points on the unit sphere of the Thurston seminorm;
    \item manually infer a possible shape for the unit ball $B$;
    \item compute $-\chi^{(2)}(\til M; -)$ for all vertices of $B$ and for one point in the interior of each face of $B$, expecting a result of $1$;
    \item if unsuccessful, retry with more points.
\end{itemize}
\subsubsection{The Borromean rings complement}
This is one of the examples in Thurston's original article~\cite{thurston}, where he shows that the unit ball is a regular octahedron up to linear isomorphisms. We will continue to use the $\ZZ G$-chain complex we found in the previous subsection.

\begin{remark}
    The following non-rigorous argument can be used to choose the finite quotient. We heuristically assume that the error in the computation of $\chi^{(2)}(\til M; \phi)$ is bounded above by $20\cdot |L|^{-1/2}$, where $|L|$ is the largest quotient pertaining to any of the three Laplacians. Suppose that the algorithm outputs a value $m'$ within $1/4$ of an integer $m$; then $m$ is the unique integer within distance $3/4$ of $m'$. By the heuristic error bound, we want $3/4 > 20\cdot |L|^{-1/2}$, that is, $|L| \ge 712$; if this is the case, we round the output to $m$. We use the same argument to round the individual valuations $v_i$ to the nearest integers.
\end{remark}

We can accomplish this with $c = 29$; of course, choosing abelian quotients is only acceptable to the extent that the Thurston norm equals the Alexander norm for this manifold. According to the discussion in the previous subsection, it is crucial to choose $\mu$ as small as possible (as in Remark~\ref{rmk:mu-separate}), both for speed and to create more leeway in the error bound. We start by applying the algorithm to a few values of $\phi$ with $\mu=1,2$ (Table~\ref{tab:borromean-ball-1}), finding that the 
rounded valuations do not change between $\mu=1,2$; hence, we deduce that $\mu=1$ suffices. 

\begin{table}
\begin{center}
\begin{tabular}{ccccc} \toprule
    $\phi$ & {$-\chi^{(2)}(\til M; \phi)$} \\
    \midrule
    $(1,0,0)$ & 1 \\
    $(0,1,0)$ & 1 \\
    $(0,0,1)$ & 1 \\
    \bottomrule
\end{tabular}
\end{center}
\caption{A few trials of the algorithm with $c=29$. Each computed value is rounded to the unique integer within distance $1/4$. The maximum $|L|$ is always $29^2 > 712$. Total computational time was around $1$ minute.}\label{tab:borromean-ball-1}
\end{table}

Therefore, the three basis vectors all have Thurston norm $1$; by symmetry, we get six lattice points on the unit sphere. Thurston's argument for this manifold shows that the only six unit norm lattice points are the vertices of the (octahedral) unit ball, represented by the three link components and their opposites. Hence, expecting to find eight points of norm $3$ at $(\pm 1, \pm 1, \pm 1)$, we run the algorithm once again (see Table~\ref{tab:borromean-ball-2}).

\begin{table}
\begin{center}
\begin{tabular}{cccccc} \toprule
    $\phi$ & $\mu$ & {$v_0$} & \multicolumn{1}{c}{$v_1$} & \multicolumn{1}{c}{$v_2$} & \multicolumn{1}{c}{$-\chi^{(2)}(\til M; \phi)$} \\ \midrule
    $(1,1,1)$  & 1 & 0 & 1 & 0 & 3 \\
    $(-1,1,1)$ & 2 & 0 & 2 & 0 & 3 \\
    $(1,-1,1)$ & 2 & 0 & 2 & 0 & 3 \\
    $(1,1,-1)$ & 2 & 0 & 2 & 0 & 3 \\
    \bottomrule
\end{tabular}
\end{center}
\caption{Four lattice points taken from the cones over open faces of the octahedron. Outputs are already integer numbers and $|L|$ is always $29^2$. We report the smallest $\mu$ at which Oki's algorithm attains its limit value. Total computational time was around $3$ minutes.}\label{tab:borromean-ball-2}
\end{table}
Again by symmetry, we get that each point in $\{(\pm 1/3, \pm 1/3, \pm 1/3)\}$ (lying in the interior of a face of the octahedron) has unit norm. This already determines the unit ball completely (Figure~\ref{fig:borromean-ball}); the shape is consistent with further runs on the remaining lattice points with coordinates in $\{-1,0,1\}$.
\begin{figure}
    \centering
    \scalebox{0.8}{
    \hspace{-4pt}\includegraphics{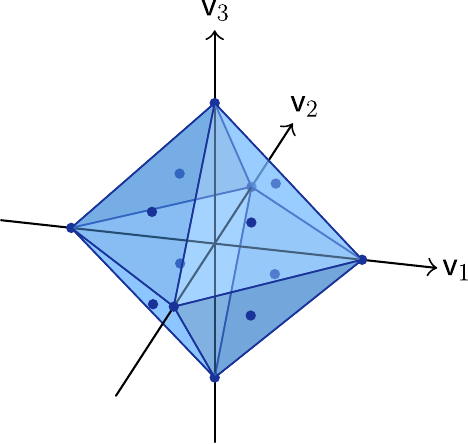}%
    }
    \caption{The unit ball is determined by the $14$ marked points.}
    \label{fig:borromean-ball}
\end{figure}

\subsubsection{The L10n14 link complement}
Next, we test the algorithm on the complement $M$ of the two-component link L10n14 (Figure~\ref{fig:L10n14-balls}), introduced by Dunfield~\cite{dunfield} as a counterexample to the equality of Thurston and Alexander unit ball faces.

\begin{figure}
    \centering
    \begin{tabular}{cc}
        \raisebox{-0.5\height}{
            \scalebox{1}{
            \begin{tikzpicture}[use Hobby shortcut,
                pics/arrow/.style={code={\draw[
                    line width=0pt,
                    {Computer Modern Rightarrow[
                        line width=0.8pt,width=1.5ex,length=1ex,#1]}-] 
                    (-0.5ex,0) -- (0.5ex,0);
                }},
                pics/arrowr/.style={code={
                \begin{scope}[xscale=-1,yscale=1]
                 \draw[
                    line width=0pt,
                    {Computer Modern Rightarrow[
                        line width=0.8pt,width=1.5ex,length=1ex,#1]}-] 
                    (-0.5ex,0) -- (0.5ex,0);
                \end{scope}
                }},
            ]
            
            \begin{knot}[
            consider self intersections,
            clip width = 7,
            scale = 0.4,
            end tolerance = 0.1pt
            ]
            \def\eps{0.3}
            \strand[blue2, very thick]
            ([closed]2,{2-\eps}) ..
            (2,{6+\eps}) ..
            (6,10.8) ..
            (7,{10+\eps}) ..
            (8,{8+\eps}) ..
            ({8-0.25*\eps},{6+0.25*\eps}) ..
            (4,4) ..
            ({6+0.25*\eps},{8-0.25*\eps}) ..
            ({8+\eps},8) ..
            ({10+\eps},7) ..
            (10.8,6) ..
            pic[pos=0.6,sloped]{arrowr={scale=2,line width=1pt}}({6+\eps},2) ..
            ({2-\eps},2) ..
            (0,4) ..
            ({2+\eps},6) ..
            (5,5) ..
            (6,{2+\eps}) ..
            (4,0) ..
            (2,{2-\eps});
            \strand[blue1, very thick]
            (8,{10+\eps}) arc (90:450:{2+\eps}) pic[pos=0.875,sloped]{arrow={scale=2,line width=1pt}};
            \flipcrossings{2,3,5,6}
            \node[blue2!75!black] at (9.9,2.2) {\Large $\lambda_2$};
            \node[blue1!70!black] at (10.3,10.45) {\Large $\lambda_1$};
            \node at ({-0.5*\eps},{-0.5*\eps}) {};
            \end{knot}
            \end{tikzpicture}
            }
        } & 
        \raisebox{-0.5\height}{
            \scalebox{1}{
            \hspace{-4pt}\includegraphics{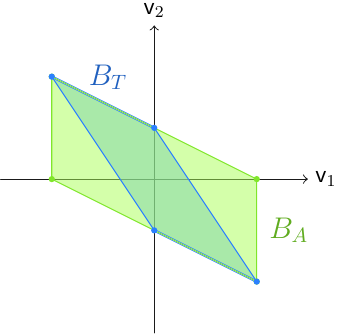}%
            }
        }
    \end{tabular}
    \caption{The two-component link L10n14 and the unit balls for its Thurston and Alexander norms.}
    \label{fig:L10n14-balls}
\end{figure}

As always, we first compute a presentation for the fundamental group
\begin{equation}
    G \defeq \langle a,b \mid a(ba^{-1}b^{-2}a^{-1})^2ba^3ba^{-1}b^{-2}a^{-1}ba^{-1}b^{-1}(ab^2ab^{-1})^2a^{-3}b^{-1}ab^2ab^{-1} \rangle
\end{equation}
and a $\ZZ G$-chain complex, of the form
\begin{equation}
\begin{tikzcd}
\ZZ G \arrow[r, "d_2"] & \ZZ G^2 \arrow[r, "d_1"] & \ZZ G.
\end{tikzcd}
\end{equation}

The presentation for $G$ is the same as the one given directly by \textsf{SnapPy}, up to a cyclic permutation of the relator. Hence we can access information about the link meridians, obtaining that the cohomology classes of the components $\lambda_1, \lambda_2$ are respectively $(1,-1)$ and $(-1,0)$ in our basis. We can also compute the Thurston norm using an algorithm by Friedl, Tillmann and Schreve for two-generator one-relator groups~\cite{thurst-fox}, verifying the strict inclusion between the two norm balls (again Figure~\ref{fig:L10n14-balls}).

Specifically, on a few selected points, the Thurston and Alexander norms take on the values in Table~\ref{tab:l10n14-norms}, which completely determine the Thurston norm.
The last three values are shared by the Alexander norm; indeed, for $\phi = (0,1),(-1,2),(-1,1)$ our algorithm gives consistent results across several finite quotients, both abelian and non-abelian.

\begin{table}
    \centering
    \begin{tabular}{ccc}\toprule
        $\phi$ & $x_M(\phi)$ & $\norm{\phi}_A$ \\ \midrule
         $(1,0)$ & 3 & 1 \\
         $(0,1)$ & 2 & 2 \\
        $(-1,2)$ & 3 & 3 \\
        $(-1,1)$ & 1 & 1 \\ \bottomrule
    \end{tabular}
    \caption{A few points in $H^1(M; \ZZ)$ and their Thurston and Alexander norms.}
    \label{tab:l10n14-norms}
\end{table}

On the other hand, the two norms differ for $\phi = (1,0)$: accordingly, we expect the Thurston norm to be harder to approximate, requiring highly non-abelian quotients. As even $c = 2^2$ gives an abelian $L \simeq \ZZ_2 \times \ZZ_4$, we anticipate a very demanding computation, due to the fast growth of the order of $L$ with respect to the nilpotency class.

In the end, it is hard to get very close to the true value of the Thurston norm, even with $30$ minutes of computational time; however, the results (collected in Table~\ref{tab:l10n14-hard}) are still consistent with Table~\ref{tab:l10n14-norms}.

\begin{table}
    \centering
    \begin{tabular}{lrS[table-format=1.0]S[table-format=1.3]S[table-format=1.3]S[table-format=1.0]S[table-format=1.3]S[table-format=1.3]S[table-format=1.3]S[table-format=4.2]}\toprule
        $c$ & $|L|$ & {$v_0$} & {$v_1$} & {$v_2$}
        & {$\delta_0$} & {$\delta_1$} & {$\delta_2$}
        & {$-\chi^{(2)}(\til M; \phi)$} & {Time (s)} \\ \midrule
        $2$ & 2    & 0 & 4 & 2 & 2 & 6 & 4 & 1 & 0.23 \\
        $2^2$ & 8    & 0 & 4 & 2 & 2 & 6 & 4 & 1 & 0.17 \\
        $2^3$ & 128  & 0 & 3 & 1 & 2 & 8 & 6 & 2 & 0.51 \\
    $3^3$   & 2187 & 0 & 2.667 & 0.667 & 2 & 8.667 & 6.667 & 2.333 & 76 \\
        $2^4$ & 8192 & 0 & 2.5 & 0.5 & 2 & 9 & 7 & 2.5 & 1726 \\ \midrule
        \emph{limit}
            & {--} & 0 & 2 & 0 & 2 & 10 & 8 & 3 & {--} \\
        \bottomrule
    \end{tabular}
    \caption{Results obtained from the point $\phi = (1,0)$ with $\mu = 2$, including the inferred limit values. Outputs are rounded to three decimal places.}
    \label{tab:l10n14-hard}
\end{table}

\subsubsection{The census manifold \texorpdfstring{$\mathrm{v1539(5,1)}$}{v1539(5,1)}}

This manifold is unique in the orientable closed census for having first Betti number greater than one (in fact, equal to $2$). Unlike our first example, we will use a simpler chain complex
\begin{equation}
\begin{tikzcd}
\ZZ G \arrow[r, "d_3"] & \ZZ G^2 \arrow[r, "d_2"] & \ZZ G^2 \arrow[r, "d_1"] & \ZZ G,
\end{tikzcd}
\end{equation}
with
\begin{equation}\label{eq:v1539-group}
    G\defeq \langle a,b\mid
        b^{-4}a^2(b^{-1}a^{-1})^3 b^{-1} a^2 b^5 a^2 b^{-1} a^{-3} b^4, (ba^{-2} ba^3)^2 b a^{-2} b^{-5} \rangle.
\end{equation}

It turns out that taking $c = 3^2$ is sufficient to obtain exact integer outputs, provided $\mu$ is as large as necessary for the valuations to stop changing: the minimal values of $\mu$ are listed along with the outputs in Table~\ref{tab:v1539-outputs}. In more detail, we first try the vectors $\phi = (1,0), (0,1), (1,1), (1,-1)$ (in a basis dual to the generators $a,b$), which give $8$ points on the boundary of the unit ball (refer to Figure~\ref{fig:v1539-balls}, left). There is still some ambiguity, resolved by the input $\phi = (2,-1)$, which gives the top-left and bottom-right corners of the unit ball.

\begin{table}
    \centering
    \begin{tabular}{crrrrrc}\toprule
        $\phi$ & $\mu$ & {$\delta_0$} & {$\delta_1$} & {$\delta_2$} & {$\delta_3$} & {$-\chi^{(2)}(\til M; \phi)$} \\ \midrule
        $(1,0)$  &  4 & 2 & 10 & 10 & 2 & 2 \\
        $(0,1)$  & 10 & 2 & 12 & 14 & 4 & 2 \\
        $(1,1)$  & 11 & 2 & 18 & 22 & 6 & 4 \\
        $(1,-1)$ & 16 & 2 & 10 & 10 & 2 & 2 \\
        $(2,-1)$ & 18 & 4 & 14 & 12 & 2 & 2 \\
        \bottomrule
    \end{tabular}
    \caption{Computed values of the Thurston norm of various classes with $c = 3^2$. Total running time was under $10$ seconds.}
    \label{tab:v1539-outputs}
\end{table}

\begin{figure}[ht]
    \centering
    \begin{tabular}{cc}
        \raisebox{-0.5\height}{
            \scalebox{1.25}{
            \hspace{-4pt}\includegraphics{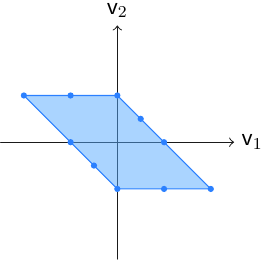}}
        } & 
        \raisebox{-0.5\height}{
            \scalebox{1.25}{
            \hspace{-4pt}\includegraphics{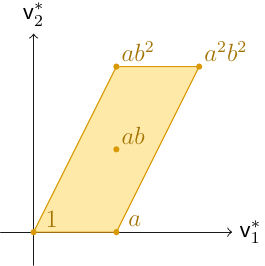}}
        }
    \end{tabular}
    \caption{Left: the Thurston norm unit ball for $\mathrm{v1539(5,1)}$, determined by the $10$ marked points. Right: the Newton polytope for the Alexander polynomial obtained from the group presentation~(\ref{eq:v1539-group}). The two polytopes are dual to each other.}
    \label{fig:v1539-balls}
\end{figure}

For many choices of $c$, even involving higher-class quotients, the group $L$ turns out to be abelian. Fortunately, this is not a problem: by the discussion in~\cite[Example~2.1]{button}, \emph{every} non-vertex cohomology class of $\mathrm{v1539(5,1)}$ is fibered, so the Thurston norm equals the Alexander norm.
Of course, the Alexander polynomial can be easily computed from the group presentation~(\ref{eq:v1539-group}):
\begin{equation}
    \Delta_M = 1+a+ab+ab^2+a^2b^2 \in \ZZ[a^{\pm 1}, b^{\pm 1}].
\end{equation}
We observe that its Newton polytope (Figure~\ref{fig:v1539-balls}, right) is dual to the norm ball we computed, providing further confirmation.

\subsubsection{Manifolds with \texorpdfstring{rank-$1$}{rank-1} first homology}
Whenever the first Betti number of a manifold is $1$, its Thurston norm becomes a simple, integer-valued invariant, requiring in principle no pattern recognition in order to determine unit ball vertices. If we restrict our attention to closed $3$-manifolds, McMullen's inequality gives
\begin{equation}\label{eq:mcmullen-betti1}
    \norm{\phi}_A \le x_M(\phi) + 2,     
\end{equation}
where $\phi$ is a generator of $H^1(M; \ZZ)\simeq \ZZ$.

\begin{table}[p!]
    \centering
    \begin{tabular}{l@{}r} \toprule
    \multicolumn{1}{c}{$M$} & \multicolumn{1}{c}{$x_M$} \\ \midrule
 m160(3,1)          & 2 \\
 m159(4,1)          & 2 \\
 m199(-4,1)         & 2 \\
 m122(-4,1)         & 2 \\
 s942(-2,1)         & 2 \\
 m336(-1,3)         & 2 \\
 m345(1,2)          & 4 \\
 m289(7,1)          & 2 \\
 m280(1,4)          & 2 \\
 m304(-5,1)         & 2 \\
 m305(-1,3)         & 2 \\
 s385(5,1)          & 4 \\
 s296(-1,3)         & 2 \\
 s297(5,1)          & 2 \\
 s912(0,1)          & 2 \\
 m401(-2,3)         & 2 \\
 m371(-1,3)         & 2 \\
 m368(-4,1)         & 2 \\
 s580(-5,1)         & 2 \\
 s581(-1,3)         & 2 \\
 s869(-1,2)         & 2 \\
 s861(3,1)          & 2 \\
 v1191(-5,1)        & 2 \\
 v1076(-5,1)        & 2 \\
 \textbf{s528(-1,3)}        & \textbf{2} \\
 \textbf{s527(-5,1)}        & \textbf{2} \\
 s924(3,1)          & 2 \\
 v1408(4,1)         & 2 \\
 s677(1,3)          & 2 \\
 s676(5,1)          & 2 \\
 v2641(-4,1)        & 2 \\
 s745(3,2)          & 2 \\ \bottomrule
 \end{tabular} \;\;
 \begin{tabular}{l@{}r}\toprule
    \multicolumn{1}{c}{$M$} & \multicolumn{1}{c}{$x_M$} \\ \midrule
 \textbf{s644(-4,3)}        & \textbf{2} \\
 \textbf{s643(-5,1)}        & \textbf{2} \\
 s646(5,2)          & 4 \\
 s789(-5,1)         & 2 \\
 s719(7,1)          & 2 \\
 v1373(-2,3)        & 2 \\
 \textbf{v2018(-4,1)}       & \textbf{2} \\
 v3209(3,1)         & 2 \\
 v2420(-3,1)        & 2 \\
 v2099(-4,1)        & 2 \\
 v2101(3,1)         & 2 \\
 s789(5,1)          & 2 \\
 v1539(-5,1)        & 2 \\
 \textbf{v1436(-5,1)}       & \textbf{2} \\
 v1721(1,4)         & 8 \\
 \textbf{s750(4,3)}         & \textbf{2} \\
 \textbf{s749(5,1)}         & \textbf{2} \\
 \textbf{s789(-5,2)}        & \textbf{2} \\
 \textbf{v1539(5,2)}        & \textbf{2} \\
 \textbf{v2238(-5,1)}       & \textbf{2} \\
 \textbf{v3209(1,2)}        & \textbf{2} \\
 \textbf{s828(-4,3)}        & \textbf{2} \\
 \textbf{v1695(5,1)}        & \textbf{2} \\
 v2771(-4,1)        & 4 \\
 s836(-6,1)         & 6 \\
 v2986(1,2)         & 4 \\
 v2209(2,3)         & 4 \\
 \textbf{s862(7,1)}         & \textbf{2} \\
 \textbf{v2190(4,1)}        & \textbf{2} \\
 v2054(-7,1)        & 2 \\
 v3066(-1,2)        & 4 \\
 v2563(5,1)         & 2 \\\bottomrule
 \end{tabular} \;\;
 \begin{tabular}{l@{}r}\toprule
    \multicolumn{1}{c}{$M$} & \multicolumn{1}{c}{$x_M$} \\ \midrule
 v2345(5,1)         & 2 \\
 v3209(-3,1)        & 2 \\ 
 v3077(5,1)         & 2 \\
 v2959(-3,1)        & 2 \\
 v2671(-2,3)        & 4 \\
 \textbf{v3209(-1,2)}       & \textbf{2} \\
 \textbf{v2593(4,1)}        & \textbf{2} \\
 s928(2,3)          & 6 \\
 v3390(3,1)         & 2 \\
 v3209(4,1)         & 2 \\
 v2913(-3,2)        & 2 \\
 v3505(-3,1)        & 2 \\
 v3261(4,1)         & 2 \\
 v3262(3,1)         & 2 \\
 v2678(-5,1)        & 2 \\
 \textbf{v3209(3,2)}        & \textbf{2} \\
 \textbf{v3027(-3,1)}       & \textbf{2} \\
 \textbf{v2896(-6,1)}       & \textbf{2} \\
 \textbf{v2683(-6,1)}       & \textbf{2} \\
 \textbf{v2796(4,1)}        & \textbf{2} \\
 \textbf{v2797(-3,4)}       & \textbf{2} \\
 v3107(3,2)         & 4 \\
 v3216(4,1)         & 2 \\
 v3217(-1,3)        & 2 \\
 v3320(4,1)         & 4 \\
 v3091(-2,3)        & 4 \\
 \textbf{v2948(-6,1)}       & \textbf{2} \\
 \textbf{v2794(-6,1)}       & \textbf{2} \\
 v3214(1,3)         & 2 \\
 v3215(-4,1)        & 2 \\
 \textbf{v3183(-3,2)}       & \textbf{2} \\
 v3209(-4,1)        & 2 \\\bottomrule
 \end{tabular} \;\;
 \begin{tabular}{l@{}r}\toprule
    \multicolumn{1}{c}{$M$} & \multicolumn{1}{c}{$x_M$} \\ \midrule
 v2984(-1,3)        & 4 \\
 \textbf{v3145(3,2)}        & \textbf{2} \\
 \textbf{v3181(-3,2)}       & \textbf{2} \\ 
 v3209(5,1)         & 2 \\
 v3019(5,2)         & 6 \\
 \textbf{v3036(3,2)}        & \textbf{2} \\
 v3212(1,3)         & 4 \\
 \textbf{v3209(1,3)}        & \textbf{2} \\
 \textbf{v3269(4,1)}        & \textbf{2} \\
 \textbf{v3209(-3,2)}       & \textbf{2} \\
 \textbf{v3209(2,3)}        & \textbf{2} \\
 \textbf{v3313(3,1)}        & \textbf{2} \\
 \textbf{v3239(3,2)}        & \textbf{2} \\
 \textbf{v3209(5,2)}        & \textbf{2} \\
 \textbf{v3209(-1,3)}       & \textbf{2} \\
 v3209(-5,1)        & 2 \\
 v3425(-3,2)        & 4 \\
 v3209(6,1)         & 2 \\
 \textbf{v3209(4,3)}        & \textbf{2} \\
 v3318(4,1)         & 6 \\
 \textbf{v3244(4,3)}        & \textbf{2} \\
 \textbf{v3243(-4,1)}       & \textbf{2} \\
 v3352(1,4)         & 6 \\
 v3398(2,3)         & 4 \\
 v3378(-1,4)        & 6 \\
 v3408(1,3)         & 8 \\
 v3467(-2,3)        & 8 \\
 v3445(6,1)         & 10 \\
 v3509(4,3)         & 4 \\
 v3508(4,1)         & 4 \\
 v3504(-2,3)        & 6 \\ 
 & \\ \bottomrule
    \end{tabular}
    \caption{Thurston norms of closed hyperbolic $3$-manifolds in the census with rank-$1$ first homology. Rows in bold denote non-fibered manifolds~\cite[Table~4]{button}.}
    \label{tab:betti1-thurston-norms}
\end{table}

The hyperbolic $3$-manifold census lists $127$ closed $3$-manifolds with first Betti number equal to $1$, of which $86$ are fibered and $41$ non-fibered~\cite{button}; surprisingly, for all of them, equality holds in~(\ref{eq:mcmullen-betti1}). Indeed, this is part of McMullen's statement for the fibered manifolds, while the rest can all be shown with \textsf{SageMath} to contain appropriate normal surfaces of Euler characteristic $-2$ and to have Alexander polynomials of degree $4$.
Combining this with Proposition~\ref{prop:alex-ab}, we obtain
\begin{equation}\label{eq:betti1-ab-suffices}
    \chi^{(2)}(\til M; \phi) = \chi^{(2)}(\til M; \ab(G), \phi),
\end{equation}
for $M$ any of the $127$ manifolds we are considering.

When we attempt to apply our algorithm to these manifolds, we encounter a limitation of the finite quotient subroutine: nontrivial $p$-group quotients are found only if $H_1(M; \ZZ)$ has $p$-torsion. This appears to be strongly related to $\ker \phi$ having finite, or even trivial, abelianization. However, due to~(\ref{eq:betti1-ab-suffices}), it is not necessary to consider non-abelian quotients: in practice, it turns out that the trivial quotient always suffices, attaining exactly (!) the Thurston norm, if $\mu$ is high enough. The results are listed in Table~\ref{tab:betti1-thurston-norms}.

\subsection{Free-by-cyclic groups}
Another rich class of test cases is given by \emph{free-by-cyclic} groups:

\begin{defin}
    A \emph{free-by-cyclic group} is a semidirect product $F_n \rtimes_\varphi \ZZ$, where $F_n$ is a free group of finite rank $n \ge 0$ and $\varphi$ is an automorphism of $F_n$. Equivalently, it is an extension of a finitely generated free group by the infinite cyclic group $\ZZ$. 
\end{defin}

Every compact $3$-manifold with boundary that fibers over $S^1$ has a free-by-cyclic fundamental group: the fiber is a compact surface with boundary and so has free fundamental group, while $\varphi$ is simply the monodromy of the fibration. Thus, it is natural to consider such groups as generalizations of fibered $3$-manifolds. Following~\cite[Definition~6.50]{l2} and~\cite[Section~4.1]{uni-l2}, we define the twisted $L^2$-Euler characteristic of a group $G$ as that of the universal cover of its classifying space $BG$, if the latter is finite and $L^2$-acyclic.

A free-by-cyclic group $G = F_n \rtimes_\varphi \ZZ$ has a canonical presentation
\begin{equation}
    \langle x_1, \dots, x_n, t \mid tx_i t^{-1} = \varphi(x_i)\ \forall i\in \{1,\dots,n\} \rangle,
\end{equation}
which is \emph{combinatorially aspherical} as defined in~\cite{aspherical-pres}, by the Corollary of Theorem~3.4 in the same article. To obtain a model for $BG$, we further prove that the presentation complex $K$ is aspherical. By~\cite[Proposition~1.3]{aspherical-pres}, it suffices to check that:
\begin{itemize}
    \item[--] no relation $r_i \defeq tx_i t^{-1} \varphi(x_i)^{-1}$ is a proper power;
    \item[--] no two relations are freely conjugate to each other or their inverses.
\end{itemize}
The former follows from the relations being cyclically reduced and containing only one occurrence of the \emph{stable letter} $t$; to prove the latter, note that each relation (and its inverse) has a unique cyclically reduced representative starting with $t$, and that the second letter is $x_i$ for $r_i$ and $x_i^{-1}$ for $r_i^{-1}$.

At this point, it remains to show that our model for the classifying space is $L^2$-acyclic. This can be done by applying either Theorem~1.39 or Theorem~7.2~(5) in~\cite{l2}.
As a bonus, free-by-cyclic groups satisfy the Atiyah Conjecture by~\cite[Theorem~10.22]{l2}, and are residually finite by~\cite{fbc-resfin}.

Constructing test cases becomes a matter of finding free group automorphisms, which can be done by composing elements from a generating set of $\operatorname{Aut}(F_n)$, such as the following involutions inspired by~\cite{aut-fn}:
\begin{itemize}
    \item[--] $\tau_i$, inverting the generator $x_i$;
    \item[--] $\sigma_{i,j}$, swapping $x_i$ and $x_j$;
    \item[--] $\eta_{i,j}$, sending $x_i \mapsto x_j^{-1}x_i$, $x_j \mapsto x_j^{-1}$, and leaving other generators unchanged.
\end{itemize}
In order to determine the invariant $-\chi^{(2)}(G; -)$ in a finite number of evaluations, we exploit the fact that it is a genuine seminorm, proved in~\cite[Corollary~3.5]{fbc-bns}, just like the Thurston norm case. This is not strictly necessary, as the Laplacian degrees are themselves seminorms by Kielak's \emph{single polytope theorem}~\cite[Theorem~3.14]{kielak-bns}.

The first example we consider is the automorphism $\varphi \colon F_3 \to F_3$ given by
\begin{equation}
    \varphi \defeq \eta_{2,1}\cdot\sigma_{1,3}\cdot\eta_{2,1}\cdot\eta_{3,2}\cdot\eta_{3,1},
\end{equation}
where the leftmost factor is applied first. We proceed by constructing the necessary \textsf{GAP} objects. Since $G$ has an aspherical group presentation, we can compute the $\ZZ G$-chain complex directly using the \textsf{HAP} method \textsf{ResolutionAsphericalPresentation}. We choose a basis of $\Hom(G, \ZZ) \simeq \ZZ^2$ such that $\phi = (0,1)$ is the canonical projection on $\ZZ$, while $\phi = (1,0)$ has the stable letter $t$ in its kernel. More precisely, the images of the generators of $G$ are:
\vspace{1.5ex}
\begin{equation}
    \label{tab:fbc-basis-1}
    \begin{tabular}{ccccc} 
        \toprule
        {$\phi$} & {$\phi(x_1)$} & {$\phi(x_2)$} & {$\phi(x_3)$} & {$\phi(t)$}
         \\ \midrule
        $(1,0)$ & 1 & 1 & 1 & 0 \\
        $(0,1)$ & 0 & 0 & 0 & 1 \\
        \bottomrule\vspace{-0.75ex}
    \end{tabular}
\end{equation}
Key values of $\chi^{(2)}(G; -)$ are collected in Table~\ref{tab:fbc-results-1}: the two runs are consistent with each other and with the value $2$ for each value of $\phi$ we considered, determining the square unit ball shown in Figure~\ref{fig:fbc-balls}.

\begin{table}
\begin{center}
\begin{tabular}{lccS[table-format=1.5]@{${}\to{}$}cS[table-format=1.5]@{${}\to{}$}cS[table-format=1.5]@{${}\to{}$}c} \toprule
    \multicolumn{1}{l}{$c$} & $\mu$ & \multicolumn{1}{c}{$\phi$} & \multicolumn{2}{c}{$v_1$} & \multicolumn{2}{c}{$v_2$} & \multicolumn{2}{c}{$\hspace{-0.8em}-\chi^{(2)}(G; \phi)$} \\ \midrule
    \multirow{4}{*}{$2^3$} & \multirow{4}{*}{$4$} &     $(0,1)$ & 0 & 0 & 0 & 0 & 2 & 2 \\
        & & $(1,1)$ & 2.4375 & 2 & 1.57812 & 1 & 1.28125 & 2 \\
        & & $(1,0)$ & 1 & 1 & 1 & 1 & 2 & 2 \\
        & & $(-1,1)$ & 3.40625 & 3 & 1.4375 & 1 & 1.53125 & 2 \\ \midrule
    \multirow{4}{*}{$7^2$} & \multirow{4}{*}{$5$} &     $(0,1)$ & 0 & 0 & 0 & 0 & 2 & 2 \\
        & & $(1,1)$ & 2.07872 & 2 & 1.1516 & 1 & 1.77551 & 2 \\
        & & $(1,0)$ & 1 & 1 & 1 & 1 & 2 & 2 \\
        & & $(-1,1)$ & 3 & 3 & 1 & 1 & 2 & 2 \\
    \bottomrule
\end{tabular}
\end{center}
\caption{Experimental results for the twisted $L^2$-Euler characteristic of the first free-by-cyclic example; for simplicity, we omit the values of $v_0$, as they do not affect the result.
}\label{tab:fbc-results-1}
\end{table}

Another (considerably more complex) rank-$2$ test case arises from the automorphism $\varphi: F_4 \to F_4$, defined by
\begin{align}
\begin{split}
    \varphi \defeq{} &\eta_{2,3}\cdot\sigma_{2,3}\cdot\tau_{4}\cdot\eta_{4,3}\cdot\eta_{3,1}\cdot\tau_{3}\cdot\sigma_{3,4}\cdot\sigma_{2,4}\cdot\eta_{3,1}\cdot\tau_{4}\cdot\eta_{2,1}\cdot\sigma_{1,4}\cdot\eta_{1,4} \\
    {}\cdot{} &\eta_{1,4}\cdot\eta_{3,2}\cdot\sigma_{1,4}\cdot\eta_{2,3}\cdot\sigma_{1,3}\cdot\eta_{3,4}\cdot\eta_{3,1}\cdot\eta_{1,2}\cdot\sigma_{3,4}\cdot\eta_{2,4}\cdot\eta_{1,4}\cdot\eta_{3,2}\cdot\eta_{3,1} \\
    {}\cdot{} &\eta_{4,3}\cdot\eta_{2,3}\cdot\eta_{2,3}\cdot\sigma_{1,4}\cdot\sigma_{3,4}\cdot\tau_{2}\cdot\eta_{1,2}\cdot\eta_{1,2}\cdot\sigma_{3,4}\cdot\sigma_{2,3}\cdot\eta_{4,3}\cdot\eta_{1,2}\cdot\eta_{2,1} \\
    {}\cdot{} &\tau_{2}\cdot\eta_{4,3}\cdot\sigma_{2,4}\cdot\sigma_{1,2}\cdot\sigma_{1,2}\cdot\eta_{3,1}\cdot\eta_{4,1}\cdot\sigma_{1,2}\cdot\eta_{3,4}\cdot\sigma_{1,3}\cdot\eta_{2,4}\cdot\eta_{2,4}\cdot\eta_{1,4}.
\end{split}
\end{align}

We fix a basis given by the following table:
\vspace{1.5ex}
\begin{equation}
    \label{tab:fbc-basis-2}
    \begin{tabular}{cccccc} 
        \toprule
        {$\phi$} & {$\phi(x_1)$} & {$\phi(x_2)$} & {$\phi(x_3)$} & {$\phi(x_4)$} & {$\phi(t)$}
         \\ \midrule
        $(1,0)$ & 1 & -2\phantom{-} & -3\phantom{-} & -1\phantom{-} & 0 \\
        $(0,1)$ & 0 & 0 & 0 & 0 & 1 \\
        \bottomrule\vspace{-0.75ex}
    \end{tabular}
\end{equation}
In this case, the algorithm has more trouble converging: when choosing an odd prime $p$, the quotient routine returns cyclic groups, incurring the risk of not approximating the invariant correctly, like in Proposition~\ref{prop:alex-ab}; on the other hand, even the $2$-quotient from $c = 2^3$ is much too large (order $1024$), producing runtimes on the order of hours.

Therefore, we use the improved heuristic from Subsection~\ref{sec:finquot} to infer results from the smaller quotient with $c = 2^2$; that is, we increase the expansion parameter $\mu$ until $\floor*{v_1}$ and $\floor*{v_2}$ stabilize, and use the latter values to compute Laplacian degrees and the twisted $L^2$-Euler characteristic. We can also check that the quotients $L$ are all non-abelian, of the form $(\ZZ_4 \times \ZZ_2) \rtimes \ZZ_4$ or $\ZZ_2 \times (\ZZ_4 \rtimes \ZZ_4)$.

The ten lattice points of Table~\ref{tab:fbc-results-2} outline a decagonal unit ball (Figure~\ref{fig:fbc-balls}) when appropriately scaled. In some cases the error in the valuations is very close to $1$, indicating that it could be even larger, which in turn would lead to incorrect inferences about $v_1$ and $v_2$. This is somewhat mitigated by the general consistency of the shape and by a cross-check with $c = 3^2$.

\begin{figure}
    \centering
    \begin{tabular}{cc}
        \raisebox{-0.5\height}{
            \scalebox{1}{
            \hspace{-4pt}\includegraphics{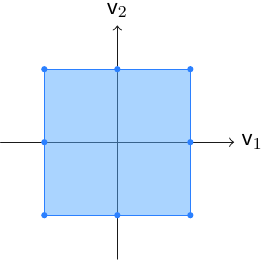}
            }
        } & 
        \raisebox{-0.5\height}{
            \scalebox{1}{
            \hspace{-4pt}\includegraphics{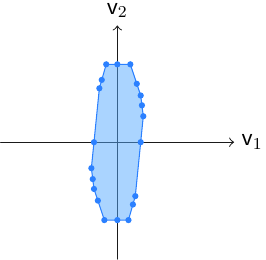}
            }
        }
    \end{tabular}
    \caption{The unit balls of the seminorms $-\chi^{(2)}(G; -)$ for both free-by-cyclic examples, including the boundary points that determine them.}
    \label{fig:fbc-balls}
\end{figure}
\begin{table}
\begin{center}
\begin{tabular}{crS[table-format=2.5]S[table-format=2.5]r} \toprule
    $\phi$ & $\mu$ & \multicolumn{1}{c}{$v_1$} & \multicolumn{1}{c}{$v_2$} & \multicolumn{1}{c}{$\hspace{-0.8em}-\chi^{(2)}(G; \phi)$} \\ \midrule
    $(1,0)$  & $10$ & 30       & 9        & $10$ \\
    $(1,1)$  & $13$ & 32.375   & 10.375   &  $9$ \\
    $(2,3)$  & $23$ & 64.40625 & 19.40625 & $19$ \\
    $(1,2)$  & $11$ & 32       & 9        & $10$ \\
    $(1,3)$  & $13$ & 31.5     & 10.25    & $12$ \\
    $(1,6)$  & $15$ & 24.9375  & 13.9375  & $18$ \\
    $(0,1)$  &  $1$ & 0        & 0        &  $3$ \\
    $(-1,7)$ & $10$ & 7.40625  & 8.40625  & $21$ \\
    $(-1,4)$ &  $9$ & 14       & 17       & $15$ \\
    $(-1,3)$ & $13$ & 18.875   & 20.625   & $13$ \\
    \bottomrule
\end{tabular}
\end{center}
\caption{Results for the second free-by-cyclic example, run with $c = 2^2$. The reported value of $\mu$ is the smallest such that $\lfloor v_1 \rfloor, \lfloor v_2 \rfloor$ coincide for $\mu, \mu+1$; values of $-\chi^{(2)}(G; \phi)$ are computed from $\lfloor v_1 \rfloor, \lfloor v_2 \rfloor$.}\label{tab:fbc-results-2}
\end{table}

In both examples, the class $\phi = (0,1)$ dual to the stable letter may be considered analogous to a fibered class, since it describes the group $G$ as an extension of its finitely generated, free kernel by $\ZZ$, just like in the $3$-manifold case. This can be taken as a definition of fibered class; such matters are discussed in the aforementioned article~\cite{kielak-bns}, which relates them to the \emph{Bieri-Neumann-Strebel} (or \emph{BNS}) \emph{invariant} $\Sigma(G)$. We will limit ourselves to noting that, in both cases, $(0,1)$ lies in the interior of an open cone, and the twisted $L^2$-Euler characteristic agrees with the Euler characteristic of the free group: $\chi(F_n) = 1-n$.

\subsection{The fiber of the Ratcliffe-Tschantz manifold}
In a recent article, Italiano, Martelli and Migliorini~\cite{imm} constructed fibrations over the circle on a few cusped hyperbolic $5$-manifolds of finite volume including, notably, the smallest known hyperbolic $5$-manifold of Ratcliffe and Tschantz $N$. 

Despite the manifold having a relatively simple and concrete combinatorial description, \textsf{GAP} appears to be unable to produce its cellular chain complex, hanging for an extremely long time. Thus, we study its four-dimensional fiber $F$ instead, which is an aspherical manifold of Euler characteristic $1$ admitting no hyperbolic structure.

As always, we construct the $\ZZ G$-chain complex of $\til F$, this time starting from the \textsf{Regina} isomorphism signature given in the paper~\cite[Section~2.3]{imm}, obtaining
\begin{equation}
\begin{tikzcd}
\ZZ G^4 \arrow[r, "d_3"] & \ZZ G^{10} \arrow[r, "d_2"] & \ZZ G^6 \arrow[r, "d_1"] & \ZZ G.
\end{tikzcd}
\end{equation}
Since $G^\mathrm{ab} \simeq \ZZ_4^4$, we expect no nontrivial $p$-quotients for $p \ne 2$: this is supported by a few quick tests. Hence, we try to approximate the von Neumann rank of the three differentials using only $2$-quotients of $G$. In practice, setting $c = 2^2$ in the algorithm provides good results (Table~\ref{tab:fiber}).

\begin{table}
    \centering
    \begin{tabular}{lrS[table-format=4.5]S[table-format=4.5]S[table-format=4.5]}\toprule
        $c$ & {$|L|$} & {$\rk(d_1)$} & {$\rk(d_2)$} & {$\rk(d_3)$}
         \\ \midrule
        $2$ & 16 & 0.9375 & 4.125 & 2.8125 \\
        $2^2$ & 4096 & 0.99976 & 4.98682 & 3.84399 \\ 
        \midrule
        \emph{limit} & {--} & 1 & 5 & 4 \\
        \bottomrule
    \end{tabular}
    \caption{Ranks of the differentials of $C_\blt(\til F)$ with inferred limit values. The computations for $c = 2^2$ took respectively around $500$, $4500$ and $900$ seconds.}
    \label{tab:fiber}
\end{table}
Note that the ranks are \emph{as high as possible}: indeed, $d_1$ and $d_3$ are full rank, while $\rk(d_2)$ is bounded above by $6-\rk(d_1)$. We easily recover the $L^2$-Betti numbers of the fiber via the rank-nullity-type formula
\begin{equation}
    b^{(2)}_i(\til F) = \dim C_i(\til F) - \rk(d_i) - \rk(d_{i+1}),
\end{equation}
obtaining 
\begin{equation}
b^{(2)}_i(\til F) = \begin{cases}
    1 & \text{if $i = 2$,}
\\  0 & \text{otherwise.}
\end{cases}
\end{equation}
This confirms $\chi(F) = 1$ and is reminiscent of the \emph{Singer conjecture} for closed aspherical manifolds:
\begin{conj}[Singer] If $M$ is a closed aspherical $n$-manifold, then all its $L^2$-Betti numbers vanish, except possibly for $b^{(2)}_{n/2}(\til M)$ if $n$ is even.
In that case, if $M$ also admits a metric with negative sectional curvature, then $b^{(2)}_{n/2}(\til M) > 0$.
\end{conj}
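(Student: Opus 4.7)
The plan is to reduce both parts of the conjecture to statements about square-integrable harmonic forms on the universal cover $\til M$. By Dodziuk's theorem, one has
\[
b^{(2)}_i(\til M) = \dim_{\neu(\pi_1 M)} \mathcal H^i_{(2)}(\til M),
\]
where $\mathcal H^i_{(2)}(\til M)$ is the space of closed and coclosed $L^2$-integrable $i$-forms on $\til M$. Asphericity ensures that the $L^2$-de Rham complex of $\til M$ computes the cellular $L^2$-Betti numbers defined earlier, so the conjecture becomes a question about harmonic $L^2$-forms in each degree $i \ne n/2$.

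For the vanishing statement, the natural tool is the Bochner--Weitzenböck identity
\[
\Delta_i = \nabla^* \nabla + R_i,
\]
where $R_i$ is a curvature endomorphism of $i$-forms. The first step would be to produce a uniform spectral gap for $\Delta_i$ on $L^2$-forms of degree $i \ne n/2$, which, combined with Hodge theory on $\til M$, would force $\mathcal H^i_{(2)}(\til M) = 0$. This strategy succeeds under sharp geometric hypotheses: real hyperbolic manifolds (Dodziuk), K\"ahler hyperbolic manifolds (Gromov), and locally symmetric spaces of noncompact type (Borel, Olbrich); in each case one exhibits a pointwise or spectral positivity of $R_i$ away from the middle degree. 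The hard part will be that asphericity alone supplies no differential-geometric information and no Bochner-type argument is available to distinguish the middle dimension, which is precisely why the conjecture remains open in that generality.

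For the positivity claim under strictly negative sectional curvature, I would follow Gromov's approach: exploiting that $\til M$ is Gromov hyperbolic, construct a bounded primitive for the lift of the Riemannian volume form, then pair this primitive against a candidate harmonic representative via an $L^2$-Stokes argument to exhibit a nonzero class in $\mathcal H^{n/2}_{(2)}(\til M)$. The technical heart is to promote a bounded cohomological class to an honest $L^2$-harmonic form, and the main obstacle is once again the reliance on Gromov hyperbolicity for both the bounded primitive and the vanishing of the exterior differential in the $L^2$-sense; without it, no analogous positivity mechanism is known. Accordingly, my plan produces a proof in the cases already covered by the literature, but the purely topological content of Singer's conjecture for general closed aspherical manifolds appears to lie beyond the scope of these methods, and the experimental evidence above (for the Ratcliffe--Tschantz fiber) should be read as computational corroboration rather than part of such a plan.
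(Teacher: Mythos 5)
This statement is labelled \emph{Conjecture} in the paper precisely because it has no proof: the paper states the Singer conjecture, remarks that it holds for closed hyperbolic manifolds by \cite[Theorem~1.62]{l2}, and offers the Ratcliffe--Tschantz fiber computation only as corroborating numerical evidence. There is therefore no ``paper's own proof'' to compare against, and you were right not to manufacture one. Your write-up correctly identifies the translation to $L^2$-harmonic forms via Dodziuk's theorem, the Bochner--Weitzenb\"ock strategy that succeeds under extra curvature or symmetry hypotheses (Dodziuk, Gromov, Borel--Olbrich), Gromov's bounded-primitive argument for the positivity half, and — crucially — the reason none of this closes the gap: asphericity alone carries no differential-geometric information, so no spectral gap or positivity mechanism is forthcoming in degrees away from $n/2$. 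That is an accurate summary of the state of the art and matches the paper's stance that the conjecture is open; the only caution I would add is that a submission should not call this a ``proof proposal'' at all, since what you have written is (correctly) a survey of why a proof is not available, not a plan expected to terminate in one.
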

This conjecture is known to hold for closed hyperbolic manifolds~\cite{hyp-l2-acyclic}.

\begin{remark}
The computation we carried out suggests a broadening of its scope to include $F$ and possibly some other non-compact manifolds. Let $H$ be an aspherical $n$-manifold which is the interior of a compact manifold $\overline H$ with $L^2$-acyclic boundary, as is the case for $F$ (see \cite[Section~3]{imm} and note that the boundary components are flat $3$-manifolds, hence virtually fibered and $L^2$-acyclic). Following~\cite[2565]{okun-schreve}, we use the \emph{Davis reflection group trick} to construct a compact aspherical $n$-manifold $H'$. Moreover, if we assume the Singer conjecture for $H'$ and the \emph{cadim conjecture} in dimension $n-1$ (which can be ensured for $n\le 4$~\cite[Corollary~4.15]{okun-schreve}), we obtain the cadim conjecture for $H$, i.e. $b_i(\til{H}) = 0$ for $i > n/2$. We can strengthen the condition to $i \ne n/2$ as a consequence of Poincaré duality, the $L^2$-homology long exact sequence of $(\overline H, \partial\overline H)$, and $L^2$-acyclicity of $\partial \overline H$; in other words, $H$ satisfies the Singer conjecture.
\end{remark}

\subsubsection{Some musings and conjectures}
A direct consequence of our computation is that the twisted $L^2$-Betti numbers $b^{(2)}_i(\til N; \phi)$ of the Ratcliffe-Tschantz manifold all vanish, except for $b^{(2)}_2(\til N; \phi) = 1$. Of course, a similar argument can be made for any fibered manifold with fiber satisfying the Singer conjecture:
\begin{lemma}
    Assume the Singer conjecture for all even-dimensional closed aspherical manifolds. Let $M$ be a closed aspherical  $(2n+1)$-manifold and let $\phi\colon \pi_1(M) \surj \ZZ$ be a virtually fibered cohomology class (i.e.\ its image in a finite cover represents a fibration over the circle). Then $b_i^{(2)}(\til M; \phi) = 0$ for all $i \ne n$.
\end{lemma}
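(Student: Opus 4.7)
The plan is to reduce the twisted $L^2$-Betti numbers of $\til M$ at $\phi$ to the untwisted $L^2$-Betti numbers of the fiber of a genuine fibration, obtained after passing to a finite cover, and then to invoke the Singer conjecture directly on that fiber.

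First, by hypothesis, I choose a finite cover $\pi\colon M' \to M$ of some degree $d$ such that $\phi' \defeq \phi \circ \pi_*$ is a primitive character realised by a fibration $M' \to S^1$ with fiber $F$. Then $F$ is a closed $2n$-manifold, and the long exact sequence of homotopy groups for $F \hookrightarrow M' \to S^1$, combined with the asphericity of $M'$ and $S^1$, forces $F$ to be aspherical; hence Singer's conjecture gives $b_i^{(2)}(\til F) = 0$ for all $i \ne n$.

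Next I apply Proposition~\ref{prop:twisted-l2}~(4) to $(M', \phi')$: since $\phi'$ is primitive with kernel $K' = \pi_1(F)$, and since the infinite cyclic cover of $M'$ admits a deformation retraction onto $F$ that lifts to a $K'$-equivariant deformation retraction $\til{M'} \to \til F$, this yields
\[
    b_i^{(2)}(\til{M'}; \neu(G'), \phi') = b_i^{(2)}(\til F; \neu(K')) = 0
    \quad \text{for all } i \ne n.
\]

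Finally, I transfer this vanishing back to $M$ via multiplicativity. Writing $K = \ker \phi$ and $G' = \pi_1(M')$, a short diagram chase using that $\phi|_{G'}$ is surjective onto $\ZZ$ shows $[K : K \cap G'] = d$, and of course $K \cap G' = K'$. Since $\til M$ and $\til{M'}$ coincide as topological spaces, multiplicativity of the untwisted $L^2$-Betti numbers under the finite-index inclusion $K' \hookrightarrow K$ gives
\[
    b_i^{(2)}(\til{M'}; \neu(K')) = d \cdot b_i^{(2)}(\til M; \neu(K)).
\]
Combining this with Proposition~\ref{prop:twisted-l2}~(4) applied to $(M, \phi)$ will yield $d \cdot b_i^{(2)}(\til M; \neu(G), \phi) = 0$ for every $i \ne n$, which proves the lemma.

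The only subtleties I foresee are the asphericity of $F$ (an elementary homotopy-group argument) and the index equality $[K : K'] = d$ (a short diagram chase using the surjectivity of $\phi|_{G'}$); everything else is a direct assembly of tools already established in Sections~1 and~2.
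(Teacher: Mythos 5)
Your proof is correct and takes essentially the same approach as the paper: reduce to a finite cover where $\phi$ is genuinely fibered, observe the fiber is aspherical and invoke the Singer conjecture, then transfer back via multiplicativity. The only cosmetic difference is that you justify asphericity of $F$ via the long exact sequence of the fibration while the paper notes that the infinite cyclic cover (homotopy equivalent to $F$) is aspherical because it covers the aspherical $M'$, and that you spell out the transfer to $M$ through untwisted multiplicativity on $K' \le K$ plus two applications of Proposition~\ref{prop:twisted-l2}~(4), whereas the paper compresses this into a one-line appeal to multiplicativity of the twisted Betti numbers.
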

\begin{proof}
    Since twisted $L^2$-Betti numbers are multiplicative with respect to finite covers, we can assume that $\phi$ is a fibered class. The infinite cyclic cover associated to $\phi$ is homotopically equivalent to the fiber, which must then be aspherical. By the Singer conjecture, its (untwisted) $L^2$-Betti numbers vanish in dimension not equal to $n$, and so do the twisted $L^2$-Betti numbers of $M$.
\end{proof}
Recall that fibering is an open condition in cohomology, so that by assuming the existence of a single virtually fibered class, we obtain a whole open cone on which $b^{(2)}_i(\til M; {-}) = 0$ for $i\ne n$.
If we add a tameness condition, such as requiring that the twisted $L^2$-Betti numbers be seminorms in the variable $\phi$, we may formulate the following conjecture:
\begin{conj}\label{conj:thurston5}
    Let $M$ be a closed aspherical $(2n+1)$-manifold. Then $b_i^{(2)}(\til M; {-}) \equiv 0$ for all $i \ne n$. Moreover, $\chi^{(2)}(\til M; {-}) = (-1)^n\cdot b_n^{(2)}(\til M; {-})$ is a seminorm up to sign.
\end{conj}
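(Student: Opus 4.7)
Since the preceding lemma already gives the vanishing of $b_i^{(2)}(\til M; \phi) = 0$ for $i \ne n$ on the open cone of virtually fibered classes (assuming the even-dimensional Singer conjecture), the plan is to upgrade this cone-wise vanishing to a statement on all of $H^1(M;\RR)$, and then deduce the seminorm statement for $\chi^{(2)}$ from the definition.

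First, I would try to reduce to the case where $H^1(M;\ZZ)$ is nontrivial (otherwise both statements hold vacuously with $b_n^{(2)}$ constant and $\chi^{(2)} \equiv 0$, and the original Singer conjecture applies). Then, for each dimension $i \ne n$, I would attempt to show that the function $\phi \mapsto b_i^{(2)}(\til M; \phi)$ is governed by an integral polytope in $\poly_T(\ab(G))$ in the spirit of Section~\ref{sec:polyhom}: the idea is to apply Kielak's single polytope theorem from~\cite{kielak-bns} to each Laplacian, so that each twisted $L^2$-Betti number becomes a difference of polytope thickness functions, and hence a difference of continuous, piecewise-linear seminorms on $H^1(M;\RR)$. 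If two such integer-valued seminorms agree on a nonempty open cone (here the virtually fibered cone, where they both vanish by the lemma), they must agree everywhere by homogeneity and piecewise linearity, yielding $b_i^{(2)}(\til M; {-}) \equiv 0$ for $i \ne n$.

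Once this vanishing is established, the definition of $\chi^{(2)}(\til M; \phi)$ as the alternating sum of the $b_i^{(2)}(\til M; \phi)$ immediately yields $\chi^{(2)}(\til M; {-}) = (-1)^n \cdot b_n^{(2)}(\til M; {-})$. To promote this to the ``seminorm up to sign'' statement, I would use Theorem~\ref{thm:diff-seminorms} together with the vanishing: since the $L^2$-torsion polytope decomposes, under the polytope homomorphism, into pieces corresponding to each Laplacian, and all but the middle one contribute trivial seminorms by the polytopal formulation of the first step, the resulting function $\mathfrak N(P(M;G))$ reduces to a single polytope thickness, hence is a genuine seminorm. The sign $(-1)^n$ is then a bookkeeping matter.

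The main obstacle is clearly step one: extending the vanishing from virtually fibered classes to all of $H^1(M;\RR)$. The lemma's proof crucially relied on $\phi$ corresponding to a fibration in a finite cover, so there is no direct homological argument for a generic $\phi$. The polytopal reduction is therefore the linchpin, and it requires that each $b_i^{(2)}(\til M; {-})$ (not merely $\chi^{(2)}$) arise as the thickness function of a single integral polytope. Kielak's theorem provides this under suitable hypotheses on $G$ (such as being residually finite rationally solvable, or the validity of the Atiyah conjecture), so the conjecture may well need these additional assumptions on $\pi_1(M)$ to be provable by this route; moreover, the whole argument rests on the Singer conjecture in even dimensions, which is itself far from settled, and on the existence of at least one virtually fibered class, which is nontrivial outside dimension~$3$.
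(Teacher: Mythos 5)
This statement is a \emph{conjecture}: the paper does not prove it, and neither does it claim to. The surrounding text gives exactly the heuristic you are attempting to formalize --- the Lemma gives vanishing of $b_i^{(2)}(\til M; {-})$ for $i \ne n$ on the open cone of virtually fibered classes (conditional on the Singer conjecture in even dimensions), and the author speculates that an additional ``tameness'' assumption (namely that each $b_i^{(2)}(\til M; {-})$ is itself a seminorm) would force the vanishing to propagate to all of $H^1(M;\RR)$. You have reproduced that heuristic and then, quite honestly, listed the reasons it is not a proof; but the framing of your submission as a proof proposal obscures the fact that the final claim is simply open.

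Beyond the meta-point, two of the steps you sketch are genuinely problematic even as a conditional argument. First, your assertion that two integer-valued seminorms agreeing on a nonempty open cone must agree everywhere is false: for instance, $\|(x,y)\|_1$ and $\|(x,y)\|_\infty$ agree on the cone $\{|y|<|x|\}$ but differ off it. The correct and useful fact is that a \emph{single} polytope thickness function $\norm{\cdot}_P$ vanishing on an open cone forces $P$ to be a point, hence the function to vanish identically; but this requires knowing $b_i^{(2)}(\til M;{-})$ is a single thickness function and not merely a difference, which is precisely the tameness hypothesis the paper declines to assume and which is not supplied by Kielak's single polytope theorem. Second, the single polytope theorem applies to degrees of Dieudonné determinants of square matrices (e.g.\ the Laplacians $\Delta_i$), and via Proposition~\ref{prop:altsum-lapl2} these assemble into $\chi^{(2)}$; they do not individually realize the separate Betti numbers $b_i^{(2)}(\til M; \phi)$, which are dimensions of homology groups built from non-square differentials. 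Your route therefore needs an additional ingredient to justify even the conditional polytopality of each $b_i^{(2)}$, and a further ingredient to ensure a virtually fibered class exists. As you yourself note, both are nontrivial, which is why this remains a conjecture.
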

Note that the above holds for $n = 0,1$: in the case of the circle, the twisted $L^2$-Betti numbers are just the $L^2$-Betti numbers of the real line (as a $G$-CW complex with trivial $G$), which vanish in dimension other than $0$; as for $3$-manifolds, this follows from~\cite[Theorem~5.5]{l2thur} by taking $\mu = \id_{\pi_1(M)}$. Indeed, the condition $\image(\mu) \cap \ker(\phi) \ne 1$, or equivalently $\pi_1(M) \ne \ZZ$, holds because $M$ is a closed $3$-manifold and a $K(\pi_1(M), 1)$; hence, $H^3(\pi_1(M); \ZZ_2) = \ZZ_2$, while $H^3(\ZZ; \ZZ_2) = H^3(S^1; \ZZ_2) = 0$. Conjecture~\ref{conj:thurston5} would strengthen the link between the Thurston norm and the twisted $L^2$-Euler characteristic. As an example, if the latter invariant has a polytopal unit ball $B$, it becomes natural to ask whether the set of fibered classes is a union of cones over some open faces of $B$, as in the case of $3$-manifolds. 

\subsection{A closed aspherical \texorpdfstring{$5$-manifold}{5-manifold}}
At the time of writing this paper, there is no explicit description of a closed hyperbolic $5$-manifold as a CW complex. Hence, we will settle for an aspherical manifold $M$ defined as the product of two closed hyperbolic manifolds: $M_3 \defeq \mathrm{m160(3,1)}$ and the genus-$2$ surface $S_2$. The first factor is the first entry in Table~\ref{tab:betti1-thurston-norms} and has $S_2$ as a fiber.

By the Künneth formula, the first cohomology of $M$ is the direct sum 
\begin{equation}
    H^1(M; \RR) \simeq H^1(M_3; \RR) \oplus H^1(S_2; \RR) \simeq \RR \oplus \RR^4.
\end{equation}
After fixing a basis for the second factor, we observe that $(1,0,0,0,0)$ is a fibered class, corresponding to the fiber $S_2\times S_2$ of Euler characteristic $4$. Hence,
\begin{equation}\label{eq:m3xs2-fiber}
    \chi^{(2)}(\til M; (t,0,0,0,0)) = 4\abs{t}.
\end{equation}
In fact, the whole function $\chi^{(2)}(\til M; {-})$ can be determined:
\begin{lemma}
    For all $(t,x,y,z,w) \in \ZZ^5$, we have $\chi^{(2)}(\til M; (t,x,y,z,w)) = 4\abs{t}$.
\end{lemma}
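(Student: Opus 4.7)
The plan is to establish a Künneth-type product formula for the twisted $L^2$-Euler characteristic and then evaluate it. Let $G_3 = \pi_1(M_3)$ and $G_2 = \pi_1(S_2)$, so that $G = G_3 \times G_2$. Since $M_3$ is closed hyperbolic of odd dimension it is $L^2$-acyclic, and the Künneth formula for $L^2$-Betti numbers then gives $L^2$-acyclicity of $M$, hence well-definedness of $\chi^{(2)}(\til M;{-})$ on the whole of $H^1(M;\ZZ)$. Writing $\phi = (t,x,y,z,w)$, the restriction $\phi|_{G_3}$ equals $t\cdot\phi_{M_3}^0$, where $\phi_{M_3}^0$ generates $H^1(M_3;\ZZ)\simeq \ZZ$.

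The key step is the product formula
\[
    \chi^{(2)}(\til M;\neu(G),\phi) = \chi(S_2)\cdot\chi^{(2)}(\til M_3;\neu(G_3),\phi|_{G_3}),
\]
which I would derive from a multiplicativity statement for the universal $L^2$-torsion: for a finite $L^2$-acyclic $G_X$-CW complex $X$ and any finite $G_Y$-CW complex $Y$,
\[
    \rho^{(2)}_u(X\times Y) = \chi(Y)\cdot j_*\rho^{(2)}_u(X)\quad\text{in }\mathrm{Wh}^w(\ZZ[G_X\times G_Y]),
\]
where $j\colon G_X\hookrightarrow G_X\times G_Y$. The proof goes by induction on the cells of $Y$: a cell attachment $Y_n = Y_{n-1}\cup_f e^{k_n}$ yields a short based exact sequence of cellular chain complexes whose quotient is (the $\ZZ[G_X\times G_Y]$-induction of) $C_\blt(\til X)$ shifted by $k_n$; additivity of $\rho^{(2)}_u$ together with the shift relation $\rho^{(2)}_u(C_\blt[k]) = (-1)^k\rho^{(2)}_u(C_\blt)$ produces an incremental contribution of $(-1)^{k_n}j_*\rho^{(2)}_u(X)$, and summation over all cells gives $\chi(Y)\cdot j_*\rho^{(2)}_u(X)$. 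Applying $\det$ (Lemma~\ref{lemma:det-k1}) and $\deg_\phi$, and observing that elements of $\linn(G_X)\subseteq \linn(G_X\times G_Y)$ have Newton polytopes lying in $\ab(G_X)\otimes\RR$ (so that $\deg_\phi$ restricted to $\linn(G_X)$ agrees with $\deg_{\phi|_{G_X}}$), one reads off the product formula from (\ref{eq:chi-degdet}).

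With the formula in hand, the final computation is immediate: by Proposition~\ref{prop:thurston-l2} and the value $x_{M_3}=2$ from Table~\ref{tab:betti1-thurston-norms}, $\chi^{(2)}(\til M_3;\phi_{M_3}^0) = -2$; by homogeneity and evenness of the Thurston norm, $\chi^{(2)}(\til M_3;t\phi_{M_3}^0) = -2|t|$ for all $t\in\ZZ$ (which is consistent with Proposition~\ref{prop:twisted-l2}(3) when $t=0$). Combined with $\chi(S_2) = -2$, the product formula yields $\chi^{(2)}(\til M;(t,x,y,z,w)) = (-2)\cdot(-2|t|) = 4|t|$. The hard part is the product formula for $\rho^{(2)}_u$: the cell-attachment induction must be set up carefully in $\mathrm{Wh}^w$ (where classes are defined only up to signed group elements and bases are unordered), one must check that each step does yield a short \emph{based} exact sequence, and the shift relation has to be justified via the Laplacian description of Proposition~\ref{prop:altsum-lapl}.
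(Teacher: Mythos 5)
Your proof is correct in outline but takes a genuinely different route from the paper's. The paper argues geometrically: it shows that every class with $t\ne 0$ is fibered by building a non-vanishing closed $1$-form $\pi_3^*(t\omega)+\pi_2^*(\psi)$, then invokes Thurston's theorem that the fiber Euler characteristic is locally linear near fibered classes, deduces that $\chi^{(2)}(\til M;{-})$ agrees with a single linear functional on the connected set $\{t>0\}$, and pins it down by continuity (Theorem~\ref{thm:diff-seminorms}), evenness, and the single evaluation (\ref{eq:m3xs2-fiber}). Your algebraic route via the product formula $\rho^{(2)}_u(X\times Y)=\chi(Y)\cdot j_*\rho^{(2)}_u(X)$ also works — this multiplicativity of universal $L^2$-torsion is a genuine theorem in Friedl--Lück's paper — and it has the appeal of making the Künneth structure explicit rather than relying on differential-geometric input. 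The trade-off is that the product formula is a substantial piece of machinery not established in this paper, so your proof is less self-contained. One concrete caution about your proposed justification of the shift relation $\rho^{(2)}_u(C_\blt[k])=(-1)^k\rho^{(2)}_u(C_\blt)$ via Proposition~\ref{prop:altsum-lapl}: that proposition only determines $\rho^{(2)}_u+*\rho^{(2)}_u$, not $\rho^{(2)}_u$ itself, and even at the level of the symmetrized quantity, shifting introduces an extra alternating sum $\sum_m(-1)^m[\Delta_m]$ whose vanishing in $\til K_1^w(\ZZ G)$ is not obvious for complexes of length $\ge 3$. The shift relation should instead be argued directly from the definition of $\rho^{(2)}_u$ (or cited from the literature), since the even/odd split swaps under an odd shift in a way that Proposition~\ref{prop:altsum-lapl} does not see.
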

\begin{proof}
    First, we prove that every class $(t,x,y,z,w)$ with $t \ne 0$ is fibered; this is the same as exhibiting a non-vanishing closed $1$-form representing the class. Let $\pi_3\colon M \surj M_3, \pi_2\colon M \surj S_2$ be the natural projections. The $M_3$ factor has a closed non-vanishing $1$-form $\omega$ given by the differential of the fibration, multiplied by $t$. This can be extended to all of $M$ by pulling back, defining a closed $1$-form $\alpha \defeq \pi_3^*(\omega) \in \Omega^1(M)$, which represents the class $(t,0,0,0,0)$.
    
    Now choose $\psi \in \Omega^1(S_2)$ representing $(x,y,z,w)$ and extend it similarly to $\beta \defeq \pi_2^*(\psi)$. Clearly, $\alpha+\beta$ represents $(t,x,y,z,w)$ and is non-vanishing: given a point $(p,q) \in M$, we may choose $v \not \in \ker \omega_p$ as $\omega_p \ne 0$, and then $(v,0) \not \in \ker {(\alpha+\beta)_{(p,q)}}$.

    In a neighborhood of a fibering class, the fiber Euler characteristic is a linear function of the class: this is part of~\cite[Theorem~3]{thurston} and applies to any compact oriented manifold. Therefore, the function $\chi^{(2)}(\til M; {-})$ is locally linear at each point with $t \ne 0$. Since $\{t > 0\}$ is connected, there is a single linear functional $f \colon \RR^5 \to \RR$ equal to $\chi^{(2)}(\til M; {-})$ on the upper half-space. 
    
    As the extension of $\chi^{(2)}(\til M; {-})$ to $\RR^5$ is continuous, being a difference of two seminorms (Theorem~\ref{thm:diff-seminorms}), it must equal $f$ on $\{t = 0\}$; being an even function, this gives $f(0,x,y,z,w) = 0$. Finally, by~(\ref{eq:m3xs2-fiber}), we infer that $f$ is four times the projection on the first coordinate.
\end{proof}

Following Subsection~\ref{sec:nondet}, the product structure of $M$ makes it easier to keep track of the basis in which $\phi$ is expressed. Exploiting this, we ran our algorithm using a $\ZZ G$-chain complex of dimensions $(1,7,16,16,7,1)$, on the class $(1,0,0,0,0)$, with parameters $\mu = 28$, $c = 2$, obtaining exactly the correct result $4$ in under $20$ seconds.

\section{Final remarks}
As we have seen, even if the effective bounds for our algorithm are unusable in practice, we are still able to produce a sequence of approximations that appear to converge at an acceptable rate. In this sense, the algorithm may prove useful as an empirical tool for further research into the twisted $L^2$-Euler characteristic. In fact, the latter is arguably the most natural candidate for the problem of extending the Thurston norm construction to higher-dimensional spaces, due to its behavior with respect to finite coverings and fibrations over the circle. Ironically, however, there seems to be a glaring contrast between the concrete and geometric \emph{minimal complexity} definition of the Thurston norm and the abstract objects involved in $L^2$-invariant theory.

While computational experiments may prove difficult in high dimension due to the sheer combinatorial complexity of the spaces involved, dimension $5$ is probably still accessible, even more so with an optimized version of the algorithm. In particular, if $M$ is a closed aspherical $5$-manifold, we expect results consistent with $\chi^{(2)}(\til M; {-})$ being a seminorm, as in Conjecture~\ref{conj:thurston5}, bridging the gap with the Thurston norm.

Of course, as noted above, more research is also needed on the theoretical front: a geometric interpretation of the twisted $L^2$-Euler characteristic, possibly in a framework that generalizes the Thurston norm, would be incredibly illuminating on these matters.

The Singer conjecture, from which we took inspiration for Conjecture~\ref{conj:thurston5}, might play an important role, as it determines the sign of even-dimensional closed aspherical manifolds, and this sign is inherited by the twisted $L^2$-Euler characteristic of virtually fibered classes in odd-dimensional closed aspherical manifolds. This ``sign constancy'' is exploited in the definition of the Thurston norm by considering only surfaces of non-positive characteristic, which are exactly the aspherical ones.

While our ``algebraic'' approach allows for relative ease of computation, relying on well-established computational homology methods, such a geometric description could produce a more efficient, direct algorithm for the computation of this invariant, of a combinatorial nature akin to normal surface methods in $3$- and $4$-manifold theory. This would, in turn, enable more experimentation and a better understanding of the twisted $L^2$-Euler characteristic.

\clearpage

\appendix
\section{The Alexander norm and the \texorpdfstring{$\ZZ[\ab(G)]$-chain}{Z[ab(G)]-chain} complex}
\label{sec:alex-ab-proof}

In this section we give an overview of the relationship between the Alexander norm and the twisted $L^2$-Euler characteristic of the $\ZZ[\ab(G)]$-chain complex $C_\blt^\mathrm{ab}$ of a compact orientable $3$-manifold, ultimately proving Proposition~\ref{prop:alex-ab}. We assume that the boundary of $M$ is a union of zero or more tori. We refer to McMullen's article~\cite{mcmullen} for an overview of the Alexander norm, recalling from it the definition of the quantity
\begin{equation}
    p \defeq
    \begin{cases}
        0 & \text{if $b_1(M) = 1$,} \\
        1 & \text{if $b_1(M) \ge 2$ and $\partial M \ne \varnothing$,} \\
        2 & \text{if $b_1(M) \ge 2$ and $\partial M = \varnothing$.} \\
    \end{cases}
\end{equation}

By the proof of~\cite[Theorem~5.1]{mcmullen}, the Alexander module of $G$ has a presentation as the cokernel of the second differential $d_2$ of $C_\blt^\mathrm{ab}$. Let $r$ be the rank of $G$. There are a few cases, mirroring the proof we just mentioned.

If $M$ is closed, we can arrange for $C_\blt^\mathrm{ab}$ to be of the form
\begin{equation}\label{eq:alex-zabg-complex}
\begin{tikzcd}
\ZZ[\ab(G)] \arrow[r, "d_3"] & \ZZ[\ab(G)]^n \arrow[r, "d_2"] & \ZZ[\ab(G)]^n \arrow[r, "d_1"] & \ZZ[\ab(G)],
\end{tikzcd}
\end{equation}
where $d_1^T = d_3 = \begin{bmatrix}
    1-g_1 & 1-g_2 & \dots & 1-g_n
\end{bmatrix}$, $\{g_1, \dots, g_{r}\}$ is any basis of $\ab(G)$ and $g_{r + 1} = \dots = g_n = 1$. Since $\phi \colon \ab(G) \surj \ZZ$ has a section, we can choose a basis where $\phi(g_1) = 1$ and $g_2, \dots, g_n \in \ker \phi$; we shall write $t = g_1$.

Moreover, let $d_2'$ be $d_2$ without its first column and $d_2''$ be $d_2$ without its first column and row. Following McMullen's proof, if the rank of $G$ is at least $2$, we have that the multivariate Alexander polynomial $\Delta$ satisfies $\det d_2'' = \pm (1-t)^2 \Delta \ne 0$, so that
\begin{equation}
    \deg_\phi \det d_2'' = 2 + \deg_\phi \det \Delta = 2 + \norm{\phi}_A.
\end{equation}
If instead $r \le 1$, then only the $(1,1)$ minor is nonzero and $\det d_2'' = \pm \Delta$; that is to say, $\deg_\phi \det d_2'' = \norm{\phi}_A$. Combining both subcases, we have
\begin{equation}
    \deg_\phi \det d_2'' = \norm{\phi}_A + p.
\end{equation}

As for the twisted $L^2$-Euler characteristic of $C_\blt^\mathrm{ab}$, 
we refer to the proof of~\cite[Theorem~5.1]{l2thur}, where a chain complex such as (\ref{eq:alex-zabg-complex}) is decomposed using two short exact sequences. First, we quotient $C_\blt^\mathrm{ab}$ by the image of the inclusion of
\begin{equation}
    \mathllap{A_\blt:}\qquad \begin{tikzcd}
\ZZ[\ab(G)] \arrow[r, "1-g_1"] & \ZZ[\ab(G)] \arrow[r] & 0 \arrow[r] & 0,
\end{tikzcd}
\end{equation}
obtaining a complex
\begin{equation}
\begin{tikzcd}
0 \arrow[r] & \ZZ[\ab(G)]^{n-1} \arrow[r, "d_2'"] & \ZZ[\ab(G)]^n \arrow[r, "d_1"] & \ZZ[\ab(G)],
\end{tikzcd}
\end{equation}
where $d_2'$ is $d_2$ without its first column. This complex has a natural projection onto
\begin{equation}
    \mathllap{B_\blt:}\qquad \begin{tikzcd}
 0 \arrow[r] & 0 \arrow[r] & \ZZ[\ab(G)] \arrow[r, "1-g_1"] & \ZZ[\ab(G)],
\end{tikzcd}
\end{equation}
with kernel
\begin{equation}
\begin{tikzcd}
\mathllap{D_\blt:}\qquad 0 \arrow[r] & \ZZ[\ab(G)]^{n-1} \arrow[r, "d_2''"] & \ZZ[\ab(G)]^{n-1} \arrow[r] & 0,
\end{tikzcd}
\end{equation}
where $d_2''$ is $d_2$ without its first column and row. Note that $A_\blt, B_\blt, D_\blt$ are all $L^2$-acyclic, as their non-trivial differentials have non-zero determinant.

Since the universal $L^2$-torsion is additive along based short exact sequences (Theorem~\ref{thm:univ-l2-univ}), we have
\begin{align}
    \chi^{(2)}(\til M; \ab(G), \phi) 
    &= \chi^{(2)}(A_\blt) + \chi^{(2)}(B_\blt) + \chi^{(2)}(D_\blt)
\\ \label{eq:chi-ab-decomp}    &= \deg_\phi \det [1-g_1] - \deg_\phi \det d_2'' + \deg_\phi \det [1-g_1].
\end{align}
The second equality follows from the discussion around~(\ref{eq:chi-el}), with signs added in order to account for the complexes being shifted to the left.

Clearly, the first and last terms in~(\ref{eq:chi-ab-decomp}) are both $1$, while the middle term was computed above; summing up the closed case, we get
\begin{equation}
    \chi^{(2)}(\til M; \ab(G), \phi) = -\norm{\phi}_A + 2-p.
\end{equation}

A similar argument (with only one short exact sequence) shows that if $\partial M \ne \emptyset$, then $\chi^{(2)}(\til M; \ab(G), \phi) = -\norm{\phi}_A + 1-p$. Both cases can be condensed into
\begin{equation}\label{eq:alex-ab-assume}
    \chi^{(2)}(\til M; \ab(G), \phi) = -\norm{\phi}_A +
    \begin{cases}
        0 & \text{if $b_1(M) \ge 2$,} \\
        1 & \text{if $b_1(M) = 1$ and $\partial M \ne \varnothing$,} \\
        2 & \text{if $b_1(M) = 1$ and $\partial M = \varnothing$.} \\
    \end{cases}\vspace{-1.5ex}
\end{equation}
\hfill $\square$

\clearpage

\setlength\bibitemsep{1.5ex}
\printbibliography[heading=bibintoc, title={References}]
\end{document}